\numberwithin{equation}{section}
\theoremstyle{plain}
\newtheorem{thm}{Theorem}[section]
\def\SBM{\mbox{SBM}}
\begin{document}

\title{Entrywise Eigenvector Analysis of Random Matrices with Low Expected Rank}

\author{Emmanuel Abbe\thanks{Address: PACM and Department of EE, Princeton University, Princeton, NJ 08544, USA; E-mail: eabbe@princeton.edu. The research was supported by NSF CAREER Award CCF-1552131, ARO grant W911NF-16-1-0051, NSF CSOI CCF-0939370.}, 
	Jianqing Fan\thanks{Address: Department of ORFE, Sherrerd Hall, Princeton University, Princeton, NJ 08544, USA; E-mails: 
	\textit{\{jqfan, kaizheng, yiqiaoz\}@princeton.edu}.
	The research was supported by NSF grants DMS-1662139 and DMS-1712591, NIH grant R01-GM072611-11 and ONR grant N00014-19-1-2120.}, 
	Kaizheng Wang$^{\dag}$ and Yiqiao Zhong$^{\dag}$}
\date{}

\maketitle
\begin{abstract}
Recovering low-rank structures via eigenvector perturbation analysis is a common problem in statistical machine learning, such as in factor analysis, community detection, ranking, matrix completion, among others.  While a large variety of bounds are available for average errors between empirical and population statistics of eigenvectors, few results are tight for entrywise analyses, which are critical for a number of problems such as community detection.

This paper investigates entrywise behaviors of eigenvectors for a large class of random matrices whose expectations are low-rank, which helps settle the conjecture in \cite{abh_arxiv} that the spectral algorithm achieves exact recovery in the stochastic block model without any trimming or cleaning steps. The key is a first-order approximation of eigenvectors under the $\ell_\infty$ norm:
$$u_k \approx \frac{A u_k^*}{\lambda_k^*},$$
where $\{u_k\}$ and $\{u_k^*\}$ are eigenvectors of a random matrix $A$ and its expectation $\E A$, respectively. The fact that the approximation is both tight and linear in $A$ facilitates sharp comparisons between $u_k$ and $u_k^*$. In particular, it allows for comparing the signs of $u_k$ and $u_k^*$ even if $\| u_k - u_k^*\|_{\infty}$ is large. The results are further extended to perturbations of eigenspaces, yielding new $\ell_\infty$-type bounds for synchronization ($\mathbb{Z}_2$-spiked Wigner model) and noisy matrix completion.

\end{abstract}

\sloppy
\noindent {\it Keywords}: Eigenvector perturbation, spectral analysis, synchronization, community detection, matrix completion, low-rank structures, random matrices.

\section{Introduction} \label{sec:intro}

Many estimation problems in statistics involve low-rank matrix estimators that are NP-hard to compute, and many of these estimators are solutions to nonconvex programs. This is partly because of the widespread use of maximum likelihood estimation (MLE) which, while enjoying good statistical properties, often poses computational challenges due to nonconvex or discrete constraints inherent in the problems. 

Fortunately, computationally efficient algorithms using eigenvectors often afford good performance. The eigenvectors either directly lead to final estimates \citep{SMa00, NJW02}, or serve as warm starts followed by further refinements \citep{KesMonOh10, JaiNetSan13, CanLiSol15}. Such algorithms mostly rely on computation of leading eigenvectors and matrix-vector multiplications, which are easily implemented.

While various heuristics abound, theoretical understanding remains scarce on the entrywise analysis, and on when refinements are needed or can be avoided. In particular, it remains open in various cases to determine whether a vanilla eigenvector-based method without preprocessing steps (e.g., trimming of outliers) or without refinement steps (e.g., cleaning with local improvements) enjoys the same optimality results as the MLE (or SDP) does. A crucial missing step is a sharp entrywise perturbation analysis of eigenvectors. This is party because the $\ell_\infty$ distance between the eigenvectors of a random matrix and their expected counterparts may not be the correct quantity to look at; errors per entry can be asymmetrically distributed, as we shall see in this paper.

This paper investigates entrywise behaviors of eigenvectors and more generally, eigenspaces, for random matrices with low expected rank using the following approach.
Let $A$ be a random matrix, $A^*=\E A$, and $E=A-A^*$ be the `error' of $A$.
In many cases, $A^*$ is a symmetric matrix with low rank determined by the structure of a statistical problem,
such as low-rank with blocks in community detection.

Consider for now the case of symmetric $A$, and let $u_k$, resp.\ $u_k^*$, be the eigenvector corresponding to the $k$-th largest eigenvalue of $A$, resp.\ $A^*$. Roughly speaking, if $E$ is moderate, our first-order approximation reads
\begin{equation*}
u_k = \frac{A u_k}{\lambda_k} \approx \frac{A u_k^*}{\lambda_k^*} = u_k^* + \frac{E u_k^*}{\lambda_k^*}.
\end{equation*}
While $u_k$ is a nonlinear function of $A$ (or equivalently $E$), the approximation is linear in $A$, which greatly facilitates the analysis. Under certain conditions, the maximum entrywise approximation error $ \| u_k - A u_k^* / \lambda_k^* \|_{\infty} $ can be much smaller than $ \| u^*_k\|_{\infty}$, allowing us to study $u_k$ through $A u_k^* / \lambda_k^*$. To obtain such results, a key part in our theory is to characterize concentration properties of $A$ and structural assumptions on its expectation $A^*$.

This perturbation analysis leads to new and sharp theoretical guarantees. In particular, we find that for the exact recovery problem in stochastic block model, the vanilla spectral algorithm (without trimming or cleaning) achieves the information-theoretic limit, and it coincides with the MLE estimator {\it whenever} the latter succeeds.
This settles in particular a conjecture left open in \cite{abh_arxiv,ABH16}.
Therefore, MLE and SDP do not have advantage over the spectral method in terms of exact recovery, if the model is correct. SDP may be preferred in some applications for its robustness and optimality certificates, but that is beyond the scope of this paper.

\subsection{A sample problem}\label{sec:sample}
Let us consider a network model that has received widespread interest in recent years: the stochastic block model (SBM). Suppose that we have a graph with vertex set $\{1,2,\cdots,n\}$, and assume for simplicity that $n$ is even. There is an unknown index set $J \in \{1,2,\cdots,n\}$ with $|J| = n/2$ such that the vertex set is partitioned into two groups $J$ and $J^c$. Within groups, there is an edge between each pair of vertices with probability $p$, and between groups, there is an edge with probability $q$. Let $x \in \R^n$ be the group membership vector with $x_i=1$ if $i\in J$ and $x_i = -1$ otherwise. The goal is to recover $x$ from the observed edges of the graph.

This random-graph-based model was first proposed for social relationship networks \citep{HLL83}, and many more realistic models have been developed based on the SBM since then. Given its fundamental importance, there are a plurality of papers addressing statistical properties and algorithmic efficiencies; 
see \cite{Abb17} for a survey.

Under the regime $p=\frac{a \log n}{n}$, $q = \frac{b \log n}{n}$ where $a > b > 0$ are constants, \cite{ABH16} and \cite{mossel_consist} proved that exact recovery is possible if and only if $\sqrt{a} - \sqrt{b} > \sqrt{2}$, and that the limit can be achieved by efficient algorithms. They used two-round procedures (with a clean-up phase) to achieve the threshold.  Semidefinite relaxations are also known to achieve the threshold \citep{ABH16,HWX16,afonson,afonso_single}, as well as spectral methods with local refinements \citep{ASa15,prout2,Gao15}. We will discuss more in Sections \ref{sec:related} and \ref{sec:sbm}.

While existing works tackle exact recovery rather successfully, some fundamental questions remain unsolved: how do the simple statistics---top eigenvectors of the adjacency matrix---behave? Are they informative enough to reveal the group structure under very challenging regimes?

\begin{figure}[h!]
	\centering
	\includegraphics[scale=0.425]{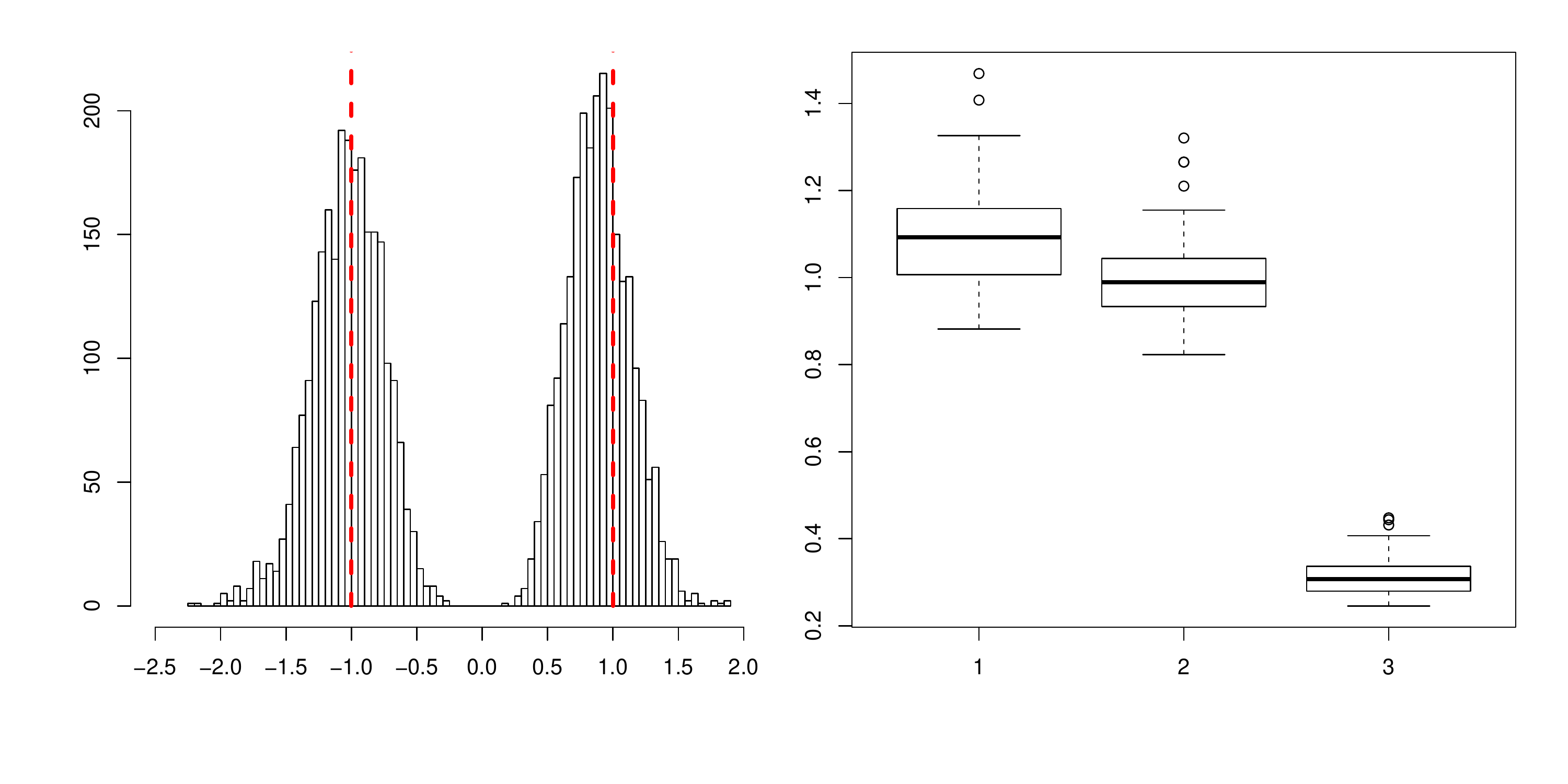}
	\caption{The second eigenvector and its first-order approximation in SBM. \textbf{Left:} The histogram of coordinates of $\sqrt{n} \,u_2$ computed from a single realization of adjacency matrix $A$, where $n$ is $5000$, $a$ is $4.5$ and $b$ is $0.25$. Exact recovery is expected as coordinates form two well-separated clusters. \textbf{Right:} boxplots showing three different distance/errors (up to sign) over $100$ realizations: (i) $ \sqrt{n}\,  \| u_2 - u_2^* \|_{\infty}$, (ii) $\sqrt{n}\,  \| Au_2^* / \lambda_2^* - u_2^*  \|_{\infty}$, (iii) $\sqrt{n}\,  \|   u_2 - Au_2^* / \lambda_2^*  \|_{\infty}$. $Au_2^* / \lambda_2^*$ is a good approximation of $u_2$ under $\ell_{\infty}$ norm even though $\| u_2 - u_2^* \|_{\infty}$ may be large.  }\label{fig:intro}
\end{figure}

To study these questions, we start with the eigenvectors of $A^* = \E A$. By definition, $A_{ij}$ is a Bernoulli random variable, and $\P(A_{ij} = 1)$ depends on whenever $i$ and $j$ are from the same groups. The expectation $\E A$ must be a block matrix of the following form:
\begin{equation*}
\E A =  \frac{ \log n}{n}
\left(
\begin{array}{cc}
a \mathbf{1}_{ \frac{n}{2} \times \frac{n}{2}} & b  \mathbf{1}_{ \frac{n}{2} \times \frac{n}{2}} \\
b \mathbf{1}_{ \frac{n}{2} \times \frac{n}{2}} & a  \mathbf{1}_{ \frac{n}{2} \times \frac{n}{2}}
\end{array}
\right),
\end{equation*}
where $\mathbf{1}_{m\times m}$ is the $m\times m$ all-one matrix. Here, for convenience, we represent $\E A$ as if $J = \{1,2,\cdots,n/2\}$. But in general $J$ is unknown, and there is a permutation of indices $\{1,\cdots,n\}$ in the matrix representation.

From the matrix representation it is clear that $\E A$ has rank $2$, with two nonzero eigenvalues $\lambda_1^* = \frac{a+b}{2}\log n$ and $\lambda_2^* = \frac{a-b}{2}\log n$. Simple calculations give the corresponding (normalized) eigenvectors: $u_1^* = \frac{1}{\sqrt{n}} \mathbf{1}_n$, and $( u_2^* )_i = 1/\sqrt{n}$ if $i \in J$ and $( u_2^*)_i = -1/\sqrt{n}$ if $i \in J^c$. Since $u_2^*$ perfectly aligns with the group assignment vector $x$, we hope to show its counterpart $u_2$, i.e., the second eigenvector of $A$, also has desirable properties.

The first reassuring fact is that, the top eigenvalues preserve proper ordering: by Weyl's inequality, the deviation of any eigenvalue $\lambda_i$ ($i \in [n]$) from $\lambda_i^*$ is bounded by $\| A - A^*\|_2$, which is $O(\sqrt{\log n})$ with high probability; see supplementary materials \citep{supp}. The Davis-Kahan $\sin\Theta$ theorem asserts that $u_1$ and $u_2$ are weakly consistent estimators for $u_1^*$ and $u_2^*$ respectively, in the sense that $|\langle u_k, u_k^*\rangle | \xrightarrow{\P } 1$ for $k=1,2$. However, this is not helpful for understanding their entrywise behaviors in the uniform sense, which is crucial for exact recovery. Nor can it explain the sharp phase transition phenomenon. This makes entrywise analysis both interesting and challenging.

This problem motivates some simulations about the coordinates of top eigenvectors of $A$. In Figure~\ref{fig:intro}, we calculate the rescaled second eigenvector $\sqrt{n} u_2$ of one typical realization $A$, and make a histogram plot of its coordinates. (Note the first eigenvector is aligned with the all-one vector $\mathbf{1}_n$, which is uninformative.) The parameters we choose are $n=5000$, $a=4.5$ and $b=0.25$, for which exact recovery is possible with high probability. Visibly, the coordinates of $\sqrt{n}u_2$ form two clusters around $\pm 1$ which, marked by red dashed lines, are coordinates of $\sqrt{n}\, u_2^*$. Intuitively, the signs of the former should suffice to reveal the group structure.

To probe into the second eigenvector $u_2$, we expand the perturbation $u_2 - u_2^*$ as follows:
\begin{equation}
u_2 - u_2^* = \left(  \frac{Au_2^*}{\lambda_2^*} - u_2^*  \right) + \left(  u_2 - \frac{Au_2^*}{\lambda_2^*}  \right). \label{all-errors}
\end{equation}
The first term is exactly $E u_2^*/\lambda_2^*$, which is linear in $E$ and can be viewed as the first-order perturbation. The second term is nonlinear in general, representing the error of higher order. Figure~\ref{fig:intro} shows boxplots of the infinity norm of rescaled perturbation errors over $100$ realizations (see (i)-(iii)), which illustrates that $\| u_2 - Au_2^* / \lambda_2^*  \|_{\infty}$ is much smaller than $\| u_2 - u_2^*  \|_{\infty}$ and $\| Au_2^* / \lambda_2^* - u_2^*  \|_{\infty}$. Indeed, we will see in Theorem~\ref{thm:simple} that
\begin{equation}\label{ineq:1}
\left\| u_2 - Au_2^* / \lambda_2^*  \right\|_{\infty} = o_{\P} \left(
\min_i |(u_2^*)_i| \right)= o_{\P} \left(1/\sqrt{n} \right).
\end{equation}
The result holds `up to sign', i.e. can choose an appropriate sign for the eigenvector $u_2$ as it is not uniquely defined; see Theorem~\ref{thm:simple} for its precise meaning. Therefore, the entrywise behavior of $u_2 - u_2^*$ is captured by its first-order term, which is much more amenable to analysis. This observation will finally lead to sharp eigenvector results in Section~\ref{sec:sbm}.

We remark that it is also possible to study the top eigenvector (denoted as $\bar{u}$) of the centered adjacency matrix $\bar{A} = A - \frac{\hat{d}}{n} \mathbf{1}_n \mathbf{1}_n^T$, where $\hat{d} = \sum_{i,j} A_{ij} / n$ is the average degree of all vertices. The top eigenvector of $\E \bar{A}$ is exactly $u_2^*$, and its empirical counterpart $\bar{u}$ is very similar to $u_2$. In fact, the same reasoning and analysis applies to $\bar{u}$, and one obtains similar plots as Figure~\ref{fig:intro} (omitted here).

\subsection{First-order approximation of eigenvectors}\label{sec:eigenvec}

Now we present a simpler version of our result that justifies the intuitions above. Consider a general symmetric random matrix (more precisely, this should be a sequence of random matrices with growing dimensions) $A \in \R^{n \times n}$
with independent entries on and above its diagonal. Suppose its expectation $A^* = \E A \in \R^{n \times n}$ is low-rank and has $r$ nonzero eigenvalues. Let us assume that
\begin{itemize}
\item[]
\textbf{(a)}\label{(a)}
 $r = O(1)$, these $r$ eigenvalues are positive and in descending order ($\lambda_1^* \ge \lambda_2^* \ge \cdots \ge \lambda_r^* > 0$), and $\lambda_1^* \asymp \lambda_r^*$.
\end{itemize}
Their corresponding eigenvectors are denoted by $u_1^*,\cdots, u_r^* \in \R^n$. In other words, we have spectral decomposition $A^* = \sum_{j=1}^r \lambda_j^* u_j^* (u_j^*)^T$.

We fix $k\in[r]$ and study the $k$-th eigenvector $u_k$. Define the eigen-gap (or spectral gap) as $\Delta^* = \min\{ \lambda_{k-1}^* - \lambda_{k}^*, \lambda_{k}^* - \lambda_{k+1}^*\}$, where we adopt the convention $\lambda_{0}^* = +\infty$ and $\lambda_{n+1}^* = -\infty$. Assume that
\begin{itemize}
	\item[]
\textbf{(b)}\label{(b)} $A$ \textit{concentrates} under the spectral norm, i.e., there is a suitable $\gamma = \gamma_n = o(1)$ such that $\| A - A^* \|_2 \le \gamma \Delta^*$ holds with probability $ 1-o(1)$.
\end{itemize}
A direct yet important implication is that, the fluctuation of $\lambda_k$ is much smaller than the gap $\Delta^*$, since Weyl's inequality forces $| \lambda_k - \lambda_k^* | \le \| A - A^* \|_2$. Thus, $\lambda_k$ is well separated from other eigenvalues, including the `bulk' $n-r$ eigenvalues whose magnitudes are at most $\| E \|_2$.

In addition, we assume that $A$ \textit{concentrates in a row-wise sense}:
\begin{itemize}
	\item[]
\textbf{(c)}\label{(c)} there exists a continuous non-decreasing function $\varphi: \R_+ \to \R_+$ that possibly depends on $n$, such that $\varphi(0)=0$, $\varphi(x)/x$ is non-increasing, and that for any $m \in [n], w \in \R^n$, with probability $1-o(n^{-1})$,
\begin{equation*}
|(A-A^*)_{m\cdot} w| \le \Delta^* \| w \|_{\infty}  \,
\varphi  \Big( \frac{\|w\|_2}{\sqrt{n} \| w \|_{\infty} } \Big) .
\end{equation*}
\end{itemize}
Here, the notation $(A-A^*)_{m\cdot}$ means the $m$-th row vector of $A-A^*$.

For the Gaussian case where $A_{ij} \sim N(A_{ij}^*,\sigma^2)$, we can simply choose a linear function $\varphi(x) = c(\Delta^*)^{-1}\sigma \sqrt{n \log n}\,x$ where $c>0$ is some proper constant. The condition then reads
\begin{equation*}
\P \left( |(A-A^*)_{m\cdot} w| \le  c \sigma \sqrt{\log n} \|w\|_2 \right) = 1 - o(n^{-1}),
\end{equation*}
which directly follows from Gaussian tail bound since $(A-A^*)_{m\cdot} w \sim N(0 , \sigma^2 \|w\|_2^2 )$. The tail of $(A-A^*)_{m\cdot} w$ is completely determined by $\|w\|_2$. For Bernoulli variables, we will use Bernstein-type inequalities to study $(A-A^*)_{m\cdot} w$, which will inevitably involve both $\|w\|_2$ and $\|w\|_{\infty}$. Hence the function $\varphi(x)$ can no longer be linear. It turns out that $\varphi(x) \propto (1 \vee \log(1/x))^{-1}$, shown in Figure~\ref{fig:phi}, is a suitable choice. More details can be found in Section~\ref{regularity-conditions} and the supplementary material \cite{supp}. In both cases we have $\varphi(1)=O(1)$ under suitable signal-to-noise conditions.

\begin{figure}[h]
\centering
\begin{tikzpicture}
      \draw[->] (0,0) -- (5,0) node[right] {$x$};
      \draw[->] (0,0) -- (0,3) node[above] {$\varphi(x)$};
      \draw[scale=5, thick, domain=0:1,smooth,variable=\x,blue] plot ({\x},{0.6*\x}) node[right, black] { \footnotesize Gaussian};
      \draw[scale=5, thick, domain=0.0001:0.3679,smooth,variable=\y,blue]  plot ({\y},{0.5/ln(1/\y)}) node[above, black] { \footnotesize Bernoulli};
      \draw[scale=5, thick, domain=0:0.0001,smooth,variable=\y,blue]  plot ({\y},{542.8681*\y});
      \draw[scale=5, thick, domain=0.3679:1,smooth,variable=\y,blue]  plot ({\y},{0.5});
    \end{tikzpicture}
\caption{Typical choices of $\varphi$ for Gaussian noise and Bernoulli noise.}\label{fig:phi}
\end{figure}
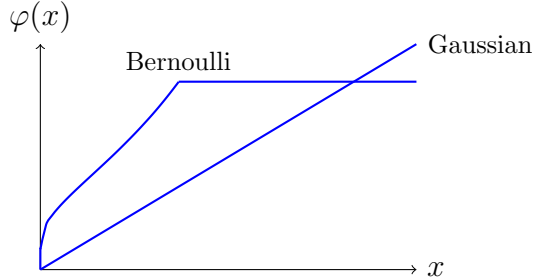


\begin{thm}[Simpler form of Theorem \ref{thm-main}] \label{thm:simple}
Let $k \in [r] = \{1,2,\cdots, r\}$ be fixed. Suppose that Assumptions (\hyperref[(a)]{a}), (\hyperref[(b)]{b}) and (\hyperref[(c)]{c}) hold, and $\| u_k^* \|_{\infty} \le \gamma$. Then, with probability $1-o(1)$,
\begin{equation}\label{ineq:2}
\min_{s \in \{ \pm 1 \}} \|u_k - s Au_k^*/\lambda_k^* \|_{\infty} = O\big( (\gamma+\varphi(\gamma)) \| u_k^*\|_{\infty}  \big) = o\big( \| u_k^*\|_{\infty}\big),
\end{equation}
where the notations $O(\cdot)$ and $o(\cdot)$ hide dependencies on $\varphi(1)$.
\end{thm}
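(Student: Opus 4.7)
The plan is to expand $u_k$ via the eigenvalue equation $\lambda_k u_k = (A^*+E)u_k$ and estimate each resulting piece entrywise, handling the dependence between $u_k$ and $E$ through a leave-one-out device. After choosing $s\in\{\pm 1\}$ so that $\langle u_k, s u_k^*\rangle\ge 0$, Davis--Kahan combined with hypothesis (b) gives $\|u_k - s u_k^*\|_2\lesssim\gamma$; I take $s=1$ from here on. A direct rearrangement yields the decomposition
\[
u_k - \frac{Au_k^*}{\lambda_k^*} \;=\; \Bigl(\frac{A^* u_k}{\lambda_k} - u_k^*\Bigr) \;+\; \frac{E(u_k - u_k^*)}{\lambda_k} \;+\; \frac{\lambda_k^* - \lambda_k}{\lambda_k\lambda_k^*}\,Eu_k^*.
\]
Weyl's inequality combined with (b) gives $|\lambda_k-\lambda_k^*|\le\gamma\Delta^*$, and hypothesis (c) applied to the \emph{deterministic} vector $w=u_k^*$ yields $\|Eu_k^*\|_\infty\lesssim\Delta^*\varphi(1)\|u_k^*\|_\infty$, so the last summand is $O(\gamma\|u_k^*\|_\infty)$. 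For the first summand I expand $A^* = \sum_j\lambda_j^*u_j^*(u_j^*)^T$ and use the Davis--Kahan estimates $(\lambda_k^*/\lambda_k)\langle u_k^*,u_k\rangle = 1+O(\gamma)$ and $|\langle u_j^*,u_k\rangle|\lesssim\gamma$ for $j\ne k$; under the delocalization of the eigenbasis that is an input to the full Theorem~\ref{thm-main}, this bounds $\|A^*u_k/\lambda_k - u_k^*\|_\infty$ by $O(\gamma\|u_k^*\|_\infty)$ as well.

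The crux is the middle summand $(E(u_k-u_k^*))_m/\lambda_k$, because $u_k-u_k^*$ depends on the very row $E_{m\cdot}$ that multiplies it, so (c) cannot be invoked directly. I would introduce a leave-one-out matrix $A^{(m)}$ obtained from $A$ by replacing its $m$-th row and column with the corresponding entries of $A^*$, and let $u_k^{(m)}$ be its $k$-th eigenvector, signed to align with $u_k^*$. Then $A^{(m)}$ is independent of $E_{m\cdot}$, and I split
\[
(E(u_k-u_k^*))_m \;=\; (E(u_k^{(m)}-u_k^*))_m \;+\; E_{m\cdot}(u_k-u_k^{(m)}).
\]
Conditionally on $A^{(m)}$, hypothesis (c) applies to the now-deterministic $w=u_k^{(m)}-u_k^*$; combining this with $\|u_k^{(m)}-u_k^*\|_2\lesssim\gamma$ (Davis--Kahan on $A^{(m)}$), an inductive entrywise bound $\|u_k^{(m)}-u_k^*\|_\infty\lesssim\|u_k^*\|_\infty$, and the monotonicity of $\varphi(x)/x$, yields a contribution of order $\Delta^*\varphi(\gamma)\|u_k^*\|_\infty$, since $\|u_k^{(m)}-u_k^*\|_2/(\sqrt{n}\|u_k^{(m)}-u_k^*\|_\infty)\lesssim\gamma$ once $\|u_k^*\|_\infty\gtrsim 1/\sqrt{n}$. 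For the remainder $E_{m\cdot}(u_k-u_k^{(m)})$, Cauchy--Schwarz combined with a rank-two perturbation bound $\|u_k-u_k^{(m)}\|_2\lesssim\|A-A^{(m)}\|_2/\Delta^*$ and the concentration of $\|E_{m\cdot}\|_2$ (which is implicit in (c)) gives the desired $O(\gamma\|u_k^*\|_\infty)$ contribution.

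The main obstacle is justifying the inductive entrywise estimate $\|u_k^{(m)}-u_k^*\|_\infty\lesssim\|u_k^*\|_\infty$, since this is essentially the statement we are trying to prove. I expect to resolve it by a self-consistent bootstrap: set up a coupled inequality of the form $\max_m\|u_k^{(m)}-u_k^*\|_\infty \le C_1\|u_k^*\|_\infty + C_2(\gamma+\varphi(\gamma))\max_m\|u_k^{(m)}-u_k^*\|_\infty$, start from the trivial $\ell_\infty$-bound $\le 2$, and iterate the decomposition above (applied simultaneously to each index $m$) until the prefactor on the maximum is absorbed; since $\gamma+\varphi(\gamma)=o(1)$ by (b) and the properties of $\varphi$, the iteration converges to the desired bound. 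Assembling the three estimates then yields $\|u_k - Au_k^*/\lambda_k^*\|_\infty = O((\gamma+\varphi(\gamma))\|u_k^*\|_\infty)$ with the stated probability, giving the theorem.
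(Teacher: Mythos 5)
Your overall architecture---eigen-equation expansion, leave-one-out matrices $A^{(m)}$ to decouple row $m$ from the eigenvector, row concentration applied to $u_k^{(m)}-u_k^*$, and an absorption step---matches the paper's (Section~\ref{sec:outline1}), but the step you yourself call the crux fails quantitatively. For the remainder $E_{m\cdot}(u_k-u_k^{(m)})$ you invoke Cauchy--Schwarz together with the operator-norm perturbation bound $\|u_k-u_k^{(m)}\|_2\lesssim\|A-A^{(m)}\|_2/\Delta^*$. Since $A-A^{(m)}$ carries the entries $E_{mj}$, its spectral norm is of order $\|E_{m\cdot}\|_2$, which in the motivating models is of the same order as $\|E\|_2\asymp\gamma\Delta^*$ (Gaussian noise: both $\asymp\sigma\sqrt{n}$; SBM: both $\asymp\sqrt{\log n}$). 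Your route therefore yields only $|E_{m\cdot}(u_k-u_k^{(m)})|/\lambda_k\lesssim\gamma^2$, whereas the theorem demands $O\big((\gamma+\varphi(\gamma))\|u_k^*\|_\infty\big)$; the hypotheses give $\|u_k^*\|_\infty\le\gamma$ (not the reverse), and in SBM or $\Z_2$-synchronization $\|u_k^*\|_\infty\asymp n^{-1/2}$ while $\gamma\asymp(\log n)^{-1/2}$, so $\gamma^2$ exceeds the target by a factor of order $\gamma\sqrt{n}\to\infty$ and is not even $o(\|u_k^*\|_\infty)$. The missing idea is exactly the paper's sharp leave-one-out estimate: apply Davis--Kahan through the action of the one-row perturbation on the eigenvector itself, $\|u_k-u_k^{(m)}\|_2\lesssim\|(A-A^{(m)})u_k\|_2/\Delta^*$, and note that with the zeroed-out $A^{(m)}$ of \eqref{def:aux} this vector has $m$-th entry $A_{m\cdot}u_k=\lambda_k (u_k)_m$ and remaining entries $A_{im}(u_k)_m$, whence $\|u_k-u_k^{(m)}\|_2\lesssim\kappa\|u_k\|_\infty$---smaller than your bound by precisely the factor you are missing (see \eqref{ineq:warmup3}--\eqref{ineq:warmup4} and Lemma~\ref{lem-approximation-error-auxiliary}). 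This computation also favors zeroing out the row and column rather than replacing them by $A^*$, since it exploits the exact relation $A_{m\cdot}u_k=\lambda_k(u_k)_m$.

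The second gap is the entrywise input $\|u_k^{(m)}-u_k^*\|_\infty\lesssim\|u_k^*\|_\infty$, which you propose to obtain by a bootstrap over all $m$. To establish your coupled inequality you would have to run the whole decomposition for each $A^{(m)}$, and decoupling the rows of $A^{(m)}$ from $u_k^{(m)}$ then requires a second level of leave-one-out, hence on the order of $n^2$ row-concentration events; Assumption (c) only supplies failure probability $o(n^{-1})$ per event, so the union bound does not close, and the iteration must in any case not consume fresh randomness at each pass. The paper needs no induction: the same sharp $\ell_2$ estimate gives, deterministically on a single good event, the $r=1$ case of \eqref{ineq:Vmmax}, namely $\|u_k^{(m)}\langle u_k^{(m)},u_k^*\rangle-u_k^*\|_\infty\lesssim\kappa\|u_k\|_\infty+\|u_k^*\|_\infty$, after which one self-bounding inequality in $\|u_k\|_\infty$ is rearranged once. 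A smaller point: your bound on $A^*u_k/\lambda_k-u_k^*$ uses $\|u_j^*\|_\infty\lesssim\|u_k^*\|_\infty$ for $j\neq k$, which is not among the stated hypotheses (it is what Assumption A1 supplies in Theorem~\ref{thm-main}) and should be made explicit.
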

On the left-hand side, we are allowed to choose a suitable sign $s$ as eigenvectors are not uniquely defined. The second bound is a consequence of the first one, since $\gamma = o(1)$ and $\lim_{\gamma \to 0} \varphi(\gamma) = 0$ by continuity. We hide dependency on $\varphi(1)$ in the above bound, since $\varphi(1)$ is bounded by a constant under suitable signal-to-noise ratio. More details can be found in Theorem \ref{thm-main}. Therefore, the approximation error $\| u_k - Au_k^*/\lambda_k^* \|_{\infty}$ is much smaller than $\| u_k^*\|_{\infty}$. This rigorously confirms the intuitions in Section~\ref{sec:sample}.

Here are some remarks. (1) This theorem enables us to study $u_k$ via its linearization $Au_k^*/\lambda_k^*$, since the approximation error is usually small order-wise. (2) The conditions of the theorem are fairly mild. For SBM, the theorem is applicable as long as we are in the $\frac{\log n}{n}$ regime ($p=a\frac{\log n}{n}$ and $q=b\frac{\log n}{n}$), regardless of the relative sizes of $a$ and $b$.

\subsection{MLE, spectral algorithm, and strong consistency}

Once we obtain the approximation result \eqref{ineq:2}, the analysis of entrywise behavior of eigenvectors boils down to that of $Au_k^*/\lambda_k^*$. In the SBM example, suppose we have \eqref{ineq:1} and with probability $1-o(1)$, $\sgn(Au_2^*/\lambda_2^*) = \sgn(u_2^*)$ and all the entries of $Au_2^*/\lambda_2^*$ are bounded away from zero by an order of $1/\sqrt{n}$. Then $\sgn(u_2) = \sgn(Au_2^*/\lambda_2^*)$ holds with probability $1-o(1)$. Here $\sgn(\cdot)$ denotes the entrywise sign function.
The eigenvector-based estimator $\sgn(u_2)$ for block membership can be conveniently analyzed through $Au_2^*/\lambda_2^*$, whose entries are just linear combinations of Bernoulli variables.

We remark on a subtlety of our result: our central analysis is a good control of $\|u_k - Au_k^*/\lambda_k^*\|_{\infty}$, not necessarily of $\|u_k - u_k^*\|_{\infty}$. For example, in SBM, an inequality such as $\|u_2 - u_2^*\|_{\infty} < \|u_2^*\|_{\infty}$ is not true in general. In Figure~\ref{fig:intro}, the second boxplot shows that $\sqrt{n}\, \| u_2 - u_2^* \|_{\infty}$ may well exceed $1$ even if $\sgn(u_2) = \sgn(u_2^*)$. This suggests that the distributions of the coordinates of the two clusters, though well separated, have asymmetric tails. Our Theorem~\ref{thm::sbm-lowerbound} asserts that it is in vain to seek a good bound for $\|u_2 - u_2^*\|_{\infty}$. Instead, one should resort to the central quantity $Au_2^*/\lambda_2^*$. This may partly explain why the conjecture has remained open for long.

The vector $Au_k^*/\lambda_k^*$ also plays a pivotal role in the information-theoretic lower bound for exact recovery in SBM, established in \cite{ABH16}. It is necessary to ensure $ ( Au_2^*/\lambda_2^*)_i > 0, \forall i \in J$ to hold with probability at least $1/3$. Otherwise, by symmetry and the union bound, with probability at least $1/3$ we can find some $i \in J$ and $i' \in J^c$ with $(Au_2^*/\lambda_2^*)_i < 0$ and $( Au_2^*/\lambda_2^* )_{i'} > 0$. Elementary calculation shows that in that case, a swap of group assignments of $i$ and $i'$ increases the likelihood. Thus the MLE $\hat x_{\mbox{\scriptsize MLE}}$ fails to exactly recover $J$. With a uniform prior on group assignments, the MLE is equivalent to the maximum \textit{a posteriori} estimator, which is optimal for exact recovery. Therefore, we must eliminate such local refinements to make exact recovery possible. This forms the core argument in \cite{ABH16}. The analysis above suggests an interesting property about the eigenvector-based estimator $\hat x_{eig}(A) := \sgn(u_2)$:

\begin{cor}\label{cor-intro}
Suppose we are given $a>b>0$ such that $\sqrt{a} \neq \sqrt{b} + \sqrt{2}$, i.e., we exclude the regime where $(a,b)$ is at the boundary of the phase transition. Then, whenever the MLE is successful, in the sense that $\hat x_{\mbox{\scriptsize MLE}} = x$ (up to sign) with probability $1-o(1)$, we have
\begin{equation*}
	\hat x_{eig}(A) = \hat x_{\mbox{\scriptsize MLE}}(A) = x
\end{equation*}
with probability $1-o(1)$. Here $x$ is the signed indicator of true communities.
\end{cor}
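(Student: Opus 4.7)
The plan is to use Theorem~\ref{thm:simple} to transfer the sign pattern of the linearization $\bar u := Au_2^*/\lambda_2^*$ to the eigenvector $u_2$, and then to verify that $\bar u$ has the correct entrywise signs with sufficient magnitude. First, I would invoke the sharp threshold from \cite{ABH16,mossel_consist}: MLE recovers $x$ up to sign with probability $1-o(1)$ if and only if $\sqrt{a}-\sqrt{b}\ge\sqrt{2}$. Since the corollary excludes the boundary case $\sqrt{a}=\sqrt{b}+\sqrt{2}$, the hypothesis forces the strict inequality $\sqrt{a}-\sqrt{b}>\sqrt{2}$, which creates the slack that will drive the large-deviation estimate below.

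Next I would check the hypotheses of Theorem~\ref{thm:simple} in the $\log n/n$ regime: $r=2$, $\lambda_2^*=(a-b)(\log n)/2$, $\|u_2^*\|_\infty=1/\sqrt{n}$, the standard operator-norm bound $\|A-A^*\|_2=O(\sqrt{\log n})$ gives $\gamma\asymp 1/\sqrt{\log n}$ so $\|u_2^*\|_\infty\le\gamma$ for large $n$, and the row-wise Bernoulli concentration (\hyperref[(c)]{c}) holds with the $1/\log(1/x)$-type $\varphi$ anticipated in the paper. Theorem~\ref{thm:simple} then yields, for an appropriate sign $s\in\{\pm1\}$,
\begin{equation*}
\|u_2 - s\bar u\|_\infty = o_{\P}(1/\sqrt{n}).
\end{equation*}

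The heart of the argument is a quantitative lower bound on $|\bar u_i|$. A direct computation gives $\bar u_i = \sgn((u_2^*)_i)\cdot Y_i/(\sqrt{n}\lambda_2^*)$, where $Y_i := D_i^{\mathrm{same}}-D_i^{\mathrm{diff}}$ is the within-minus-between degree difference at vertex $i$. In the strict regime $\sqrt{a}-\sqrt{b}>\sqrt{2}$, the Chernoff rate function $I$ for $Y_i/\log n$ satisfies $I(0)=(\sqrt{a}-\sqrt{b})^2/2 > 1$; by continuity and monotonicity of $I$ on $[0,(a-b)/2)$ there exists $c>0$ with $I(c)>1$, so a one-sided Chernoff bound yields $\P(Y_i \le c\log n) \le n^{-1-\delta}$ for some $\delta>0$, and a union bound over $i\in[n]$ gives $\min_i Y_i \ge c\log n$ with probability $1-o(1)$. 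Consequently $\sgn(\bar u_i)=\sgn((u_2^*)_i)$ for every $i$ and $\min_i|\bar u_i|\ge c'/\sqrt{n}$ for some $c'>0$.

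On the intersection of the two high-probability events, the $o(1/\sqrt{n})$ approximation error from Theorem~\ref{thm:simple} is eventually dominated by $c'/\sqrt{n}$, so $\sgn(u_2) = s\cdot\sgn(\bar u) = s\cdot\sgn(u_2^*)$. Since $\sgn(u_2^*)=\pm x$ by construction, $\hat x_{eig}(A)=\sgn(u_2)$ equals $x$ up to a global sign, matching $\hat x_{\mbox{\scriptsize MLE}}(A)$ under the hypothesis. The main obstacle is this quantitative upgrade: MLE success only guarantees $Y_i > 0$, while the Theorem~\ref{thm:simple} error of $o(1/\sqrt{n})$ is not a priori smaller than $1/(\sqrt{n}\log n)$; we therefore need $Y_i \gtrsim \log n$ uniformly, and this is exactly what the strict inequality $\sqrt{a}-\sqrt{b}>\sqrt{2}$, forced by excluding the phase-transition boundary, buys through slack in the LDP rate function.
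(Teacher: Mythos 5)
Your proposal is correct and follows essentially the same route as the paper: it converts MLE success off the boundary into the strict condition $\sqrt{a}-\sqrt{b}>\sqrt{2}$ via the known threshold, uses the $\ell_\infty$ linearization $u_2\approx Au_2^*/\lambda_2^*$ (the paper invokes Corollary~\ref{cor-sbm2}, you invoke Theorem~\ref{thm:simple}, which suffices here), and lower-bounds the within-minus-between degree differences by $c\log n$ uniformly via a Chernoff bound with rate exceeding $1$ — exactly the content of the paper's Lemma~\ref{lem-tail} and the proof of Theorem~\ref{thm-sbm}(i). No gaps of substance.
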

This is because the success of $\hat x_{\mbox{\scriptsize MLE}} $ hinges on $\sgn(Au_2^*/\lambda_2^*) = \sgn(u_2^*)$, which also guarantees $\hat x_{eig}$ to work. See Section~\ref{sec:sbm} for details. Such phenomenon appears in two applications considered in this paper.

\subsection{An iterative perspective: power iterations}

In the SBM, a key observation is that $ \| u_2 - A u_2^* / \lambda_2 ^* \|_{\infty} $ is small. Here we give some intuitions from an iterative (or algorithmic) perspective. For simplicity, we will focus on the top eigenvector $\bar{u}$ of the centered adjacency matrix $\bar{A} = A - \frac{\hat{d}}{n} \mathbf{1}_n \mathbf{1}_n^T$.

It is well known that the top eigenvector of a symmetric matrix can be computed via the power method. For almost any possible initialization $u^0$, the iterations $u^{t+1} = \bar{A} u^t /  \| \bar{A} u^t \|_2$ converge to $\bar{u}$. Suppose we set $u^0 = u_2^*$, the top eigenvector of $\E \bar{A}$. Although this is not a real algorithm due to the initialization, it helps us gain theoretical insights.

The first iterate after initialization is $u^1 = \bar{A} u_2^* /  \| \bar{A} u_2^* \|_2$. Standard concentration inequalities show that $\| \bar{A} u_2^* \|_2\approx \bar{\lambda}^*$, the top eigenvalue of $\E \bar{A}$. Therefore, $u^1$ is approximately $ \bar{A} u_2^*/\bar{\lambda}^*$, which coincides with our first-order approximation. If $u^t$ converges to $\bar{u}$ sufficiently fast, $u^1$ can already be good enough. This is similar to the rationale of one-step estimator \citep{Bic75}: a single, carefully designed iterate may improve the precision of a good initialization to the desired level. Figure \ref{fig:power} helps illustrate this idea.

\begin{figure}[h!]
	\centering
			\includegraphics[height=0.35\textwidth]{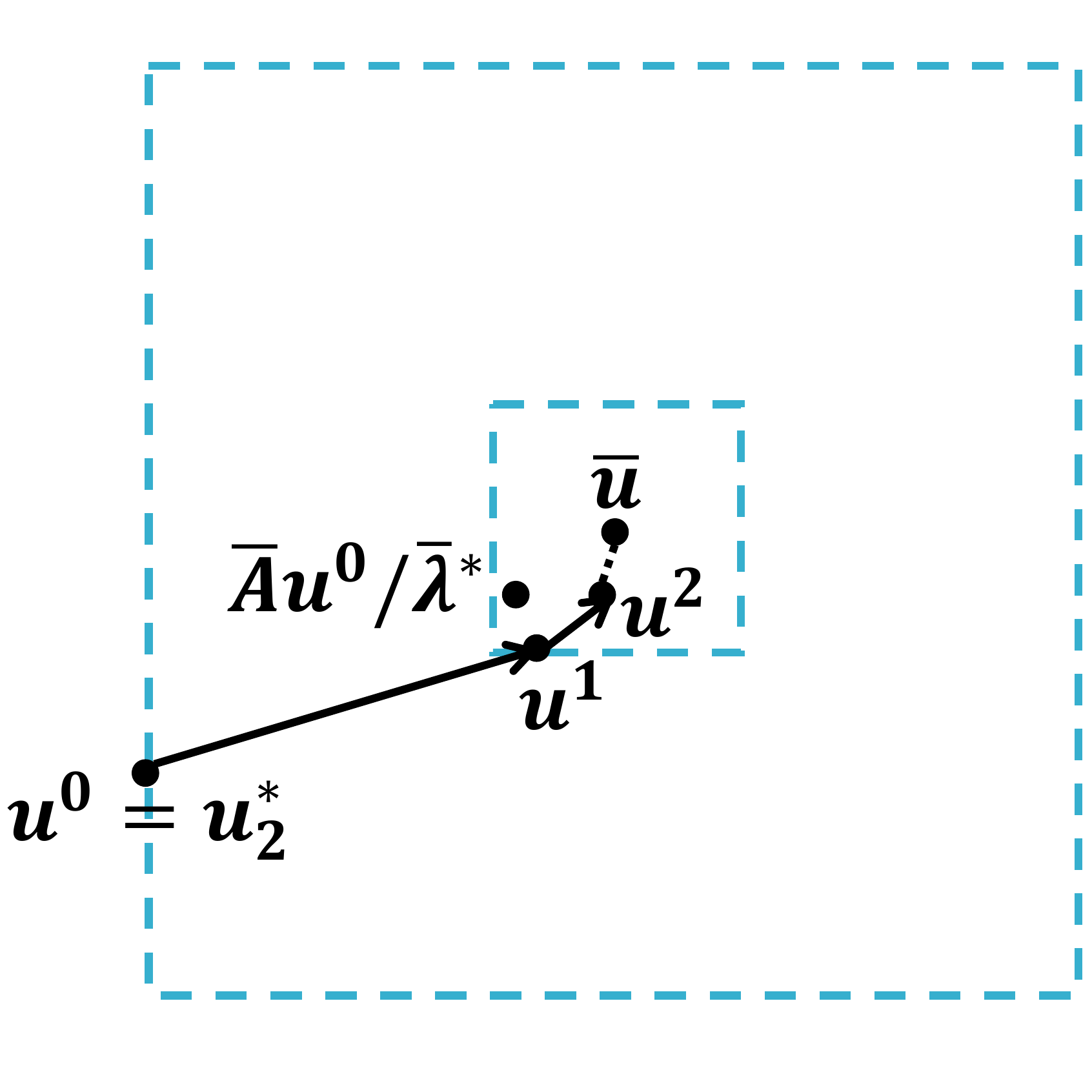}
	\caption{Error decay in power iterations. The larger and smaller squares represent $\ell_{\infty}$ balls centered at $\bar{u}$ with radii $\|u^0-\bar{u}\|_{\infty}$ and $\|u^1-\bar{u}\|_{\infty}$, respectively.
		}\label{fig:power}
\end{figure}

The iterative perspective has been explored in recent works \citep{Zho17, ZhoBou17}, where the latter studied both the eigenvector estimator and the MLE of a nonconvex problem. We are not going to show any proof with iterations or induction. Instead, we resort to the Davis-Kahan $\sin\Theta$ theorem, combined with a ``leave-one-out'' technique. Nevertheless, we believe the iterative perspective is helpful to many other (nonconvex) problems where a counterpart of Davis-Kahan theorem is absent.

\subsection{Related works}\label{sec:related}

The study of eigenvector perturbation dates back to Rayleigh \citep{rayleigh1896theory} and Schr\"{o}dinger \citep{schrodinger1926quantisierung}, in which asymptotic expansions were obtained. Later, \cite{DavKah70} developed elegant nonasymptotic perturbation bounds for eigenspaces gauged by unitary-invariant norms. 
These were extended to general rectangular matrices in \cite{Wed72}. See \cite{SSu90} for a comprehensive investigation. 
Recently, \cite{OVW18} showed significant improvements of classical, deterministic bounds when the perturbation is random.
Norms that depend on the choice of basis, such as the $\ell_\infty$ norm, are not addressed in these works but are of great interest in statistics.

There are several recent papers related to the study of entrywise perturbation. \cite{FanWanZho16} obtained $\ell_\infty$ eigenvector perturbation bounds. Their results were improved by \cite{CapTanPri17}, in which the authors focused on $2 \to \infty$ norm bounds for eigenspaces. \cite{EldBelWan17} developed an $\ell_\infty$ perturbation bound by expanding the eigenvector perturbation into infinite series.
These results are deterministic by nature, and thus yield suboptimal bounds under challenging stochastic regimes with small signal-to-noise ratio. By taking advantage of randomness, \cite{KLo16} and \cite{KolXia16} studied bilinear forms of singular vectors, leading to a sharp bound on $\ell_{\infty}$ error that was later extended to tensors \citep{XiaZho17}. \cite{Zho17} characterized entrywise behaviors of eigenvectors and explored their connections with Rayleigh-Schr\"{o}dinger perturbation theory. \cite{ZhoBou17} worked on a related but slighted more complicated problem named ``phase synchronization'', and analyzed entrywise behaviors of both the spectral estimator and MLE under a near-optimal regime. \cite{CFM17} used similar ideas to derive the optimality of both the spectral estimator and MLE in top-$K$ ranking problem.

There is a rich literature on the three applications in this paper. The synchronization problems \citep{Sin11, CucLipSin12} aim at estimating unknown signals (usually group elements) from their noisy pairwise measurements, and have attracted much attention in optimization and statistics community recently \citep{bandeira2014tightness, JMR16}. They are very relevant models for cryo-EM, robotics \citep{Sin11, Ros16} and more.

The stochastic block model has been studied extensively in the past decades, with renewed activity in the recent years \citep{Coj06,decelle,massoulie-STOC,Mossel_SBM2,KMM13,ABH16,sbm-groth,levina,ASa15,montanari_sen,bordenave,colin3cpam,banks2}, see \cite{Abb17} for further references, and in particular \cite{McS01}, \cite{Vu14}, \cite{prout}, \cite{massoulie-xu}, \cite{CRV15} and \cite{prout2}, which are closest to this paper in terms of regimes and algorithms. The matrix completion problems \citep{CRe09,CPl10,KMO101} have seen great impacts in many areas, and new insights and ideas keep flourishing in recent works \citep{Ge16, SLu16}. These lists are only a small fraction of the literature and are far from complete.


We organize our paper as follows: we present our main theorems of eigenvector and eigenspace perturbation in Section \ref{sec:main}, which are rigorous statements of the intuitions introduced in Section \ref{sec:intro}. In Section \ref{sec:app}, we apply the theorems to three problems: $\Z_2$-synchronization, SBM, and matrix completion from noisy entries. In Section \ref{sec:sim}, we present simulation results to verify our theories. Finally, we conclude and discuss future works in Section \ref{sec:discuss}.

\subsection{Notations}
We use the notation $[n]$ to refer to $\{1,2,\cdots,n\}$ for $n\in \Z_+$, and let $\R_+ = [0,+\infty)$. For any real numbers $a,b \in \R$, we denote $a \vee b = \max\{a,b\}$ and $a \wedge b = \min\{a,b\}$. For nonnegative $a_n$ and $b_n$ that depend on $n$ (e.g., problem size), we write $a_n \lesssim b_n$ to mean $a_n \le C b_n$ for some constant $C>0$. The notation $\asymp$ is similar, hiding two constants in upper and lower bounds. For any vector $x \in \R ^n$, we define $\| x\|_2 = \sqrt{ \sum_{i=1}^{n} x_i^2 }$ and $\| x \|_{\infty } = \max_i  |x_i| $.
For any matrix $M \in \R^{n \times d}$, $M_{i \cdot}$ refers to its $i$-th row, which is a row vector, and $M_{\cdot i}$ refers to its $i$-th column, which is a column vector. The matrix spectral norm is $\| M \|_2 = \max_{\|x\|_2=1} \|Mx\|_2$, the matrix max-norm is $\| M \|_{\max} = \max_{i,j} |M_{ij}|$, and the matrix $2 \to \infty$ norm is $\| M \|_{2 \to \infty} = \max_{\|x\|_2=1} \|Mx\|_{\infty} = \max_i \| M_{i \cdot} \|_2$. The set of $n\times r$ matrices with orthonormal columns is denoted by $\mathcal{O}_{n \times r}$.

\section{Main results}\label{sec:main}

\subsection{Random matrix ensembles}\label{regularity-conditions}

Suppose $A \in \R^{n\times n}$ is a symmetric random matrix and $A^* = \E A$. Denote the eigenvalues of $A$ by $\lambda_1 \geq \cdots \geq \lambda_n$, and their associated eigenvectors by $\{ u_j \}_{j=1}^n$. Analogously for $A^*$, the eigenvalues and eigenvectors are $\lambda_1^* \geq \cdots \geq \lambda_n^*$ and $\{ u^*_j \}_{j=1}^n$, respectively. We also adopt the convention $\lambda_0= \lambda^*_0 = +\infty$ and $\lambda_{n+1} =  \lambda_{n+1}^* = -\infty $. We allow some eigenvalues to be identical. Thus, some eigenvectors may be defined up to rotations.

Suppose $r$ and $s$ are two integers satisfying $1 \leq r \leq n$ and $0\leq s\leq n-r$.  Let $U = ( u_{s+ 1} , \cdots, u_{s + r}) \in \mathbb{R}^{n \times r}$, $U^* = ( u_{s + 1}^* , \cdots, u_{s + r}^* )\in  \mathbb{R}^{n \times r}$ and $\Lambda^* = \diag( \lambda_{s + 1}^* , \cdots, \lambda_{s + r}^* )\in  \mathbb{R}^{r \times r}$. We are interested in the eigenspace $\spann(U)$. To this end, we assume there is an eigen-gap $\Delta^*$ seperating $\{ \lambda_{s+j}^* \}_{j=1}^r $ from $0$ and other eigenvalues (see Figure \ref{fig:eigengap}), i.e.,
\begin{equation}\label{def:Delta}
\Delta^* = (\lambda_s^* - \lambda_{s+1}^*) \wedge (\lambda_{s+r}^* - \lambda_{s+r+1}^* ) \wedge \min_{i \in [r]} | \lambda_{s+i}^*|.
\end{equation}
Compared with the usual eigen-gap \citep{DavKah70}, our definition also takes the distances between eigenvalues and $0$ into consideration. When $A^*$ is rank-deficient, $0$ is itself an eigenvalue.

\begin{figure}[b]
\centering
\begin{tikzpicture}
[
scale = 1,
axis/.style={very thick, ->, >=stealth'},
]
\draw[axis] (-0.5,0) -- (8.5,0) node(xline)[right]{};
\draw[thick] (0,-0.1) -- (0,0.2);
\node [below] at (-0.2,-0.1) {$\lambda^*_{s+r+1}$};
\draw[thick] (0.6,-0.1) -- (0.6,0.2);
\node [below] at (0.6,-0.1) {$0$};
\draw[thick] (3,-0.1) -- (3,0.2);
\node [below] at (2.9,-0.1) {$\lambda^*_{s+r}$};
\draw[thick] (3.3,-0.1) -- (3.3,0.2);
\draw[thick] (3.6,-0.1) -- (3.6,0.2);
\draw[thick] (3.8,-0.1) -- (3.8,0.2);
\draw[thick] (4.3,-0.1) -- (4.3,0.2);
\node [below] at (4.3,-0.1) {$\lambda^*_{s+2}$};
\draw[thick] (5,-0.1) -- (5,0.2);
\node [below] at (5.2,-0.1) {$\lambda^*_{s+1}$};
\draw[thick] (8,-0.1) -- (8,0.2);
\node [below] at (8,-0.1) {$\lambda^*_{s}$};
\draw [thick,decorate,decoration={brace,amplitude=10pt}, xshift=0pt,yshift=1]
	(0.6,0.3)--(3,0.3) node[midway,xshift=2, yshift=16] {$\Delta^*$};
\end{tikzpicture}
\caption{Eigen-gap $\Delta^*$}\label{fig:eigengap}
\end{figure}
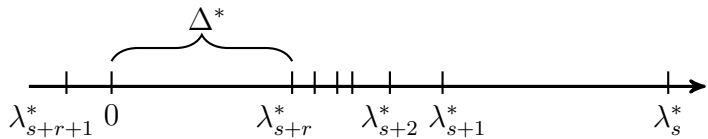
We define $\kappa: = \max_{i \in [r]} | \lambda_{s+i}^*| / \Delta^*$, which is always bounded from below by 1. In our applications, $\kappa$ is usually bounded from above by a constant, i.e., $\Delta^*$ is comparable to $\{ \lambda_{s+j}^* \}_{j=1}^r $ in terms of magnitude.

The concentration property is characterized by a parameter $\gamma \ge 0$, and a function $\varphi(x): \mathbb{R}_+ \to \mathbb{R}_+$. Roughly speaking, $\gamma^{-1}$ resembles the signal-to-noise ratio, and $\gamma$ typically vanishes as $n$ tends to infinity. $\varphi(x)$ is chosen according to the distribution of $A$, and is typically bounded by a constant for $x \in [0,1]$. In particular, we take $\varphi(x) \propto x$ for Gaussian matrices and $\varphi(x) \propto (1 \vee \log(1/x))^{-1}$ for Bernoulli matrices ---see Figure~\ref{fig:phi}. In addition, we will also make a mild structural assumption:  $\| A^* \|_{2 \to \infty} \le \gamma \Delta^*$. In many applications involving low-rank structure, the eigenvalues of interest (and thus $\Delta^*$) typically scale with $n$, whereas $ \| A^* \|_{2\rightarrow\infty }$ scales with $\sqrt{n}$.

Based on the quantities above, we make the following assumptions.
\begin{enumerate}
\item[\textbf{A1}]\label{cond-1} \textbf{(Incoherence)} $\| A^* \|_{2 \to \infty} \le \gamma \Delta^*$.
\item[\textbf{A2}]\label{cond-2} \textbf{(Row- and column-wise independence)} For any $m\in[n]$, the entries in the $m$th row and column of $A$ are independent with others, i.e. $\{ A_{ij} : i = m \text{ or } j = m \}$ are independent of $\{ A_{ij} : i \neq m,  j\neq m \}$.
\item[\textbf{A3}]\label{cond-3} \textbf{(Spectral norm concentration)}
$32 \kappa \max\{ \gamma , \varphi(\gamma) \} \le 1$ and
for some $\delta_0 \in (0,1)$,
\begin{equation}
\mathbb{P} \left( \|A-A^*\|_2\le \gamma  \Delta^* \right) \ge 1 - \delta_0.
\end{equation}
\item[\textbf{A4}]\label{cond-4} \textbf{(Row concentration)} Suppose $\varphi(x)$ is continuous and non-decreasing in $\mathbb{R}_+$ with $\varphi(0) = 0$, $\varphi(x)/x$ is non-increasing in $\mathbb{R}_+$, and $\delta_1 \in (0,1)$. For any $m\in[n]$ and $ W \in \mathbb{R}^{n\times r}$,
\begin{equation}\label{eqn-cond-4}
\P\left( \| (A-A^*)_{m\cdot} W \|_2 \le \Delta^* \| W \|_{ 2 \to \infty }  \,
\varphi  \Big( \frac{\| W \|_F }{\sqrt{n} \| W \|_{2 \to \infty} } \Big)
\right) \ge 1 - \frac{\delta_1}{n}.
\end{equation}
\end{enumerate}

Here are some remarks and intuitions. Assumption \hyperref[cond-1]{1} requires that no row of $A^*$ is dominant. To relate it to the usual concept of incoherence \citep{CRe09,CLMW11}, we consider the case $A^* = U^* \Lambda^* (U^*)^T$ and let $\mu(U^*) = \frac{n}{r} \max_{i \in [n]} \sum_{k} (U^*_{ik})^2 = \frac{n}{r} \|U^*\|_{2\to\infty}^2$. Note that
\begin{align}\label{ineq-cor-main}
\| U^* \Lambda^* (U^*)^T \|_{2\to\infty}
	\leq \| U^* \|_{2\to\infty} \| \Lambda^* (U^*)^T \|_{2}
	= \| U^* \|_{2\to\infty} \| \Lambda^*\|_2
\end{align}
and $\kappa = \|\Lambda^* \|_2 / \Delta^*$. Then Assumption \hyperref[cond-1]{1} is satisfied as long as $\mu(U^*) \le \frac{n \gamma^2}{r \kappa^2} $, which is very mild.

Assumption \hyperref[cond-2]{2} is a mild independence assumption, and it encompasses common i.i.d.\ noise assumptions.

Assumption \hyperref[cond-3]{3} requires the spectral norm of the noise matrix $A-A^*$ to be dominated by $\Delta^*$, which can be interpreted as signal strength. In our example of $\mathbb{Z}_2$ synchronization (see Section \ref{sec:Z2}), we have $\Delta^*=n$, and $A-A^*$ have i.i.d.\ $N(0,\sigma^2)$ entries above the diagonal. Since $\|A-A^*\|_2 \lesssim \sigma \sqrt{n}$ by standard concentration results, we need to require $\sigma = O(\gamma \sqrt{n})$.

Assumption \hyperref[cond-4]{4} is a generalization of the row concentration assumption in Section \ref{sec:eigenvec}, and the function $\varphi$ is problem-dependent. Here we explain the role of $\varphi$ using a special case where $r=1$ and $A\in\{0,1\}^{n\times n}$ has i.i.d. Bernoulli entries with parameter $p=p_n$ on and above its diagonal. Then $\Delta^*=np$, $\sum_{i=1}^{n}A^*_{mi} = np$. If $p$ is not too small, with high probability we have $\sum_{i=1}^{n}A_{mi} \lesssim np$ and thus
\[
| (A-A^*)_{m\cdot} W | \leq \|W\|_{\infty}  \sum_{i=1}^{n} |(A-A^*)_{mi}| \lesssim \|W\|_{\infty}np = \Delta^* \|W\|_{\infty}.
\]
If many entries in $W$ have magnitudes much less than $\|W\|_{\infty}$, there should be less fluctuation and better concentration. Indeed, Assumption \hyperref[cond-4]{4} stipulates a tighter bound by a factor of $\varphi(\frac{\| W \|_2}{\sqrt{n} \| W \|_{\infty} })$, where $\frac{\| W \|_2}{\sqrt{n} \| W \|_{\infty} }$ is typically much smaller than $1$ in this case.
This delicate concentration bound turns out to be crucial in the analysis of SBM, where $A$ is a sparse binary matrix.

\subsection{Entrywise perturbation of general eigenspaces}\label{sec:eigenspace}

In this section, we generalize Theorem~\ref{thm:simple} from individual eigenvectors to eigenspaces under milder conditions that are characterized by additional parameters. Note that neither $U$ nor $U^*$ is uniquely defined, and they can only be determined up to a rotation if the eigenvalues are identical. For this reason, our result has to involve an $r \times r$ orthogonal matrix.
Beyond asserting our result holds up to a suitable rotation, we give an explicit form of such orthogonal matrix.

Let $H = U^T U^* \in \mathbb{R}^{r \times r}$, and its singular value decomposition be $H = \bar{U} \bar{\Sigma} \bar{V}^T$, where $\bar U, \bar V \in \mathbb{R}^{r \times r}$ are orthonormal matrices, and $\bar \Sigma \in  \mathbb{R}^{r \times r}$ is a diagonal matrix. Define an orthonormal matrix $\sgn(H) \in \mathbb{R}^{r \times r}$ as
\begin{equation}\label{def:Hsvd}
\sgn(H) := \bar U \bar V^T.
\end{equation}
This orthogonal matrix is called the \textit{matrix sign function} \citep{Gro11}.
Now we are able to extend the results in Section \ref{sec:eigenvec} to general eigenspaces.
\begin{thm}\label{thm-main}
Under Assumptions \hyperref[cond-1]{A1}--\hyperref[cond-4]{A4}, with probability at least $1-\delta_0-2\delta_1$ we have
\begin{align*}
& \|U  \|_{2\to\infty} \lesssim \left( \kappa + \varphi(1) \right) \| U^* \|_{2\to\infty} + \gamma \| A^* \|_{2 \to \infty} / \Delta^*, \\
&\|U \sgn(H) - A U^*(\Lambda^*)^{-1}\|_{2\to\infty}\lesssim  \kappa
( \kappa + \varphi(1)	)  ( \gamma + \varphi(\gamma) ) \| U^*\|_{2\to\infty}
+\gamma \| A^* \|_{2 \to \infty} / \Delta^*,\\
&\|U \sgn(H) - U^*\|_{2\to\infty}\leq \|U \sgn(H) - A U^*(\Lambda^*)^{-1}\|_{2\to\infty} + \varphi(1) \| U^*\|_{2\to\infty}. 
\end{align*}
Here the notation $\lesssim$ only hides absolute constants.
\end{thm}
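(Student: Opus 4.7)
The plan is to use the identity $U = AU\Lambda^{-1}$---valid because Assumption A3 combined with Weyl's inequality renders $\Lambda$ invertible with $\|\Lambda^{-1}\|_2 \lesssim 1/\Delta^*$---to rewrite
$$U\sgn(H) - AU^*(\Lambda^*)^{-1} = A\bigl[U\Lambda^{-1}\sgn(H) - U^*(\Lambda^*)^{-1}\bigr] =: AR,$$
and then control $(AR)_{m\cdot} = A^*_{m\cdot}R + (A-A^*)_{m\cdot}R$ uniformly in $m$. To handle $R$, I decompose $R = (U\sgn(H)-U^*)(\Lambda^*)^{-1} + U\sgn(H)\bigl[\sgn(H)^T\Lambda^{-1}\sgn(H) - (\Lambda^*)^{-1}\bigr]$; the Davis-Kahan $\sin\Theta$ theorem together with A3 bounds $\|U\sgn(H)-U^*\|_F \lesssim \kappa\gamma$, and Weyl's inequality together with the fact that $\sgn(H)$ is near-identity on well-separated eigenvalues bounds the spectral norm of the bracketed difference by $\lesssim \gamma/\Delta^*$, giving $\|R\|_F \lesssim \kappa\gamma/\Delta^*$. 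Consequently $\|A^*_{m\cdot}R\|_2 \le \|A^*\|_{2\to\infty}\|R\|_F \lesssim \gamma\|A^*\|_{2\to\infty}/\Delta^*$, producing the deterministic summand in the second bound.

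The delicate piece is $(A-A^*)_{m\cdot}R$: A4 cannot be applied directly, since $R$ depends on the $m$-th row and column of $A$. I use a leave-one-out device: let $A^{(m)}$ be the symmetric matrix obtained by replacing the $m$-th row and column of $A$ by their expectations, and define $U^{(m)}, \Lambda^{(m)}, H^{(m)}, R^{(m)}$ analogously. Since $A-A^{(m)}$ is supported on row and column $m$, $\|A-A^{(m)}\|_2 \lesssim \|(A-A^*)_{m\cdot}\|_2$, so a further application of Davis-Kahan shows $U^{(m)}$ is close to $U$ up to rotation and $\|R-R^{(m)}\|_F$ is small. By Assumption A2, $(A-A^*)_{m\cdot}$ is independent of $A^{(m)}$ and hence of $R^{(m)}$, so splitting
$$(A-A^*)_{m\cdot}R = (A-A^*)_{m\cdot}R^{(m)} + (A-A^*)_{m\cdot}(R-R^{(m)}),$$
A4 applied conditionally with $W = R^{(m)}$ bounds the first summand by $\lesssim \Delta^*\|R^{(m)}\|_{2\to\infty}\varphi\bigl(\|R^{(m)}\|_F/(\sqrt{n}\|R^{(m)}\|_{2\to\infty})\bigr)$, while the second is bounded by $\|(A-A^*)_{m\cdot}\|_2\cdot\|R-R^{(m)}\|_F$.

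The main obstacle is the self-referential nature of the bounds: the estimate on $\|R^{(m)}\|_{2\to\infty}$ involves $\|U^{(m)}\|_{2\to\infty}$, which satisfies the same kind of inequality via the same argument applied to $A^{(m)}$. This yields a self-bounding estimate of the form $\|U\|_{2\to\infty} \le C_1(\kappa+\varphi(1))\|U^*\|_{2\to\infty} + C_2\gamma\|A^*\|_{2\to\infty}/\Delta^* + C_3\kappa\max\{\gamma,\varphi(\gamma)\}\|U\|_{2\to\infty}$; the quantitative hypothesis $32\kappa\max\{\gamma,\varphi(\gamma)\}\le 1$ in A3 forces $C_3<1$, so the last term is absorbed into the left-hand side, yielding the first bound of the theorem. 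Plugging this back into the estimate for $(A-A^*)_{m\cdot}R^{(m)}$ gives the second bound, where the monotonicity of $\varphi(x)/x$ is used to trade off the dependence of $\varphi$ on its argument. Finally, the third bound is a direct triangle inequality: since $A^*U^* = U^*\Lambda^*$, one has $AU^*(\Lambda^*)^{-1} - U^* = (A-A^*)U^*(\Lambda^*)^{-1}$, whose $m$-th row is $(A-A^*)_{m\cdot}W$ with the deterministic matrix $W := U^*(\Lambda^*)^{-1}$; a direct application of A4 bounds this by $\lesssim \varphi(1)\|U^*\|_{2\to\infty}$ after using $\|W\|_{2\to\infty}\le \|U^*\|_{2\to\infty}/\Delta^*$ and $\|W\|_F\lesssim \sqrt{r}/\Delta^*$.
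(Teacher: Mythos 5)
Your overall architecture --- the identity $U = AU\Lambda^{-1}$, a leave-one-out matrix $A^{(m)}$ to decouple $(A-A^*)_{m\cdot}$ from the eigenvectors, row concentration \hyperref[cond-4]{A4} applied to the independent piece, and absorption of the self-referential term via $32\kappa\max\{\gamma,\varphi(\gamma)\}\le 1$ --- is the same strategy the paper uses. But you leave unquantified exactly the step on which the theorem hinges: how close $U^{(m)}$ (hence $R^{(m)}$) is to $U$ (hence $R$). You write that ``a further application of Davis--Kahan shows \dots\ $\|R-R^{(m)}\|_F$ is small,'' starting from $\|A-A^{(m)}\|_2\lesssim\|(A-A^*)_{m\cdot}\|_2$. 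That crude route only yields $\|R-R^{(m)}\|_F\lesssim \|A-A^{(m)}\|_2/(\Delta^*)^2\lesssim\gamma/\Delta^*$ (since $\|A-A^{(m)}\|_2\le 2\|A\|_{2\to\infty}\lesssim\gamma\Delta^*$ under A1 and A3), and then your error term $\|(A-A^*)_{m\cdot}\|_2\cdot\|R-R^{(m)}\|_F$ is only controlled by $\gamma\Delta^*\cdot\gamma/\Delta^*=\gamma^2$. The theorem requires this contribution to be of order $\kappa(\kappa+\varphi(1))(\gamma+\varphi(\gamma))\|U^*\|_{2\to\infty}$, i.e.\ proportional to $\|U^*\|_{2\to\infty}\approx\sqrt{r/n}$; in the regimes the theorem is built for (e.g.\ SBM, where $\gamma\asymp 1/\sqrt{\log n}$ and $\|U^*\|_{2\to\infty}=1/\sqrt{n}$) one has $\gamma^2\gg\gamma\|U^*\|_{2\to\infty}$, so the naive coupling bound destroys the result. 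What is actually needed --- and what the paper proves in Lemma \ref{lem-approximation-error-auxiliary} --- is the sharp, row-norm-proportional coupling $\|UU^T-U^{(m)}(U^{(m)})^T\|_2\lesssim\kappa\|(UH)_{m\cdot}\|_2$, obtained by applying Davis--Kahan with the perturbation $(A-A^{(m)})UH$ rather than with $\|A-A^{(m)}\|_2$, and bounding $\|(A-A^{(m)})UH\|_F$ through the eigen-equation $A_{m\cdot}U=U_{m\cdot}\Lambda$ together with the commutator control $\|\Lambda H-H\Lambda\|_2\le 2\|E\|_2$ of Lemma \ref{lem:H}. Without stating and proving this coupling, the proposal does not close.

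Two further points. First, your bootstrap as phrased is circular: you control $\|R^{(m)}\|_{2\to\infty}$ through $\|U^{(m)}\|_{2\to\infty}$, ``which satisfies the same kind of inequality via the same argument applied to $A^{(m)}$'' --- but that argument would itself require leave-one-out matrices for $A^{(m)}$ (leave-two-out), and so on. The paper avoids the regress by comparing $U^{(m)}H^{(m)}$ directly to $UH$ using the same sharp coupling, so the only unknown appearing on both sides is $\|UH\|_{2\to\infty}$, which is then absorbed (see \eqref{ineq:Vmmax} and the rearrangement leading to \eqref{ineq:UHfinal}). Second, the justification ``$\sgn(H)$ is near-identity on well-separated eigenvalues'' is not correct: the eigenvalues $\lambda^*_{s+1},\dots,\lambda^*_{s+r}$ inside the block may coincide, and $\sgn(H)$ need not be close to $I_r$; your claimed bound $\|\sgn(H)^T\Lambda^{-1}\sgn(H)-(\Lambda^*)^{-1}\|_2\lesssim\gamma/\Delta^*$ is attainable, but it must be derived from $\Lambda H-H\Lambda^*=U^TEU^*$ and $\|H-\sgn(H)\|_2\lesssim\gamma^2$, not from any approximate commutativity. (Your treatment of the third inequality via A4 with $W=U^*(\Lambda^*)^{-1}$, and the probability accounting $\delta_0+2\delta_1$, are fine and match the paper.)
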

The third inequality is derived by simply writing $U \sgn(H) - U^*$ as a sum of the first-order error $EU^*(\Lambda^*)^{-1}$ and higher-order error $U \sgn(H) - A U^*(\Lambda^*)^{-1}$, and bounding $EU^*(\Lambda^*)^{-1}$ by the row concentration Assumption \hyperref[cond-4]{A4}. It will be useful for the noisy matrix completion problem. It is worth pointing out that Theorem \ref{thm-main} is applicable to any eigenvector of $A$ that is not necessarily the leading one. This is particularly powerful in SBM (Section \ref{sec:sbm}) where we need to analyze the second eigenvector. In addition, we do not need $A^*$ to have low rank, although the examples to be presented have such structure. For low-rank $A^*$, estimation errors of all the eigenvectors can be well controlled by the following corollary of Theorem \ref{thm-main}.
\begin{cor}\label{cor-main}
	Let Assumptions \hyperref[cond-1]{A1}--\hyperref[cond-4]{A4} hold, and suppose that $A^* = U^* \Lambda^* (U^*)^T$. With probability at least $1-\delta_0-2\delta_1$, we have
	\begin{align*}
		&\|U  \|_{2\to\infty} \lesssim \left( \kappa + \varphi(1) \right) \| U^* \|_{2\to\infty},\\
		&\|U \sgn(H) - A U^*(\Lambda^*)^{-1}\|_{2\to\infty}\lesssim  \kappa
		( \kappa + \varphi(1)	)  ( \gamma + \varphi(\gamma) ) \| U^*\|_{2\to\infty},\\
		&\|U \sgn(H) - U^*\|_{2\to\infty}\leq \|U \sgn(H) - A U^*(\Lambda^*)^{-1}\|_{2\to\infty} + \varphi(1) \| U^*\|_{2\to\infty}.
	\end{align*}
	Here the notation $\lesssim$ only hides absolute constants.
\end{cor}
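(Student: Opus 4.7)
The plan is to obtain Corollary \ref{cor-main} as a direct specialization of Theorem \ref{thm-main}, under the extra low-rank assumption $A^* = U^*\Lambda^*(U^*)^T$. The three bounds in the corollary differ from the three bounds in the theorem only in the absence of the additive term $\gamma\|A^*\|_{2\to\infty}/\Delta^*$, so the entire task reduces to showing that under this extra assumption, this term is controlled by (and can therefore be absorbed into) the corresponding right-hand side of each bound.

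First, I would invoke Theorem \ref{thm-main} verbatim, which gives all three inequalities with the extra term $\gamma\|A^*\|_{2\to\infty}/\Delta^*$ on the event of probability at least $1-\delta_0-2\delta_1$. Next, I would use the inequality \eqref{ineq-cor-main}, which is already stated in the excerpt: when $A^* = U^*\Lambda^*(U^*)^T$,
\[
\|A^*\|_{2\to\infty} \leq \|U^*\|_{2\to\infty}\|\Lambda^*\|_2.
\]
By the definition $\kappa = \|\Lambda^*\|_2/\Delta^*$ (which, under the corollary's hypothesis, coincides with $\max_i|\lambda^*_{s+i}|/\Delta^*$ since those are the only nonzero eigenvalues of $A^*$), this yields
\[
\gamma\|A^*\|_{2\to\infty}/\Delta^* \leq \gamma\kappa\|U^*\|_{2\to\infty}.
\]

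Finally, I would observe that this bound fits inside the main terms of Theorem \ref{thm-main}. Since $\gamma\leq 1$ (a consequence of Assumption \hyperref[cond-3]{A3}) and $\kappa\geq 1$, we have $\gamma\kappa\|U^*\|_{2\to\infty}\leq \kappa\|U^*\|_{2\to\infty}$, which is dominated by $(\kappa+\varphi(1))\|U^*\|_{2\to\infty}$ up to an absolute constant, giving the first bound of the corollary. For the second bound, $\gamma\kappa \leq \kappa(\kappa+\varphi(1))(\gamma+\varphi(\gamma))$ holds trivially since $\kappa\geq 1$, $\varphi\geq 0$, and $\gamma\leq\gamma+\varphi(\gamma)$. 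The third bound is immediate from the second together with the triangle inequality (identical to the derivation in Theorem \ref{thm-main}), requiring no further work.

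There is no real obstacle here; the only subtlety worth checking is the identification $\|\Lambda^*\|_2 = \kappa\Delta^*$ under the corollary's low-rank hypothesis, which is immediate from the definition of $\Delta^*$ in \eqref{def:Delta} once one notes that $A^* = U^*\Lambda^*(U^*)^T$ forces all eigenvalues of $A^*$ outside the index set $\{s+1,\dots,s+r\}$ to be zero, so that the maximum magnitude of the $r$ retained eigenvalues is exactly $\|\Lambda^*\|_2$. Thus the corollary is really a clean algebraic tidying of Theorem \ref{thm-main} in the exactly-low-rank case.
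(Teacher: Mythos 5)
Your proposal is correct and follows exactly the paper's route: the paper derives Corollary~\ref{cor-main} directly from Theorem~\ref{thm-main}, inequality~\eqref{ineq-cor-main}, and $\kappa \ge 1$, which is precisely the absorption argument you spell out. (The only minor quibble is that $\|\Lambda^*\|_2 = \max_{i\in[r]}|\lambda^*_{s+i}| = \kappa\Delta^*$ holds by definition of $\Lambda^*$ regardless of the low-rank hypothesis, so that step needs no special justification.)
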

Corollary \ref{cor-main} directly follows from Theorem \ref{thm-main}, inequality (\ref{ineq-cor-main}) and the fact that $\kappa \geq 1$. Below we use a simple example to illustrate the results above. Let $A^*=\lambda^* u^* (u^*)^T$ be a rank-one matrix with $\lambda^*>0$ and $\|u^*\|_2=1$. Set $r=1$ and $s=0$. This structure implies $\Delta^* = \lambda^*$ and $\kappa= 1$. Suppose $A$ has independent entries on and above the diagonal. Such $A$ is usually called a \textit{spiked Wigner matrix} in statistics and random matrix theory.

Let Assumptions \hyperref[cond-1]{A1}-\hyperref[cond-4]{A4} hold. The first two inequalities in Corollary \ref{cor-main} are simplified as
\begin{align}
&\|u \|_{\infty} \lesssim \left( 1 + \varphi(1) \right) \| u^* \|_{\infty} , \label{ineq:thm0a}\\
&\left\|u -  A u^* / \lambda^* \right\|_{\infty}\lesssim  ( \gamma + \varphi(\gamma) ) ( 1 + \varphi(1)) \| u^*\|_{\infty}\label{ineq:thm0b}.
\end{align}
In many applications, $\varphi(1)\lesssim 1$ and $\gamma=o(1)$ as $n$ goes to infinity. Then \eqref{ineq:thm0a} controls the magnitude of the empirical eigenvector $u$ by that of the true eigenvector $u^*$ in the $\ell^\infty$ sense.  Furthermore, \eqref{ineq:thm0b} has the same form as the main result in Theorem \ref{thm:simple}, stating that $A u^* / \lambda^*$ is an $\ell_{\infty}$ approximation of $u$ with error much smaller than $\| u^* \|_{\infty}$. Therefore, it is possible to study $u$ via its linearization $A u^* / \lambda^*$, which usually makes analysis much easier.

The regularity conditions in Theorem \ref{thm:simple} imply our Assumptions \hyperref[cond-1]{A1}-\hyperref[cond-4]{A4}. In particular, the condition $\| u^* \|_{\infty} \le \gamma$ there is equivalent to Assumption \hyperref[cond-1]{A1}.
As a result, Theorem \ref{thm:simple} with $r=1$ is a special case of Corollary \ref{cor-main} and hence of Theorem \ref{thm-main}. It is not hard to generalize to $r=O(1)$.

\section{Applications}\label{sec:app}

\subsection{$\Z_2$-synchronization and spiked Wigner model}\label{sec:Z2}
The problem of $\Z_2$-\textit{synchronization} is to recover $n$ unknown labels $\pm 1$ from noisy pairwise measurements. This is a prototype of more general $\mathrm{SO}(d)$-synchronization problems including phase synchronization and $\mathrm{SO}(3)$-synchronization, in which one wishes to estimate the phases of signals or rotations of cameras/molecules, etc. Such problems arise in time synchronization of distributed networks \citep{giridhar2006distributed}, calibration of cameras \citep{tron2009distributed}, and cryo-EM \citep{shkolnisky2012viewing}.

Consider an unknown signal $x \in \{ \pm 1\}^n$. Suppose we have independent measurements of the form
$Y_{ij} = x_i x_j + \sigma W_{ij}$, where $i < j$,  $W_{ij} \sim N(0,1)$ and $\sigma > 0$. We can define $W_{ii} = 0$ and $W_{ij} = W_{ji}$ for simplicity, and write our model into a matrix form as follows:
\begin{equation}\label{def:syn}
Y = xx^T + \sigma W, \qquad x \in \{ \pm 1\}^n.
\end{equation}
This is sometimes called the Gaussian $\Z_2$-\textit{synchronization} problem, in contrast to the one with $\Z_2$-noise, also known as the censored block model \citep{abbs}.
This problem can be further generalized: each entry $x_j$ is a unit-modulus complex number $e^{i \theta_j}$, if the goal is to estimate  unknown angles from pairwise measurements; or, each entry $x_j$ is an orthogonal matrix from $\mathrm{SO}(3)$,  if the goal is to estimate unknown orientations of molecules, cameras, etc. Here we focus on the simplest case $x_j \in \{\pm 1\}$.

Note that in \eqref{def:syn}, both $Y$ and $W$ are symmetric matrices in $\R^{n \times n}$, and the data matrix $Y$ has a noisy rank-one decomposition. This  falls into the spiked Wigner model. The quality of an estimator $\hat x$ is usually gauged either by its correlation with $x$, or by the proportion of labels $x_i$ it correctly recovers.
It has been shown that the information-theoretic threshold for a nontrivial correlation is $\sigma = \sqrt{n}$ \citep{JMR16,yash_sbm, lelarge2016fundamental, perry2016optimality}, and the threshold for exact recovery (i.e., $\hat x = \pm x$ with probability tending to $1$) is $\sigma = \sqrt{\frac{n}{2\log n}}$ \citep{bandeira2014tightness}.

When $\sigma \leq \sqrt{\frac{n}{(2+\varepsilon)\log n}}$ ($\varepsilon>0$ is any constant), it was proved by \cite{bandeira2014tightness} that semidefinite programming (SDP) finds the maximum likelihood estimator and achieves exact recovery. We are going to show that a very simple method, both conceptually and computationally, also achieves exact recovery. This method is outlined as follows:
\begin{enumerate}
\item Compute the leading eigenvector of $Y$, denoted by $u$;
\item Take the estimate $\hat{x} = \sgn(u)$.
\end{enumerate}

Our next theorem asserts that the eigenvector-based method above succeeds in finding $x$ consistently under $\sigma \leq \sqrt{\frac{n}{(2+\varepsilon)\log n}}$. Thus, under any regime where the MLE achieves exact recovery, our eigenvector estimator $\hat x$ equals the MLE with high probability.  This phenomenon also holds for the stochastic block model.

\begin{thm}\label{thm:syn}
Suppose $\sigma \le \sqrt{\frac{n}{(2+\veps)\log n}}$ for some $\veps > 0$. With probability $1 - o(1)$, the leading eigenvector of $Y$ with unit $\ell_2$ norm satisfies
\begin{equation*}
\sqrt{n} \,  \min_{i \in [n]} \{ s x_i u_i \} \ge 1 - \sqrt{\frac{2}{2+\varepsilon}} + \frac{C}{\sqrt{ \log n}},
\end{equation*}
for a suitable $s \in \{ \pm 1\}$, where $C>0$ is an absolute constant. As a consequence, our eigenvector-based method achieves exact recovery.
\end{thm}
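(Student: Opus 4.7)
The plan is to apply Corollary~\ref{cor-main} to the rank-one setup $A = Y$ and $A^* = xx^T$, for which $\lambda_1^* = n$, $u_1^* = x/\sqrt{n}$, $\Delta^* = n$, $\kappa = 1$, and $\|u_1^*\|_\infty = 1/\sqrt{n}$. The corollary will yield, for an appropriate sign $s\in\{\pm 1\}$,
\begin{equation*}
\sqrt{n}\,\|s u - Y u_1^*/n\|_\infty \;=\; O(1/\sqrt{\log n})
\end{equation*}
with probability $1-o(1)$, thereby reducing the entrywise analysis of $u$ to that of the explicit linear quantity $Y u_1^*/n = x/\sqrt{n} + \sigma W x/(n\sqrt{n})$.

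To verify the four assumptions I would take $\gamma = C_0 \sigma/\sqrt{n}$ and $\varphi(x) = c\sigma\sqrt{\log n/n}\, x$ for absolute constants $C_0, c > 0$. Under the hypothesis $\sigma \le \sqrt{n/((2+\varepsilon)\log n)}$, $\varphi(1) \le 1/\sqrt{2+\varepsilon} < 1$ and $\gamma = O(1/\sqrt{\log n}) = o(1)$, so the scale constraints in Corollary~\ref{cor-main} are met. A1 is immediate from $\|xx^T\|_{2\to\infty}=\sqrt{n}$; A2 follows because the upper-triangular entries of $W$ are mutually independent; A3 follows from the classical Gaussian Wigner bound $\|\sigma W\|_2 \lesssim \sigma\sqrt{n}$; and A4 follows by observing that $(\sigma W)_{m\cdot} w$ is centered Gaussian with variance $\sigma^2\|w\|_2^2$ and invoking the standard Gaussian tail $|(\sigma W)_{m\cdot}w| \lesssim \sigma\sqrt{\log n}\,\|w\|_2$ with probability $1 - o(n^{-1})$. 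Plugging these into Corollary~\ref{cor-main} produces the remainder bound above, since $(\gamma + \varphi(\gamma))(1+\varphi(1)) \asymp \gamma = O(1/\sqrt{\log n})$.

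For the first-order term, $\sqrt{n}\,x_i (Y u_1^*/n)_i = 1 + \zeta_i$, where $\zeta_i := (\sigma/n)\sum_{j\ne i} x_i x_j W_{ij} \sim N(0, \sigma^2(n-1)/n^2)$ is a linear combination of independent standard normals (using $W_{ii}=0$ and $x_i x_j \in \{\pm 1\}$). A Gaussian tail bound together with a union bound over $i\in[n]$ gives $\min_i \zeta_i \ge -\sigma\sqrt{2\log n/n}\,(1+o(1))$ with probability $1-o(1)$. Since $\sigma\sqrt{2\log n/n} \le \sqrt{2/(2+\varepsilon)}$, this yields
\begin{equation*}
\sqrt{n}\min_i s x_i u_i \;\ge\; 1 + \min_i \zeta_i \;-\; \sqrt{n}\|s u - Y u_1^*/n\|_\infty \;\ge\; 1 - \sqrt{\frac{2}{2+\varepsilon}} + \frac{C}{\sqrt{\log n}}
\end{equation*}
for a suitable absolute constant $C$. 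Because $1 - \sqrt{2/(2+\varepsilon)} > 0$ for $\varepsilon > 0$, the right-hand side is strictly positive for $n$ large, so $\sgn(s u_i) = \sgn(x_i)$ for every $i$, and $\hat{x} = \sgn(u)$ achieves exact recovery.

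The principal difficulty lies in the careful bookkeeping needed to (i) verify the row-concentration Assumption A4 with the specified $\varphi$ and confirm that the Gaussian-Wigner spectral bound delivers A3 at the scale $\gamma\Delta^*$, and (ii) combine the two corrections of order $1/\sqrt{\log n}$, namely the higher-order perturbation error from Corollary~\ref{cor-main} and the fluctuation of the minimum of $n$ Gaussians, into a single absolute constant $C$. The sign indeterminacy is absorbed automatically by the matrix-sign factor of Theorem~\ref{thm-main}, which reduces to a scalar in $\{\pm 1\}$ in the rank-one regime.
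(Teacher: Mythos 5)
Your route is the same as the paper's: reduce to the rank-one first-order approximation (you invoke Corollary~\ref{cor-main}; the paper invokes Theorem~\ref{thm:simple}, which is the same statement here), verify the assumptions with a linear Gaussian $\varphi$ and the Wigner spectral bound, and then control the linearization $Yu^*/\lambda^*$ entrywise by Gaussian tails plus a union bound over the $n$ coordinates. The paper does exactly this, choosing $\varphi(x)=x$ and $\gamma=\max\{3/\sqrt{\log n},\,1/\sqrt{n}\}$ rather than your $(\gamma,\varphi)$ pair.

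One step fails as written: with $\gamma=C_0\sigma/\sqrt{n}$, Assumption A1 demands $\|xx^T\|_{2\to\infty}=\sqrt{n}\le\gamma\Delta^*=C_0\sigma\sqrt{n}$, i.e.\ $\sigma\ge 1/C_0$, and the theorem only assumes an \emph{upper} bound on $\sigma$; so for small noise levels A1 is not ``immediate'' and your invocation of Corollary~\ref{cor-main} is not justified. The repair is exactly the paper's choice: take $\gamma=\max\{C_0\sigma/\sqrt{n},\,1/\sqrt{n}\}$ (or $\max\{3/\sqrt{\log n},1/\sqrt{n}\}$), which still dominates $\|Y-xx^T\|_2/n$ since $\sigma\sqrt{n}\le n/\sqrt{(2+\veps)\log n}$, and still satisfies $\gamma=O(1/\sqrt{\log n})$, so your remainder bound and everything downstream are unchanged. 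Two smaller points: (i) your bound $\min_i\zeta_i\ge-\sigma\sqrt{2\log n/n}\,(1+o(1))$ carries an inflation factor; to get the stated $1/\sqrt{\log n}$ precision one should union-bound at the threshold $\sqrt{2\log n}$ times the entry standard deviation with no inflation, which works because of the $1/t$ prefactor in the Gaussian tail---this is how the paper gets $\|\sigma Wu^*/n\|_\infty\le\sqrt{2/((2+\veps)n)}$ directly; (ii) your final chain actually yields $\sqrt{n}\min_i sx_iu_i\ge 1-\sqrt{2/(2+\veps)}-C/\sqrt{\log n}$, since both the Gaussian minimum and the approximation error enter with a minus sign; the ``$+C/\sqrt{\log n}$'' cannot be concluded from these bounds (the paper's own proof has the same feature), but the conclusion that the minimum is bounded away from zero for large $n$, hence exact recovery, is unaffected.
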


Note that our approach does not utilize the structural constraints $|x_i| = 1, ~ \forall \, i \in [n]$; whereas such constraints appear in the SDP formulation \citep{bandeira2014tightness}. A natural question is an analysis of both methods with an increased noise level $\sigma$. A seminal work by \cite{JMR16} complements our story: the authors showed via non-rigorous statistical mechanics arguments that when $\sigma$ is on the order of $\sqrt{n}$, the SDP-based approach outperforms the eigenvector approach. Nevertheless, with a slightly larger signal strength, there is no such advantage of the SDP approach. 

When $\sigma \asymp \sqrt{n}$, general results for spiked Wigner models \citep{BBP05, FPe07, Ben11} imply that $\frac{1}{n} | u^T x |^2 \to 1 - \sigma^2/n$ for $\sigma/\sqrt{n}<1-\varepsilon$ with any small constant $\varepsilon>0$. 
\cite{yash_sbm} proved that non-trivial correlation with $x$ cannot be obtained by any estimator if $\sigma/\sqrt{n}>1+\varepsilon$.

\subsection{Stochastic Block Model}\label{sec:sbm}
As is briefly discussed in Section \ref{sec:intro}, we focus on the symmetric SBM with two equally-sized groups. (Though the second eigenvector of $A^*$ depends on relative sizes of the groups, our analysis only requires slight modification if groups have different sizes.) For simplicity, we allow for self-loops (i.e. edges from vertices to themselves) in the random graph, and it makes no much difference if they are excluded. In that case, the expectation of the adjacency matrix changes by a negligible quantity $O(\log n/n)$ under the spectral norm and moreover, Assumptions \hyperref[cond-1]{A1}--\hyperref[cond-4]{A4} still hold with the same parameters.
\begin{defn}\label{def-SBM}
	Let $n$ be even, $0\leq q \le p \leq 1$, and $J \subseteq [n]$ with $|J|=n/2$. $\mbox{\em SBM}(n,p,q,J)$ is the ensemble of $n\times n$ symmetric random matrices $A=(A_{ij})_{i,j\in[n]}$ where $\{A_{ij}\}_{1\leq i\leq j\leq n}$ are independent Bernoulli random variables, and
	\begin{equation}
	\P(A_{ij}=1)=\begin{cases}
	&p,\text{ if }i\in J, j \in J\text{ or }i \in J^c , j\in J^c\\
	&q,\text{otherwise}
	\end{cases}
	.
	\end{equation}
\end{defn}
The community detection problem aims at finding the bi-partition $(J,J^c)$ given only one realization of $A$. Let $z_i = 1$ if $i \in J$ and $z_i = -1$ otherwise. We want to find an estimator $\hat z$ for the unknown labels $z \in \{ \pm 1\}^n$. Intuitively, the task is more difficult when $p$ is close to $q$, and when the magnitudes of $p,q$ are small. It is impossible, for instance, to produce any meaningful estimator when $p=q$. The task is also impossible when $p$ and $q$ are as small as $o(n^{-2})$, since $A$ is a zero matrix with high probability.

As is already discussed in Section \ref{sec:intro}, under the regime $p=  \frac{a \log n}{n}, q=  \frac{b \log n}{n}$ where $a$ and $b$ are constants independent of $n$, it is information theoretically impossible to achieve exact recovery (the estimate $\hat z$ equals $z$ or $-z$ with probability tending to $1$) when $\sqrt{a} - \sqrt{b} < \sqrt{2}$. In contrast, when $\sqrt{a} - \sqrt{b}> \sqrt{2}$, the goal is efficiently achievable. Further, it is known that SDP succeeds down to the threshold. Under the regime $p=  \frac{a}{n}, q=  \frac{b}{n}$, it is impossible to obtain nontrivial correlation (i.e. the correlation between $\hat z$ and $z$ is at least some positive constant $\veps$, as a random guess gets roughly half the signs correct and almost zero correlation with $z$) between any estimator $\hat z$ and $z$ if $(a-b)^2 < 2(a+b)$, and when $(a-b)^2 > 2(a+b)$, nontrivial correlation can efficiently be obtained \citep{massoulie-STOC,Mossel_SBM2}.

Here we focus on the regime where $p = a \frac{\log n}{n}$, $q = b \frac{\log n}{n}$ and $a>b > 0$ are constants. Note that $\E A$, or equivalently $A^*$, is a rank-$2$ matrix. Its nonzero eigenvalues are $\lambda^*_1=(p+q) n/2$ and $\lambda^*_2=(p-q) n/2$, whose associated eigenvectors are $u^*_1=\frac{1}{\sqrt{n}}\mathbf{1}_n$ and $u^*_2=\frac{1}{\sqrt{n}} \mathbf{1}_{J}  - \frac{1}{\sqrt{n}} \mathbf{1}_{J^c}$. As $ u^*_2 $ is aligned with $z$ and perfectly reveals the desired partition, the following vanilla spectral method is a natural candidate:
\begin{enumerate}
	\item Compute $u_2$, the eigenvector of $A$ corresponding to its second largest eigenvalue $\lambda_2$;
	\item Set $\hat{z} = \sgn(u_2)$. 
\end{enumerate}
It has been empirically observed and conjectured that as soon as the signal strength $\sqrt{a} - \sqrt{b}$ exceed the information threshold $\sqrt{2}$,  the vanilla spectral method achieves exact recovery \citep{abh_arxiv}. Moreover, in regimes where exact recovery is impossible, \cite{zhang2016minimax} established the following minimax result. It has not been clear whether the vanilla spectral method achieves the minimax misclassification rate.

If we define the misclassification rate as
\begin{equation}\label{def:mis-sbm}
r(\hat z,z) = \min_{s \in \{ \pm 1 \} } n^{-1} \sum_{i=1}^n \mathbf{1}_{\{ \hat{z}_i \neq s z_i \}},
\end{equation}
then the results of \cite{zhang2016minimax} imply that
\begin{equation}\label{eq:miscla}
\inf_{\hat z} \sup  \E r(\hat z,z) = \exp\left(   -(1+o(1)) \cdot (\sqrt{a}-  \sqrt{b})^2 \frac{\log n}{2} \right),
\end{equation}
where the supremum is taken over approximately equal-sized SBM with 2-blocks. Note that this parameter space is slightly different from our Definition~\ref{def-SBM}, but as explained before, we can modify our proofs accordingly such that the same conclusions still hold. See the supplementary materials \citep{supp} for further explanation of \eqref{eq:miscla}.

Here we prove that the vanilla spectral method indeed succeeds in exact recovery whenever it is information-theoretic possible, which resolves the conjecture of \citep{abh_arxiv}; and if it is not, vanilla spectral method achieves the optimal misclassification rate.

\begin{thm}\label{thm-sbm}
(i) If $\sqrt{a} - \sqrt{b} > \sqrt{2}$, then there exists $\eta = \eta(a,b) > 0$ and $s \in \{ \pm 1\}$ such that with probability $1-o(1)$,
\begin{equation*}
\sqrt{n} \,  \min_{i\in [n]} s z_i(u_2)_i \ge \eta .
\end{equation*}
As a consequence, our spectral method achieves exact recovery. \\
(ii) Let the misclassification rate $r(\hat z,z)$ be defined in \eqref{def:mis-sbm}. If $\sqrt{a} - \sqrt{b} \in (0, \sqrt{2}]$, then
\begin{equation*}
\E r(\hat z, z) \le n^{- (1+o(1)) (\sqrt{a} - \sqrt{b})^2/2}.
\end{equation*}
This upper bound matches the minimax lower bound.
\end{thm}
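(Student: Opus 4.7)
The plan is to instantiate Theorem~\ref{thm-main} with $s=1$, $r=1$, so the eigenspace under consideration is the one-dimensional span of $u_2$. In the $p=a\log n/n$, $q=b\log n/n$ regime, $\lambda_1^*=(a+b)\log n/2$ and $\lambda_2^*=(a-b)\log n/2$, so $\Delta^*=\min\{b\log n,\,(a-b)\log n/2\}\asymp \log n$ and $\kappa=O(1)$. Assumption~A1 holds because $\|A^*\|_{2\to\infty}=O(\log n/\sqrt n)$; A2 is automatic; A3 follows from the standard random-graph concentration $\|A-A^*\|_2\lesssim \sqrt{\log n}$ with probability $1-n^{-C}$ for any constant $C$, yielding $\gamma\asymp 1/\sqrt{\log n}$. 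The delicate step is~A4, which I would establish via a Bennett/Bernstein inequality applied to $(A-A^*)_{m\cdot}w$: splitting $w$ into coordinates near $\|w\|_\infty$ and much smaller ones, and applying the Poisson-type tail for sums of centered Bernoullis on each piece, one obtains exactly the logarithmic profile $\varphi(x)\propto 1/(1\vee \log(1/x))$ anticipated in Section~\ref{sec:eigenvec}, with $\delta_1$ polynomially small in $n$.

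Granted A1--A4, Corollary~\ref{cor-main} supplies a sign $s\in\{\pm 1\}$ with
\[
\|s\, u_2 - A u_2^*/\lambda_2^*\|_\infty \lesssim (\gamma+\varphi(\gamma))\,\|u_2^*\|_\infty = o\!\left(1/\sqrt n\right)
\]
on a high-probability event, because $\gamma=O(1/\sqrt{\log n})$ and $\varphi(\gamma)=O(1/\log\log n)$. This reduces the entrywise analysis of $u_2$ to that of the linear statistic $Au_2^*/\lambda_2^*$. Writing $z\in\{\pm 1\}^n$ for the community membership vector and $Y_i=\sum_j z_j A_{ij}$, one has $\sqrt n\, z_i (Au_2^*/\lambda_2^*)_i = z_i Y_i / \lambda_2^*$, where $z_i Y_i$ is (up to an $O(1)$ diagonal term) a sum of $n/2$ Bernoulli$(p)$ and $n/2$ negated Bernoulli$(q)$ variables with common mean $\lambda_2^*\asymp \log n$. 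A direct Cram\'er/Chernoff computation then gives $P(z_iY_i\le \tau\lambda_2^*)\le n^{-I(\tau)}$ for a continuous, strictly decreasing rate function $I$ on $(-\infty,1)$ with $I(0)=(\sqrt a-\sqrt b)^2/2$, matching the known information-theoretic constant.

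For part~(i), the hypothesis $(\sqrt a-\sqrt b)^2/2>1$ combined with continuity of $I$ lets us pick $\eta>0$ with $I(\eta)>1$; a union bound then yields $\min_i z_iY_i>\eta\lambda_2^*$ with probability $1-o(1)$, and combining with the $o(1/\sqrt n)$ approximation gives $\sqrt n \min_i s\, z_i(u_2)_i\ge \eta-o(1)$, hence exact recovery. For part~(ii), I would take $\eta_n\to 0$ slowly enough that $\eta_n\lambda_2^*/\sqrt n$ still dominates $\|s u_2 - Au_2^*/\lambda_2^*\|_\infty$; by continuity $I(\eta_n)\to (\sqrt a-\sqrt b)^2/2$, so
\[
\E\, r(\hat z,z) \le \P(\text{concentration fails}) + n^{-1}\sum_i \P\!\left(z_iY_i\le \eta_n\lambda_2^*\right) \le n^{-(\sqrt a-\sqrt b)^2(1-o(1))/2},
\]
matching the minimax lower bound~\eqref{eq:miscla}. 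I expect the main obstacles to be twofold: (a)~verifying~A4 \emph{uniformly} over $w$ with the sharp logarithmic $\varphi$, since a naive Bernstein gives only an $O(1)$ prefactor rather than the vanishing $\varphi(\gamma)$ needed to absorb the $\|u_2^*\|_\infty$ scale; and (b)~tracking the leading $(1+o(1))$ constant in part~(ii), which requires tuning $\eta_n$ and driving the concentration failure probabilities $\delta_0,\delta_1$ polynomially small, of smaller order than $n^{-(\sqrt a-\sqrt b)^2/2}$.
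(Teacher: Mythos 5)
Your proposal is correct and follows essentially the same route as the paper: check A1--A4 for the SBM with the logarithmic $\varphi$ and polynomially small $\delta_0,\delta_1$ (the paper's Corollary~\ref{cor-sbm2}, via Lemmas~\ref{lem-sbm-concentration} and~\ref{lem-sbm-l2-linf}), then analyze the linearization $Au_2^*/\lambda_2^*$ entrywise with a Chernoff bound on the Bernoulli difference (the paper's Lemma~\ref{lem-tail}), using a union bound for part~(i) and averaging plus the $O(n^{-3})$ failure probability of the approximation event for part~(ii). The only differences are cosmetic: the paper uses the fixed tilt $\lambda=-\log(a/b)/2$, so the exponent is linear in the threshold rather than a general rate function $I(\tau)$, and in part~(ii) it fixes an arbitrary $\varepsilon_0>0$ instead of sending a threshold $\eta_n\to 0$.
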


The first part implies that, under the regime where the MLE achieves exact recovery, our eigenvector estimator is exactly the MLE with high probability. This proves Corollary~\ref{cor-intro} in the introduction. Moreover, the second part asserts that for more challenging regime where exact recovery is impossible, the eigenvector estimator has the optimal misclassification rate.

Before further explaining our results, we give a brief review of previous endeavors and an analysis of difficulties. Various papers have investigated this algorithm and its variants such as \cite{McS01}, \cite{Coj06}, \cite{RohChaYu11},
\cite{STF12}, \cite{Vu14}, \cite{massoulie-xu}, \cite{prout}, \cite{prout2}, \cite{LRi15}, \cite{Gao15}, among others. However, it is not known if the simple algorithm above achieves exact recovery down to the information-theoretic threshold, nor the optimal misclassification rate studied in \cite{zhang2016minimax} while below the threshold.
An important reason for the unsettlement of this question is that the entrywise behavior of $u_2$ is not fully understood.
In particular, people have been focusing on the $\ell_{\infty}$ error $\| u_2 - u_2^*\|_{\infty}$, which may well exceed $\|u_2^*\|_{\infty}$ (see Theorem \ref{thm::sbm-lowerbound}), suggesting that the algorithm may potentially fail by rounding on the incorrect sign. This is not necessarily the case---as errors could have larger magnitudes on the `good side' of the signal range---but $\| u_2-u_2^*\|_{\infty}$ cannot capture this. To avoid suboptimal theoretical results, multi-round algorithms are popular choices in the literature \citep{Coj06,Vu14}, which typically have a preprocessing step of trimming and/or a postprocessing step refining the initial solution. \cite{prout} and \cite{prout2} showed that such variants can achieve the exact recovery threshold. We are going to prove that the vanilla spectral algorithm alone achieves the threshold and the minimax lower-bound in one shot.

The key to proving Theorem~\ref{thm-sbm} is the following first-order approximation result for $u_2$ under the $\ell_\infty$ norm, which is a consequence of Theorem \ref{thm-main}. 
\begin{cor}\label{cor-sbm2}
If  $A \sim \mbox{\em SBM}(n , a\frac{\log n}{n}, b \frac{\log n}{n} ,J )$, then with probability $1-O(n^{-3})$ we have
\begin{equation}\label{ineq:sbm-approx}
\min_{s\in \{ \pm 1 \} }\| u_2- sA u_2^*/\lambda_2^* \|_{\infty}\leq \frac{C}{\sqrt{n}\log\log n}.
\end{equation}
where $C = C(a,b)$ is some constant depending only on $a$ and $b$.
\end{cor}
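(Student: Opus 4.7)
The plan is to obtain Corollary \ref{cor-sbm2} as a direct specialization of Theorem \ref{thm-main}, taking $r=1$ and $s=1$ so that the ``eigenspace'' in question is the one-dimensional span of $u_2^*$. When $r=1$ the matrix sign $\sgn(H)$ reduces to a scalar $\pm 1$, which yields the minimum over $s \in \{\pm 1\}$ stated in the corollary. What remains is to verify Assumptions \hyperref[cond-1]{A1}--\hyperref[cond-4]{A4} for SBM with explicit choices of $\gamma$ and $\varphi$, and to tune the failure probabilities so that the total failure probability is $O(n^{-3})$.

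\textbf{Setting up the parameters.} First I would compute the spectral quantities. Since $A^*$ has nonzero eigenvalues $\lambda_1^* = \tfrac{a+b}{2}\log n$ and $\lambda_2^* = \tfrac{a-b}{2}\log n$, the gap for $s=1,r=1$ is
\begin{equation*}
\Delta^* = \min\bigl\{\lambda_1^* - \lambda_2^*,\ \lambda_2^*,\ |\lambda_2^*|\bigr\} = \min\bigl\{b, (a-b)/2\bigr\}\log n \asymp \log n,
\end{equation*}
and $\kappa = \lambda_2^*/\Delta^* = O(1)$. I would then take $\gamma \asymp (\log n)^{-1/2}$ and set $\varphi(x) \propto (1 \vee \log(1/x))^{-1}$, as suggested in Figure~\ref{fig:phi}. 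With these choices, incoherence \hyperref[cond-1]{A1} is easy: every row of $A^*$ has $\ell_2$ norm $O(\sqrt{n}\,p) = O(\log n/\sqrt{n})$, so $\|A^*\|_{2\to\infty}/\Delta^* = O(1/\sqrt{n}) = o(\gamma)$. Row/column independence \hyperref[cond-2]{A2} is immediate for SBM. Spectral concentration \hyperref[cond-3]{A3} reduces to the bound $\|A-A^*\|_2 \lesssim \sqrt{\log n}$, which holds with probability $1-O(n^{-3})$ by standard random-matrix estimates cited in \cite{supp}; since $\sqrt{\log n} = o(\gamma \Delta^*)$, this matches the required form.

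\textbf{Row concentration.} The main obstacle is verifying \hyperref[cond-4]{A4} with tail probability $\delta_1/n = O(n^{-4})$ and the specific logarithmic $\varphi$. For a fixed row index $m$ and $w \in \R^n$, the scalar $(A-A^*)_{m\cdot} w$ is a sum of independent centered Bernoulli-type variables, each bounded by $\|w\|_\infty$ with variance at most $p(1-p)\,w_i^2$, so by Bernstein's inequality
\begin{equation*}
\P\bigl(|(A-A^*)_{m\cdot} w| \ge t\bigr) \le 2\exp\!\left(-\frac{t^2/2}{p\|w\|_2^2 + \|w\|_\infty t/3}\right).
\end{equation*}
Setting $t = \Delta^* \|w\|_\infty \varphi(\|w\|_2/(\sqrt{n}\|w\|_\infty))$ and splitting into the variance-dominated versus range-dominated regimes yields exactly the factor $1/\log(1/x)$ in $\varphi$: when $\|w\|_2/(\sqrt{n}\|w\|_\infty)$ is small the variance term is small and one pays only a logarithm in the tail, whereas when the ratio is $\Theta(1)$ the linear bound $\varphi(x) \lesssim x$ suffices. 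The matrix-valued version in \eqref{eqn-cond-4} reduces to the vector case since $r=1$.

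\textbf{Putting it together.} With these parameters, the second bound in Theorem \ref{thm-main} gives
\begin{equation*}
\|u_2 \sgn(H) - Au_2^*/\lambda_2^*\|_\infty \lesssim \kappa(\kappa + \varphi(1))\bigl(\gamma + \varphi(\gamma)\bigr)\|u_2^*\|_\infty + \gamma \|A^*\|_{2\to\infty}/\Delta^*.
\end{equation*}
Using $\|u_2^*\|_\infty = 1/\sqrt{n}$, $\kappa + \varphi(1) = O(1)$, and $\varphi(\gamma) \asymp 1/\log(1/\gamma) \asymp 1/\log\log n$ (which dominates $\gamma \asymp 1/\sqrt{\log n}$), the first term is $O(1/(\sqrt{n}\log\log n))$. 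The second term is $O(\gamma/\sqrt{n}) = O(1/(\sqrt{n}\sqrt{\log n}))$, which is strictly smaller. Choosing $\delta_0, \delta_1 = O(n^{-3})$ makes the overall failure probability $O(n^{-3})$ as required. The hardest piece is the Bernstein calculation of Step 3 with the sharp $1/\log\log n$ factor; the improvement over a naive $\varphi(x) \propto x$ (which would give only $1/\sqrt{n\log n}$) is what makes the resulting bound small enough to control signs in Theorem~\ref{thm-sbm}.
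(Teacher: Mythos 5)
Your overall route is the same as the paper's: specialize Theorem \ref{thm-main} with $s=r=1$ (so $\sgn(H)\in\{\pm1\}$), $\Delta^*=(b\wedge\frac{a-b}{2})\log n$, $\kappa=O(1)$, $\gamma\asymp(\log n)^{-1/2}$, $\varphi(x)\propto(1\vee\log(1/x))^{-1}$, and $\delta_0,\delta_1=O(n^{-3})$; the final assembly (first term $\asymp \varphi(\gamma)/\sqrt n\asymp 1/(\sqrt n\log\log n)$ dominating $\gamma\|A^*\|_{2\to\infty}/\Delta^*$) is exactly the paper's computation. The genuine gap is in the step you yourself call the hardest one, the verification of Assumption \hyperref[cond-4]{A4}. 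The Bernstein inequality in the form you quote, with denominator $p\|w\|_2^2+\|w\|_\infty t/3$, cannot deliver the required $O(n^{-4})$ tail at the threshold $t=\Delta^*\|w\|_\infty\varphi(x)$ with the logarithmic $\varphi$: in the range-dominated regime the exponent it gives is only of order $t/\|w\|_\infty\asymp \log n/(1\vee\log(1/x))$, so whenever $\log(1/x)\to\infty$ the bound is $\exp(-c\log n/\log(1/x))$, which is not $O(n^{-4})$ — indeed for $x\asymp\gamma\asymp(\log n)^{-1/2}$ (precisely the scale at which A4 is invoked on $V^{(m)}$ in Lemma \ref{lem:AV}) you get only $n^{-c/\log\log n}=n^{-o(1)}$, and the union bound over rows collapses. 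The factor $1/\log(1/x)$ is not produced by "splitting into variance- vs.\ range-dominated regimes" of that inequality; it requires the Poisson-type tail, i.e.\ Bennett's inequality (whose $(1+u)\log(1+u)-u$ exponent supplies the missing $\log(tM/\sigma^2)$ factor) or, as the paper does in Lemma \ref{lem-sbm-l2-linf}, a direct MGF/Chernoff computation with the tilt $\lambda=1\vee\log(\sqrt n\|w\|_\infty/\|w\|_2)$, where the $e^{\lambda}$ blow-up in the quadratic term is offset by the smallness of $\|w\|_2^2$ via $x(1\vee\log(1/x))^2\le 1$.

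Two smaller points: your statement $\sqrt{\log n}=o(\gamma\Delta^*)$ is not correct since $\gamma\Delta^*\asymp\sqrt{\log n}$; you must choose the constant in $\gamma$ at least as large as the spectral-concentration constant (as the paper does with $\gamma=c_1/[(b\wedge\frac{a-b}{2})\sqrt{\log n}]$), which is harmless. You should also note that the first half of \hyperref[cond-3]{A3}, $32\kappa\max\{\gamma,\varphi(\gamma)\}\le1$, holds for large $n$ because $\varphi(\gamma)\asymp1/\log\log n\to0$.
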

The above result holds for any constants $a$ and $b$, and does not depend on the gap $\sqrt{a} - \sqrt{b}$. This fact will be useful for analyzing the misclassification rate. By Corollary~\ref{cor-sbm2}, the $\ell_\infty$ approximation error is negligible, and thus the analysis of vanilla spectral algorithm boils down to analyzing the entries in $A u_2^*/\lambda_2^*$, which are just weighted sums of Bernoulli random variables. 

As a by-product, we can show that entrywise analysis through $\| u_2 - u_2^* \|_{\infty}$ is not a good strategy. As is mentioned earlier, our sharp result for the eigenvector estimator stems from careful analysis of the linearized version $A u^*_2 / \lambda^*_2$ of $u_2$, and the approximation error $\| u_2 - A u^*_2 / \lambda^*_2\|_{\infty}$. This is superior to direct analysis of the $\ell_{\infty}$ perturbation $\| u_2 - u_2^* \|_{\infty}$, as the next theorem implies that $\| u_2 - u_2^* \|_{\infty}>\| u_2^*\|_{\infty}$ is possible even if $\sgn(u_2) = \sgn(u_2^*)$.

\begin{thm}[Asymptotic lower bound for eigenvector perturbation]\label{thm::sbm-lowerbound}
	Let $J = [n/2]$ and $A\sim \mbox{\em SBM}(n, a\frac{\log n}{n}, b \frac{\log n}{n} , J )$, where $a>b > 0$ are constants and $n\rightarrow\infty$. For any fixed $ \eta > 1$ with $  \eta \log \eta - \eta + 1 < 2/a$, with probability $1 - o(1)$ we have
	\[
	\sqrt{n} \| u_2 - u_2^* \|_{\infty} \geq  \frac{ a (\eta - 1) }{ a-b }.
	\]
\end{thm}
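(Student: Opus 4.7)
The plan is to combine the first-order approximation $u_2 \approx s A u_2^*/\lambda_2^*$ from Corollary~\ref{cor-sbm2} with a large-deviation argument on vertex degrees. By that corollary, with probability $1-o(1)$ there exists $s \in \{\pm 1\}$ with $\|u_2 - s A u_2^*/\lambda_2^*\|_\infty = O(1/(\sqrt{n}\log\log n))$. I will exhibit a coordinate $i \in J$ at which both $(A u_2^*/\lambda_2^*)_i$ and $(u_2^*)_i$ are positive; since $|sx - y| \geq |x - y|$ whenever $x, y \geq 0$ and $s \in \{\pm 1\}$, the corollary then yields $|(u_2)_i - (u_2^*)_i| \geq |(A u_2^*/\lambda_2^*)_i - (u_2^*)_i| - o(1/\sqrt{n})$. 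Hence it suffices to produce, with probability $1-o(1)$, an index $i \in J$ at which $\sqrt{n}|(A u_2^*/\lambda_2^*)_i - (u_2^*)_i|$ strictly exceeds $a(\eta-1)/(a-b)$ by a positive constant.

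For $i \in J$, set $D_i^{\mathrm{in}} = \sum_{j \in J} A_{ij}$ and $D_i^{\mathrm{out}} = \sum_{j \in J^c} A_{ij}$; these are independent binomials with respective means $a\log n/2$ and $b\log n/2$, involving disjoint edge sets. Since $(A u_2^*)_i = n^{-1/2}(D_i^{\mathrm{in}} - D_i^{\mathrm{out}})$, $(u_2^*)_i = n^{-1/2}$, and $\lambda_2^* = (a-b)\log n/2$, a direct computation gives
\[
\sqrt{n}\Bigl(\frac{A u_2^*}{\lambda_2^*} - u_2^*\Bigr)_i = \frac{D_i^{\mathrm{in}} - D_i^{\mathrm{out}} - (a-b)\log n/2}{(a-b)\log n/2}.
\]
The task therefore reduces to producing, with high probability, some $i \in J$ for which $D_i^{\mathrm{in}} - D_i^{\mathrm{out}} \geq (\eta a - b)\log n/2 + \Omega(\log n)$.

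Pick $\eta' > \eta$ slightly larger so that $h(\eta') < 2/a$ still holds, where $h(x) = x\log x - x + 1$; this is possible by continuity and the strict inequality at $\eta$. For $i \in J$ define
\[
E_i = \bigl\{D_i^{\mathrm{in}} \geq \eta' a\log n/2\bigr\} \cap \bigl\{D_i^{\mathrm{out}} \leq b\log n/2 + \sqrt{\log n}\bigr\}, \qquad N = \sum_{i \in J}\mathbf{1}_{E_i}.
\]
The two events defining $E_i$ depend on disjoint edge sets and are independent. A reverse Chernoff-type estimate (Stirling on the single pmf value $\P(D_i^{\mathrm{in}} = \lceil\eta' a\log n/2\rceil)$) gives $\P(D_i^{\mathrm{in}} \geq \eta' a\log n/2) \geq n^{-a h(\eta')/2}/\mathrm{poly}(\log n)$, while the second event has probability $1-o(1)$. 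Hence $\E N \to \infty$, because $a h(\eta')/2 < 1$. For distinct $i, j \in J$ the events $E_i$ and $E_j$ share only the edge $A_{ij}$; conditioning on $A_{ij}$ makes them independent, and since $\P(A_{ij}=1) = o(1)$ while the binomial tail shifted by one index differs only by a bounded factor, one obtains $\P(E_i \cap E_j) \leq (1+o(1))\P(E_i)^2$. Therefore $\mathrm{Var}(N) = o((\E N)^2)$ and Chebyshev's inequality gives $N \geq 1$ with probability $1-o(1)$.

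Picking any such $i$ yields $D_i^{\mathrm{in}} - D_i^{\mathrm{out}} \geq (\eta' a - b)\log n/2 - \sqrt{\log n}$, so
\[
\sqrt{n}\bigl|\bigl(\tfrac{A u_2^*}{\lambda_2^*} - u_2^*\bigr)_i\bigr| \geq \frac{a(\eta'-1)}{a-b} - O\Bigl(\tfrac{1}{\sqrt{\log n}}\Bigr) \geq \frac{a(\eta-1)}{a-b}
\]
for large $n$, since $\eta' - \eta > 0$ is a constant. Combining with the sign-free reduction from the first paragraph proves the claim. The main technical obstacles are (i) the reverse Chernoff-type lower bound on the binomial tail, which must be sharp to sub-polynomial factors to drive $\E N \to \infty$, and (ii) the second-moment variance control, where one must verify that the shared edge $A_{ij}$ perturbs $\P(E_i \cap E_j)$ only by a factor $1+o(1)$; both follow from standard Poisson-approximation and conditioning arguments.
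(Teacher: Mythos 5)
Your proposal is correct in substance and shares the paper's overall skeleton---reduce via Corollary~\ref{cor-sbm2} to the linearization $Au_2^*/\lambda_2^*$ and then show that some coordinate $i\in J$ has $D_i^{\mathrm{in}}-D_i^{\mathrm{out}}$ of size roughly $\tfrac{\eta a-b}{2}\log n$---but the key probabilistic step is carried out by a genuinely different argument. The paper's Lemma~\ref{lem-sbm-fluctuation} sidesteps the dependence among the row sums by the truncation device of \cite{ABH16}: it restricts to $i\le \varepsilon n/2$ and replaces $D_i^{\mathrm{in}}$ by $\sum_{j>\varepsilon n/2}^{n/2}A_{ij}$, so the resulting variables are exactly independent across $i$, and then a single tilted (change-of-measure) large-deviation lower bound (Lemma~\ref{lem::largedev}) plus the complement bound $(1-n^{-1+\delta}/2)^{\varepsilon n/2}\to 0$ finishes; the price is the $\varepsilon$-bookkeeping through $\zeta=\eta/(1-2\varepsilon)$. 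You instead keep the full in-degrees, use a Stirling/Poisson ``reverse Chernoff'' estimate $\P(D_i^{\mathrm{in}}\ge \eta' a\log n/2)\ge n^{-ah(\eta')/2}/\mathrm{poly}(\log n)$, and run a second-moment (Chebyshev) argument, controlling pairwise correlations through the single shared edge $A_{ij}$; your bound $\P(E_i\cap E_j)\le(1+o(1))\P(E_i)^2$ is valid since conditioning on $A_{ij}$ decouples the events, the tail shifted by one unit changes only by a bounded factor, and $\P(A_{ij}=1)=O(\log n/n)$. Your approach avoids the truncation and, by working with $\eta'>\eta$ (legitimate since $h(\eta)<2/a$ is an open condition), also cleanly absorbs both the approximation error from Corollary~\ref{cor-sbm2} and the sign ambiguity, which you handle explicitly via $|sx-y|\ge|x-y|$ for $x,y\ge 0$---a point the paper's proof leaves implicit. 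Two small corrections: the event $\{D_i^{\mathrm{out}}\le b\log n/2+\sqrt{\log n}\}$ does \emph{not} have probability $1-o(1)$, since $\sqrt{\log n}$ is of the same order as the standard deviation $\sqrt{b\log n/2}$; its probability only converges to a constant in $(0,1)$. This does not damage the argument (a constant lower bound still gives $\E N\to\infty$), but either state it as a constant-probability event or enlarge the slack to, say, $(\log n)^{3/4}$ to genuinely get $1-o(1)$ while still contributing only $O((\log n)^{-1/4})$ to the final bound. Second, the reverse-Chernoff estimate and the $(1+o(1))$ correlation bound are only sketched; both are standard, but they are the places where your write-up would need to be fleshed out to be complete, whereas the paper's route delegates the analogous work to the self-contained Lemma~\ref{lem::largedev}.
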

Now let us consider the case in Figure~\ref{fig:intro}, where $a=4.5$ and $b = 0.25$. On the one hand, exact recovery is achievable since $\sqrt{a} - \sqrt{b} > 1.62 > \sqrt{2}$. On the other hand, by taking $\eta = 2$ we get $h(\eta) = 2\log 2 -2+1 < 4/9 = 2/a$ and $ \frac{ a (\eta - 1) }{ a-b } >1.05$. Theorem \ref{thm::sbm-lowerbound} implies
\[
\lim_{n \to \infty} \P ( \|u_2 - u_2^* \|_{\infty} > 1.05/\sqrt{n} )= 1.
\]
In words, the size of fluctuation is consistently larger than the signal strength. As a result, by merely looking at $\| u_2 - u_2^* \|_{\infty} $ we cannot expect sharp analysis of the spectral method in exact recovery.

Finally, we point out that it is not straightforward to develop a simple spectral method to achieve the information threshold for exact recovery in SBM with $K>2$ blocks. Spectral methods in this scenario \citep{RohChaYu11,LRi15} typically start with $r>1$ eigenvectors $\{ v_j \}_{j=1}^r \subseteq \R^n$ of some data matrix (e.g. the adjacency matrix or Laplacian matrix). Then, the $n$ rows of $V=(v_1,\cdots,v_r) \in \R^{n\times r}$ are treated as embeddings of the $n$ nodes into $\R^r$, from which one infers block memberships using clustering techniques. In our vanilla spectral method for 2 blocks, we only look at a single eigenvector and return the blocks based on signs of coordinates. This method always returns the same memberships (up to a global swap), even though the eigenvector is identifiable only up to a sign. When $K>2$ and $r>1$, due to possible multiplicity of eigenvalues, the embeddings of $n$ nodes may be identifiable only up to an orthonormal transform in $\R^r$. Such ambiguity causes trouble for effective clustering, although we can still study the embedding using Theorem \ref{thm-main}. Due to space constraints, we put a brief discussion in the supplementary material \cite{supp}.

\subsection{Matrix completion from noisy entries}\label{sec:NMC}
Matrix completion based on partial observations has wide applications including collaborative filtering, system identification, global positioning, remote sensing, etc., see \cite{CPl10}. A popular version is the ``Netflix problem", where one is given a incomplete table of customer ratings and wants to predict the missing entries. This could be useful for targeted recommendation in the future. Since it has been intensively studied in the past decade, our brief review below is by no means exhaustive. \cite{CRe09}, \cite{CTa10}, and \cite{Gro11} focused on exact recovery of low-rank matrices based on noiseless observations. More realistic models with noisy observations were studied in \cite{CPl10}, \cite{KMO101}, \cite{KLT11}, \cite{JaiNetSan13} and \cite{Cha15}.

As an application of Theorem \ref{thm-main}, we are going to study a model similar to the one in \cite{Cha15} where both sampling scheme and noise are random. It can be viewed as a statistical problem with missing values. Suppose we have an unknown signal matrix $M^* \in \R^{n_1\times n_2}$. For each entry of $M^*$, we have a noisy observation $M_{ij}^*+\varepsilon_{ij}$ with probability $p$, and have no observation otherwise. Let $M^{obs}\in\R ^{n_1\times n_2}$ record our observations, with missing entries treated as zeros. We consider the rescaled partial observation matrix $M=M^{obs}/p$ for simplicity. It is easy to see that $M$ is an unbiased estimator for $M^*$, and hence a popular starting point for further analysis. The definition of our model is formalized below.

\begin{defn}\label{def-MC}
	Let $M^*\in\R^{n_1\times n_2}$, $p\in (0,1]$ and $\sigma \geq 0$. We define $\mbox{\em NMC}(M^*,p ,\sigma )$ to be the ensemble of $n_1\times n_2$ random matrices  $M=(M_{ij})_{i\in[n_1],j\in[n_2]}$ with $M_{ij}=( M_{ij}^* + \varepsilon_{ij} )I_{ij}/p$, where $\{ I_{ij}, \varepsilon_{ij} \}_{ i \in[n_1], j \in [n_2] }$ are jointly independent, $\P ( I_{ij}=1 )=p=1- \P ( I_{ij} =0 )$ and $\varepsilon_{ij} \sim N(0,\sigma^2)$.
\end{defn}

Let $r =\rank(M^*)$ and $M^* = U^* \Sigma^* V^*$ be its singular value decomposition (SVD), where $U^* \in\mathcal{O}_{n_1\times r }$, $V^*\in\mathcal{O}_{n_2\times r }$, $\Sigma^* = \diag ( \sigma_1^*,\cdots,\sigma_r^* )$ is diagonal, and $\sigma_1^* \geq \cdots \geq \sigma_r^*$. We are interested in estimating $U^*$, $V^*$, and $M^*$. The rank $r $ is assumed to be known, which is usually easily estimated otherwise, see \cite{KOh09} for example. We work on a very simple spectral algorithm that often serves as an initial estimate of $M^*$ in iterative methods.
\begin{enumerate}
	\item Compute the $r$ largest singular values $\sigma_1\geq \cdots\geq \sigma_{r}$ of $M$, and their associated left and right singular vectors $\{u_j\}_{j=1}^{r }$ and $\{v_j\}_{j=1}^{r }$. Define $\Sigma=\diag(\sigma_1,\cdots,\sigma_{r })$, $U=(u_1,\cdots,u_{r })\in\mathcal{O}_{n_1\times r }$ and $V=(v_1,\cdots,v_{r } )\in\mathcal{O}_{ n_2 \times r }$.
	\item Return $U$, $V$ and $U\Sigma V^T$ as estimators for $U^*$, $V^*$, and $M^*$, respectively.
\end{enumerate}
Note that the matrices in Definition \ref{def-MC} are asymmetric in general, due to the rectangular shape and independent sampling. Hence, Theorem \ref{thm-main} is not directly applicable. Nevertheless, it could be tailored to fit into our framework by a ``symmetric dilation" trick. See the supplementary materials \citep{supp} for details. Below we present our results.
\begin{thm}\label{thm-MC-main}
	Let $M\sim \mbox{\rm NMC}(M^*,p,\sigma)$, $n=n_1+n_2$, $\kappa = \sigma_1^*/\sigma_r^*$, $H = \frac{1}{2}( U^T U^* + V^T V^* ) $, and $\eta=\left(
	\|U^*\|_{2\to\infty} \vee \|V^*\|_{2\to\infty}
	\right)$.
	There exist constants $C$ and $C'$ such that the followings hold.
	Suppose $p\geq 6 \frac{\log n}{n}$ and $\kappa \frac{n ( \|M^*\|_{\max} + \sigma )}{\sigma_r^*} \sqrt{\frac{\log n}{np}}  \leq 1/C$.
	With probability at least $1-C/n$, we have
	\begin{align*}
		&\left(
		\| U \|_{2\to\infty} \vee \| V \|_{2\to\infty}
		\right)
		\leq C' \kappa \eta,\\
		&\left(
		\|U \sgn(H) - U^*\|_{2\to\infty} \vee \| V \sgn(H) - V^*\|_{2\to\infty}
		\right) \leq C' \eta \kappa^2 \frac{ n ( \| M^* \|_{\max} + \sigma )}{\sigma_r^*}
		\sqrt{\frac{\log n}{np}},\\
		&\| U \Sigma V^T - M^* \|_{\max} \leq C'\eta ^2  \kappa^4
		( \| M^* \|_{\max} + \sigma )
		\sqrt{\frac{ n\log n}{p}}
		.
	\end{align*}
\end{thm}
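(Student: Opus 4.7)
The plan is to reduce the asymmetric problem to the symmetric setting of Theorem~\ref{thm-main} via the symmetric dilation trick. Set $n=n_1+n_2$ and define
\[
\tilde M=\begin{pmatrix} 0 & M \\ M^T & 0\end{pmatrix},\qquad \tilde M^*=\E\tilde M=\begin{pmatrix} 0 & M^* \\ (M^*)^T & 0\end{pmatrix}\in\R^{n\times n}.
\]
The nonzero eigenvalues of $\tilde M^*$ are $\pm\sigma_k^*$ with eigenvectors $\tfrac{1}{\sqrt 2}(u_k^{*T},\pm v_k^{*T})^T$. Applying Theorem~\ref{thm-main} with $s=0$ to the top $r$ eigenvalues, the theorem's $U^*$ becomes $\tilde U^*=\tfrac{1}{\sqrt 2}\binom{U^*}{V^*}$ and likewise $\tilde U=\tfrac{1}{\sqrt 2}\binom{U}{V}$, so that $\Delta^*=\sigma_r^*$, $\kappa=\sigma_1^*/\sigma_r^*$, and $H=\tilde U^T\tilde U^*=\tfrac12(U^TU^*+V^TV^*)$, matching the theorem statement.

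Assumptions A1--A4 must then be verified for $\tilde M$. A1 follows from $\|\tilde M^*\|_{2\to\infty}\le\|M^*\|_{2\to\infty}\vee\|(M^*)^T\|_{2\to\infty}\le\eta\sigma_1^*=\eta\kappa\sigma_r^*$. A2 is inherited from the entry-wise independence of $M$: each row of $\tilde M$ consists of a row or a column of $M$ padded with zeros and is independent of the complementary rows. For A3, matrix Bernstein applied to $M-M^*=\sum_{i,j}[(M^*_{ij}+\varepsilon_{ij})I_{ij}/p-M^*_{ij}]e_ie_j^T$ gives $\|\tilde M-\tilde M^*\|_2=\|M-M^*\|_2\lesssim(\|M^*\|_{\max}+\sigma)\sqrt{n/p}$ with probability $1-O(n^{-1})$, so one may take $\gamma\asymp\eta\kappa+(\|M^*\|_{\max}+\sigma)\sqrt{n/p}/\sigma_r^*$. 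For A4, each row of $\tilde M-\tilde M^*$ has independent, centered, sub-exponential entries with variance proxy $(\|M^*\|^2_{\max}+\sigma^2)/p$ and Orlicz norm $(\|M^*\|_{\max}+\sigma)/p$; Bernstein's inequality applied column-by-column to $W$ yields
\[
\P\bigl(\|(\tilde M-\tilde M^*)_{m\cdot}W\|_2\ge t\bigr)\le 2r\exp\!\Bigl(-c\min\{t^2/V,\,t/B\}\Bigr),
\]
with $V\lesssim\|W\|_F^2(\|M^*\|_{\max}+\sigma)^2/p$ and $B\lesssim\|W\|_{2\to\infty}(\|M^*\|_{\max}+\sigma)/p$. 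Choosing $t\asymp(\|M^*\|_{\max}+\sigma)\{\|W\|_F\sqrt{\log n/p}+\|W\|_{2\to\infty}\log n/p\}$ then fits a linear $\varphi$ with $\varphi(1)\asymp\kappa n(\|M^*\|_{\max}+\sigma)\sqrt{\log n/(np)}/\sigma_r^*$, which is $O(1)$ under the theorem's SNR hypothesis.

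Plugging these parameters into Theorem~\ref{thm-main} immediately yields $\|\tilde U\|_{2\to\infty}\lesssim\kappa\|\tilde U^*\|_{2\to\infty}$ and $\|\tilde U\sgn(H)-\tilde U^*\|_{2\to\infty}\lesssim\kappa^2(\gamma+\varphi(\gamma))\|\tilde U^*\|_{2\to\infty}$. Reading off the top $n_1$ and bottom $n_2$ blocks (and multiplying by $\sqrt 2$) recovers the first two displayed inequalities of Theorem~\ref{thm-MC-main} once one observes $\gamma+\varphi(\gamma)\asymp\kappa n(\|M^*\|_{\max}+\sigma)\sqrt{\log n/(np)}/\sigma_r^*$. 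For the max-norm bound, set $R=\sgn(H)$, $U'=UR$, $V'=VR$, and $\tilde\Sigma=R^T\Sigma R$, so that $U\Sigma V^T=U'\tilde\Sigma V'^T$. The decomposition
\[
U\Sigma V^T-M^*=(U'-U^*)\tilde\Sigma V'^T+U^*(\tilde\Sigma-\Sigma^*)V'^T+U^*\Sigma^*(V'-V^*)^T
\]
combined with $\|ABC\|_{\max}\le\|A\|_{2\to\infty}\|B\|_2\|C^T\|_{2\to\infty}$ reduces everything to the already-established $2\to\infty$ bounds on $U'-U^*$, $V'-V^*$, $U^*$, $V^*$, together with $\|\tilde\Sigma-\Sigma^*\|_2\le\|\Sigma-\Sigma^*\|_2+2\sigma_1^*\|R-I_r\|_F$, where the first summand is bounded by $\|M-M^*\|_2$ via Weyl's inequality and the second is controlled using Davis--Kahan on $\tilde U,\tilde U^*$. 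Summing and collecting factors yields the stated rate $\eta^2\kappa^4(\|M^*\|_{\max}+\sigma)\sqrt{n\log n/p}$.

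The two main obstacles, as I see them, are (i) calibrating Assumption A4 at the correct scale when the noise mixes a bounded Bernoulli mask with an unbounded Gaussian increment, which typically requires either splitting the Bernstein analysis into bounded and sub-Gaussian parts or truncating the Gaussian tail at $\sigma\sqrt{\log n}$; and (ii) producing a sharp bound on $\|\tilde\Sigma-\Sigma^*\|_2$ that accounts for both the singular-value perturbation and the rotational mismatch $\|R-I_r\|_F$. The latter is the real technical crux, since the rotation is introduced by potential eigenvalue near-multiplicities of $\tilde M^*$ and is only implicitly controlled through Davis--Kahan applied to the block eigenspaces of the dilated matrix.
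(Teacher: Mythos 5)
You follow the same route as the paper---symmetric dilation, verification of Assumptions A1--A4 with the noise split into a Bernoulli-masked bounded part and a Gaussian part, an application of Theorem \ref{thm-main} to the dilated matrix, and a three-term expansion for the max-norm error---but two steps would fail as written. The more serious one is exactly what you flag as the crux: you bound $\|\tilde\Sigma-\Sigma^*\|_2$ by $\|\Sigma-\Sigma^*\|_2+2\sigma_1^*\|R-I_r\|_F$, with $R=\sgn(H)$ and $\tilde\Sigma=R^T\Sigma R$, and propose to control $\|R-I_r\|_F$ by Davis--Kahan. This cannot work: $\sgn(H)$ need not be anywhere near $I_r$ (the computed singular vectors carry arbitrary signs, and arbitrary rotations within clusters of nearly equal singular values), so $R$ can be, say, $-I_r$; Davis--Kahan controls the subspace distance, equivalently $\|H-\sgn(H)\|_2$, never $\|\sgn(H)-I_r\|$. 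What actually saves the argument is near-commutation rather than proximity to the identity: $\|R^T\Sigma R-\Sigma\|_2\le\|\Sigma R-R\Sigma\|_2\le\|\Sigma H-H\Sigma\|_2+2\|\Sigma\|_2\|H-\sgn(H)\|_2$, and the paper bounds $\|\Sigma H-H\Sigma\|_2\lesssim\|A-A^*\|_2$ through the identity $U^TEU^*=\Lambda H-H\Lambda^*$ of Lemma \ref{lem:H}, together with $\|H-\sgn(H)\|_2\lesssim\gamma^2$. With that replacement, your decomposition of $U\Sigma V^T-M^*$ does give the stated max-norm rate.

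The second problem is your verification of A1. Bounding $\|\tilde M^*\|_{2\to\infty}\le\eta\sigma_1^*$ forces you to inflate $\gamma$ by $\eta\kappa$. The theorem's hypothesis only constrains $\kappa\,\frac{n(\|M^*\|_{\max}+\sigma)}{\sigma_r^*}\sqrt{\frac{\log n}{np}}$, and since $\|M^*\|_{\max}\ge\sigma_r^*\eta/\sqrt n$ this yields $\kappa\eta\lesssim 1$ but not $\kappa^2\eta\lesssim 1$; hence the requirement $32\kappa\max\{\gamma,\varphi(\gamma)\}\le1$ in A3 is not guaranteed for your inflated $\gamma$, and even when it holds, your bookkeeping (both $\gamma+\varphi(\gamma)$ and $\varphi(1)$ carrying an extra factor $\kappa$ that your own Bernstein calculation does not produce) delivers $\kappa^3$ rather than the stated $\kappa^2$ in the second inequality. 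The paper avoids this by using the better-scaled bound $\|\tilde M^*\|_{2\to\infty}\le\sqrt n\,\|M^*\|_{\max}$, which is automatically below $\gamma\Delta^*$ once $\gamma\asymp\frac{n(\|M^*\|_{\max}+\sigma)}{\sigma_r^*\sqrt{np}}$, and chooses $\varphi$ with $\varphi(1)\lesssim\frac{n(\|M^*\|_{\max}+\sigma)}{\sigma_r^*}\sqrt{\frac{\log n}{np}}$, free of $\kappa$; the rest of your A2--A4 verification, including the bounded/Gaussian splitting you mention as an alternative, is exactly the paper's argument.
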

To our best knowledge, the results for singular vectors are the first of this type for the spectral algorithm. Our bound on $\| U \Sigma V^T - M^* \|_{\max}$ is a by-product of that, and a similar result was derived by \cite{JNe15} using a different approach.

There are two reasons why entrywise type bounds are important. First, in applications such as recommender systems, it is often desirable to have uniform guarantees for all individuals. If we directly use existing $\ell_2$-type inequalities to control entrywise errors, the resulting bounds can be highly sub-optimal in high dimensions. Thus new results are needed. Second, in algorithms based on non-convex optimization \citep{KMO101,SLu16,JNe15}, entrywise bounds are critical for the analysis of initializations and iterations. After the first draft of this paper came out, the entrywise bounds on singular subspaces were applied by \cite{MWC17} as a guarantee for spectral intialization. The relevance of entrywise bounds goes well beyond matrix completion; see Section~\ref{sec:related}.


For the rest of this subsection, we will illustrate the results in Theorem~\ref{thm-MC-main} by comparing them with existing ones based on Frobenius norm.



Suppose $p>c\frac{\log n}{n}$ for some large constant $c>0$. By Theorems~1.1 and~1.3 of \cite{KMO101}, an upper bound for the root-mean squared error (RMSE) gives:
\begin{align}
	\frac{1}{ n } \| U \Sigma V^T - M^* \|_F \lesssim (\|M^*\|_{\max} + \sigma) \sqrt{\frac{r}{np}}. \label{eqn-NMC-RMSE}
\end{align}
This implies that the spectral algorithm is rate-optimal when $\sigma \gtrsim \| M^* \|_{\max}$, as \cite{CPl10} established a 
lower bound $\frac{1}{ n } \| \hat{M} - M^* \|_F \gtrsim \sigma \sqrt{\frac{r}{np}}$ for any estimator $\hat{M}$. On the other hand, our Theorem \ref{thm-MC-main} asserts that
\begin{align}
	\| U \Sigma V^T - M^* \|_{\max} & \lesssim_{\kappa, r,\eta} (\|M^*\|_{\max} + \sigma) \sqrt{\frac{\log n}{np}}. \notag
\end{align}
where $\lesssim_{\kappa, r,\eta}$ hides a factor $O(\kappa, r, \eta \sqrt{n/r})$ that is not large if certain matrix incoherence structure is assumed; see \cite{CRe09} for example. 
Note that our result recovers (\ref{eqn-NMC-RMSE}) up to a factor of $ \sqrt{\log n} $, since $\| X \|_F \le \sqrt{n_1 n_2} \| X \|_{\max}$ always holds for any $X$ of size $n_1 \times n_2$. 

We also compare the estimation errors of singular vectors under the Frobenius norm and the max-norm. On the one hand, the perturbation inequality in \cite{Wed72} and spectral norm concentration
yield the following.
\begin{align}
	&\max \{ \|U \sgn(H) - U^*\|_F, \|V \sgn(H) - V^*\|_F \} \lesssim
	\sqrt{r}\|M-M^* \|_2 / \sigma_r^*  \notag \\
	&\lesssim \frac{\sqrt{rn/p} (\| M^* \|_{\max}+\sigma) }{ \sigma_r^* } \lesssim
	\frac{ n \|M^*\|_{\max} }{ \sigma_r^* }
	\left(
	1 + \frac{\sigma}{\| M^* \|_{\max}}
	\right)
	\sqrt{\frac{r}{np}}
	.
	\label{eqn-NMC-RMSE-eig}
\end{align}
On the other hand, by our entry-wise bound in Theorem \ref{thm-MC-main} we have
\begin{align}
	& ~~~~ \sqrt{n}\,\max \{ \|U \sgn(H) - U^*\|_{2\to\infty}, \|V \sgn(H) - V^*\|_{2\to\infty} \}
	\notag \\&
	\lesssim_{\kappa,r,\eta}
	\frac{ n\|M^*\|_{\max} }{ \sigma_r^* }
	\left(	1 + \frac{\sigma}{\| M^* \|_{\max}}
	\right)
	\sqrt{\frac{r \log n}{np}}
	.\label{eqn-NMC-RMSE-eig-1}
\end{align}
where, as before, $\lesssim_{\kappa,r,\eta}$ hides a factor $O(\kappa, r,\eta \sqrt{n/r})$ that is usually not large. Therefore, we also recover
(\ref{eqn-NMC-RMSE-eig}) up to a factor of $\sqrt{\log n}$, since $\| X \|_F \le \sqrt{n r}\, \| X \|_{\max}$ holds for any $X$ of size $n \times r$. Note that our goal is to derive good max-norm bounds rather than improving Frobenius-norm bounds. The comparisons above demonstrate that our bounds have the `correct' order. To a certain extent, our results better portrait the behavior of spectral algorithm and provide more information than their Frobenius counterparts.

\section{Numerical experiments}\label{sec:sim}
\subsection{$\Z_2$-synchronization}\label{sec:simZ2}
We present our numerical results for the phase transition phenomenon of $\Z_2$-synchronization---see Figure \ref{fig:Z2ER}. Fix $q_1 = 500^{1/50}$ and $q_2 = 2^{1/10}$. For each $n$ in the geometric sequence $\{2, 2q_1, 2q_1^2, \cdots, 2q_1^{50} \}$ (rounded to the nearest integers), and each $\sigma$ in the geometric sequence $\{ q_2^{-32}, q_2^{-31},\cdots, q_2^{50} \}$, we compare our eigenvector-based estimator $\hat x$ with the unknown signal $x$, and report the proportion of success (namely $\hat x = \pm x$) out of $100$ independent runs in the heat map.

A theoretical curve $\sigma = \sqrt{\frac{n}{2\log n}}$ is added onto the heat map. It is clear that below the curve, the eigenvector approach almost always recovers the signal perfectly; and above the curve, it fails to recover the signal.
\begin{figure}[h!]
	\centering
	\includegraphics[scale=0.3]{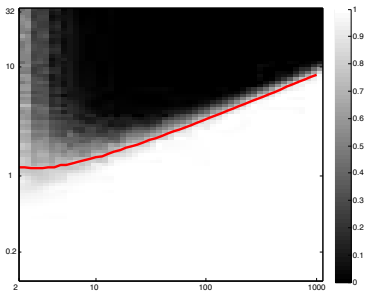}
	\caption{Phase transition of $\mathbb{Z}_2$-synchronization: the $x$-axis is the dimension $n$, and the $y$-axis is $\sigma$. 
	Lighter pixels refer to higher proportions of runs that $\hat x$ recovers $x$. The red curve shows the theoretical boundary $\sigma = \sqrt{\frac{n}{2\log n}}$.  }\label{fig:Z2ER}
\end{figure}

\subsection{Stochastic Block Model}\label{sec:simSBM}

Now we present our simulation results for exact recovery and misclassification rates of SBM. The phase transition phenomenon of SBM is exhibited on the left of Figure \ref{fig:SBM1}. In this simulation, $n$ is fixed as $300$, and parameters $a$ ($y$-axis) and $b$ ($x$-axis) vary from $0$ to $30$ and $0$ to $10$, with increments $0.3$ and $0.1$ respectively. We compare the labels returned by our eigenvector-based method with the true cluster labels, and report the the proportion of success (namely $\hat z = \pm z$) out of $100$ independent runs. As before, lighter pixels represent higher chances of success. Two theoretical curves $\sqrt{a} - \sqrt{b} = \pm \sqrt{2}$ are also added onto the heat map. Clearly, theoretical predictions match numerical results.

The right plot of Figure \ref{fig:SBM1} shows misclassification rates of our eigenvector approach with a fixed parameter $b$ and a varying parameter $a$, where $a$ is not large enough to reach the exact recovery threshold. We fix $b=2$, and increase $a$ from $2$ to $8$ by $0.2$ for three different choices of $n$ from $\{100,500,5000\}$. Then we calculate the mean misclassification rates $\E r(\hat z, z)$ averaged over $100$ independent runs, and plot $\log \E r(\hat z, z) / \log n$ against varying $b$. We also add a theoretical curve (with no markers), whose $y$-coordinates are $-(\sqrt{a} - \sqrt{b})^2/2$; see Theorem~\ref{thm-sbm} (ii). It is clear that with $n$ tending to infinity, the curves of mean misclassification rates move closer to the theoretical one.

\begin{figure}[h!]
    \centering
    \begin{subfigure}{0.44\textwidth}
        \includegraphics[width=\textwidth]{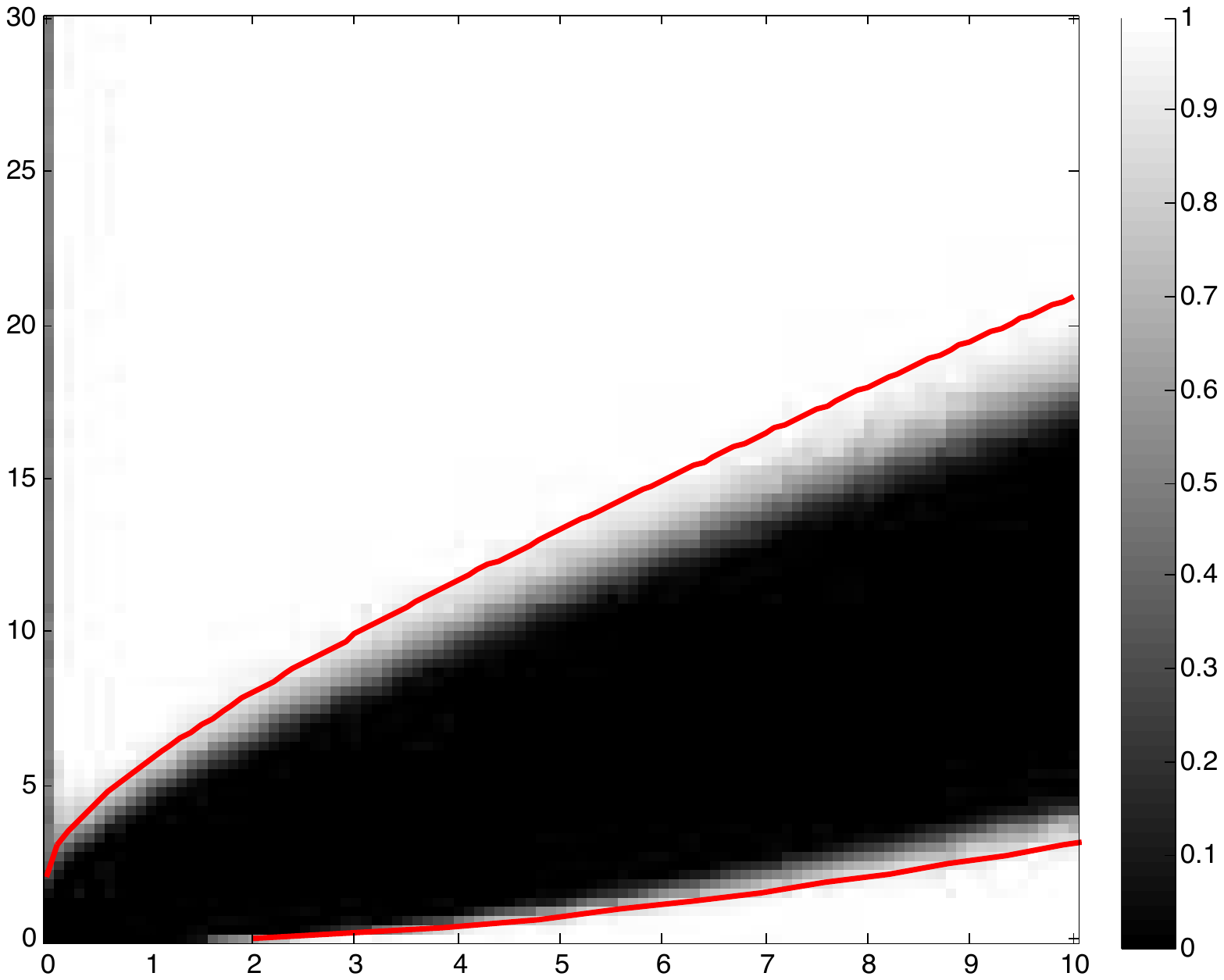}
    \end{subfigure}
    \begin{subfigure}{0.44\textwidth}
        \includegraphics[width=\textwidth]{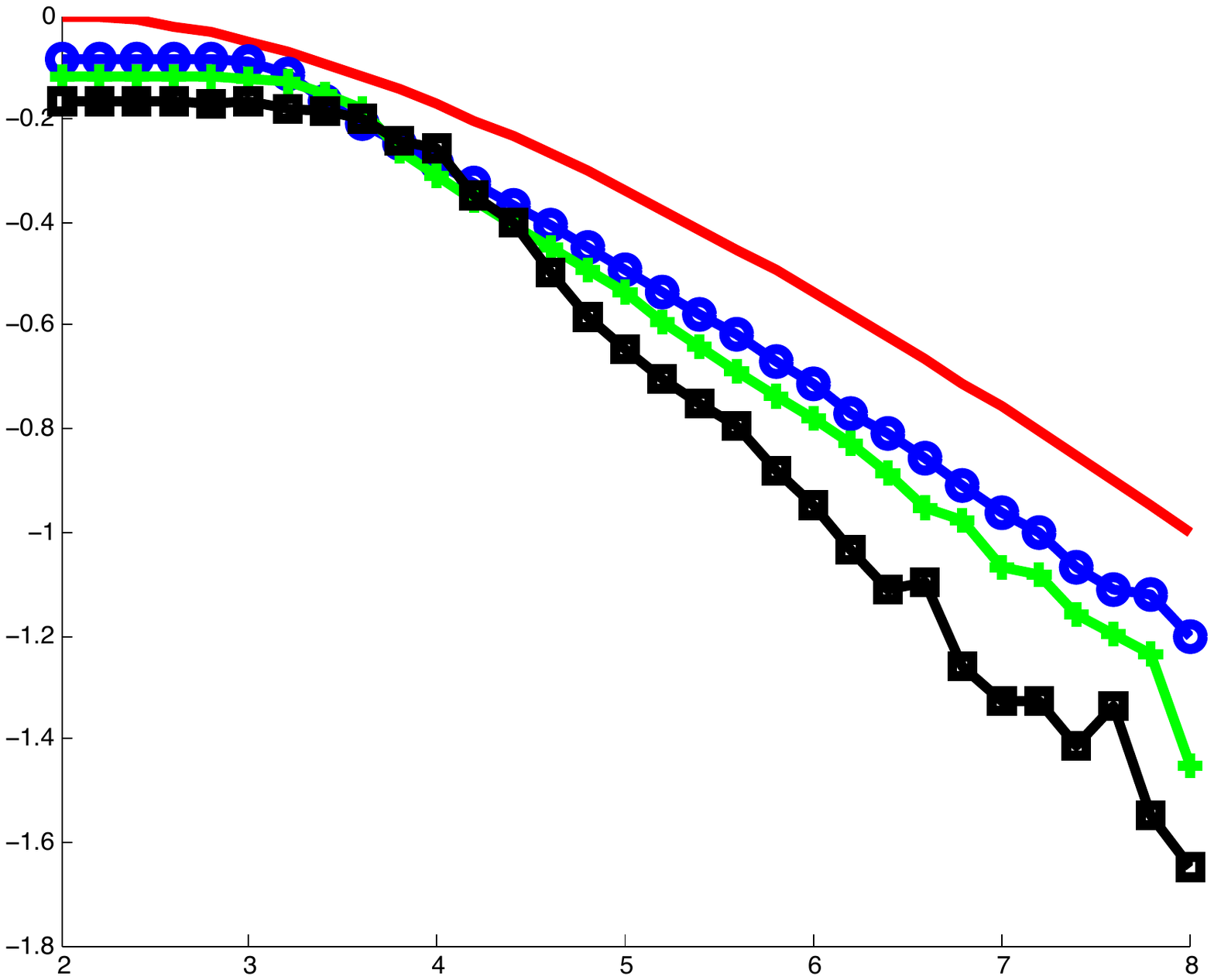}
    \end{subfigure}
    \caption{Vanilla spectral method for SBM. \textbf{Left:} phase transition of exact recovery. The $x$-axis is $b$, the $y$-axis is $a$, and lighter pixels represent higher chances of success. Two red curves $\sqrt{a} - \sqrt{b} = \pm \sqrt{2}$ represent theoretical boundaries for phase transtion, matched by numerical results. \textbf{Right:} mean misclassification rates on the logarithmic scale with $b = 2$. The $x$-axis is $a$, varying from $2$ to $8$, and the $y$-axis is $\log \E r(\hat z, z) / \log n$. No marker: theoretical curve; circles: $n=5000$; crosses: $n=500$; squares: $n=100$. 
}\label{fig:SBM1}
\end{figure}

\subsection{Matrix completion from noisy entries}\label{sec:simMC}
Finally we come to experiments of matrix completion from noisy entries. The performance of the spectral algorithm in terms of root-mean squared error (RMSE) has already been demonstrated in \cite{KMO101}, among others. In this part, we focus on the comparison between the maximum entrywise errors and RMSEs, for both the singular vectors and the matrix itself. The settings are mainly adopted from \cite{CPl10} and \cite{KMO101}. Each time we first create a rank-$r$ matrix $M^* \in \R ^{n\times n}$ using the product $M_L M_R^T$, where $M_L,M_R \in \R^{n\times r}$ have i.i.d. $N(0,20/\sqrt{n})$ entries. Then, each entry of $M^*$ is picked with probability $p$ and contaminated by random noise drawn from $N(0,\sigma^2)$, independently of others. While increasing $n$ from $500$ to $5000$ by $500$, we choose $p=\frac{10 \log n}{n}$, fix $r=5$ and $\sigma=1$. All the data presented in the plot are averaged over $100$ independent experiments.

In support of our discussions in Section \ref{sec:NMC}, Figure \ref{fig:NMC} shows that the following two ratios 
\begin{align*}
&	R_{\mathrm{mat}}=\frac{ \| U \Sigma V^T - M^* \|_{\max} }
	{ \eta^2 \sqrt{\log n} \cdot \| U \Sigma V^T - M^*\|_F },\\
&R_{\mathrm{vec}}=\frac{ \max \{ \| U\sgn(H) - U^* \|_{2\to\infty} , \|V \sgn(H) - V^* \|_{2\to\infty} \} }
	{\eta \sqrt{\log n} \cdot \max \{ \| U\sgn(H) - U^* \|_F , \|V \sgn(H) - V^* \|_F \}
	},
\end{align*}
approximately remain constant as $n$ grows. Here the RMSEs $n^{-1} \| U \Sigma V^T - M^*\|_F$ and $n^{-1/2}\max \{ \| U\sgn(H) - U^* \|_F , \|V \sgn(H) - V^* \|_F \}$ are scaled by $(\sqrt{n}\eta)^2 \sqrt{\log n}$ and $( \sqrt{n} \eta)\sqrt{\log n}$, respectively. Hence our analysis is sharp, and the perturbations are obviously delocalized among the entries.

\begin{figure}
	\centering
	\includegraphics[scale=0.35]{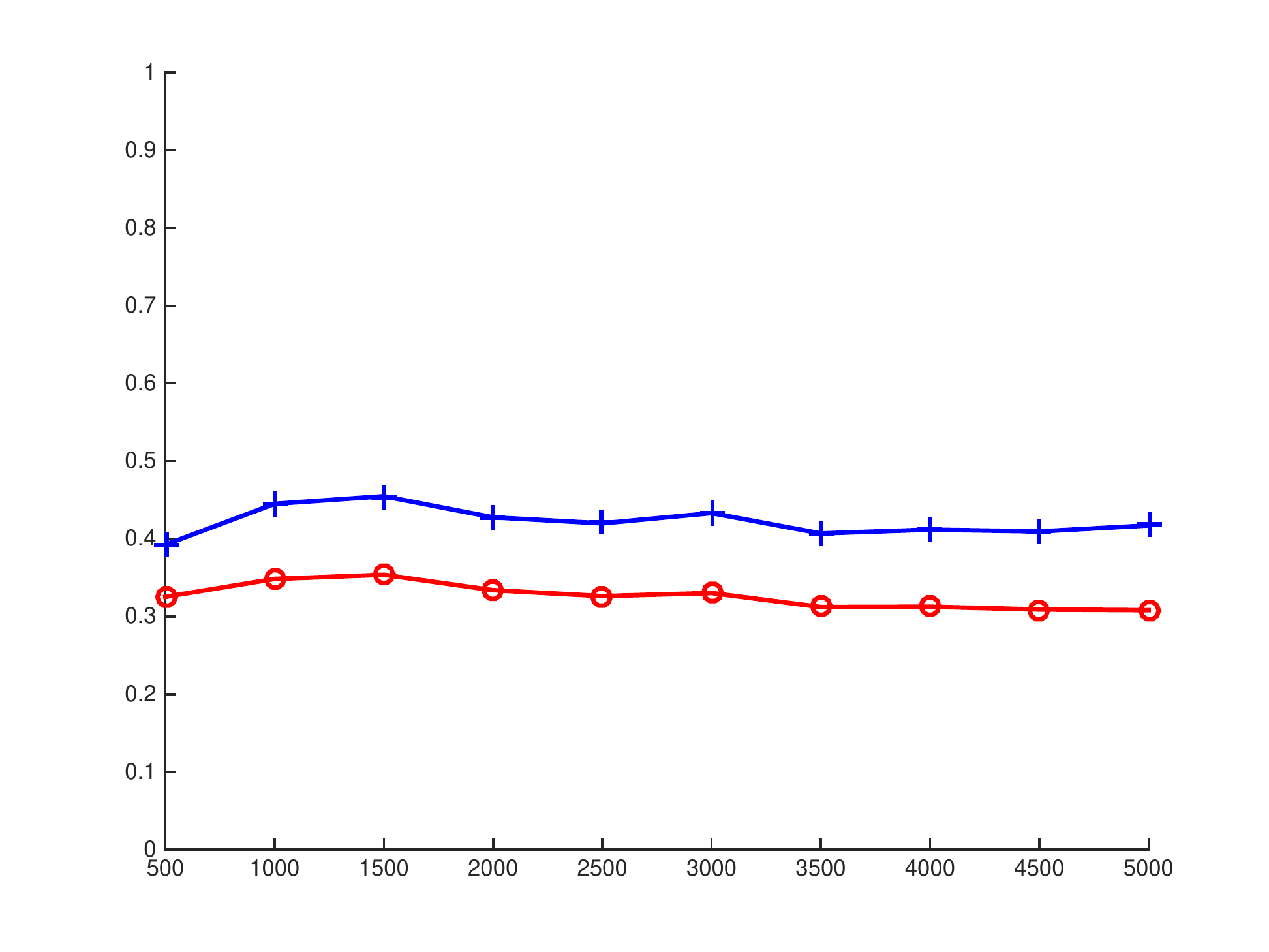}
	\caption{ $R_{\mathrm{mat}}$ and $R_{\mathrm{vec}}$ in matrix completion from noisy entries. The $x$-axis is $n$, varying from $500$ to $5000$ by $500$, and the $y$-axis is the ratio. Crosses and circles stand for $R_{\mathrm{mat}}$ and $R_{\mathrm{vec}}$, respectively.
	}\label{fig:NMC}
\end{figure}

\section{Discussions}\label{sec:discuss}

We have developed first-order approximations for eigenvectors and eigenspaces with small $\ell_\infty$ errors under random perturbations. These results lead to sharp guarantees for three statistical problems.

Several future directions deserve exploration. First, the main perturbation theorems are currently stated only for symmetric matrices. We think it may be possible to extend the current analysis to SVD of general rectangular matrices, which has broader applications such as principal component analysis. Second, there are many other graph-related matrices beyond adjacency matrices, including graph Laplacians and non-backtracking matrices, which are important both in theory and in practice. Third, we believe our assumption of row- and column-wise independence can be relaxed to block-wise independence,
which is relevant to cryo-EM and other problems. 

Finally, in our examples, the spectral algorithm is strongly consistent if and only if the MLE is, though the latter can be NP-hard to compute in general.
It would be interesting to see how general this phenomenon is, in view of better understanding the statistical and computational tradeoffs.

\section*{Acknowledgements}
The authors thank Harrison Zhou, Amit Singer, Nicolas Boumal, Yuxin Chen and Cong Ma for helpful discussions.

\newpage

\appendix

\section{Outline of proofs}\label{sec:outline}
In this section, we first present key observations leading to the inequalities \eqref{ineq:thm0a} and \eqref{ineq:thm0b} for the eigenvector case. The analysis and insights for eigenvector perturbation will be instrumental for the general eigenspace perturbation result. Then, we outline the proof ideas for Theorem \ref{thm-main}. 

\subsection{Warm-up analysis of eigenvector perturbation}\label{sec:outline1}
Let us consider the simpler setting in Section~\ref{sec:eigenspace}: we assume a rank-one structure: $A^* = \lambda^* u^* (u^*)^T$ (dropping the subscript for simplicity). By Weyl's inequality, the leading eigenvalue $\lambda$ of $A$ satisfies $| \lambda - \lambda^*| \le \| A-A^* \|_2$, and by the spectral norm concentration Assumption \hyperref[cond-3]{A3}, we obtain $ | \lambda - \lambda^*| \le \| A-A^* \|_2\le \gamma \lambda^*$. Note $\gamma < 1/2$ under Assumption \hyperref[cond-3]{A3}, so $ \lambda\ge  \lambda^*/2$. By the triangle inequality, we have
\begin{align}\label{ineq:warmup1}
\| u \|_{\infty} = \left\| \frac{Au}{\lambda} \right\|_{\infty} \le  \left\| \frac{Au^*}{\lambda} \right\|_{\infty}  + \left\| \frac{A(u - u^*)}{\lambda} \right\|_{\infty}
\le \frac{2}{\lambda^*}\left( \| Au^* \|_{\infty} + \| A(u - u^*) \|_{\infty} \right).
\end{align}
Likewise, using $u = Au / \lambda$ and $|\lambda^{-1} - (\lambda^*)^{-1} | = |\lambda - \lambda^*| / |\lambda \lambda^*| \le 2\gamma/\lambda^*$, we have
\begin{align}\label{ineq:warmup2}
\left\| u - \frac{A u^*}{\lambda^*}  \right\|_{\infty} \le \left| \frac{1}{\lambda} - \frac{1}{\lambda^*} \right| \left\|A u^* \right\|_{\infty} +  \frac{1}{\lambda} \left\|A(u -  u^*) \right\|_{\infty}   \le \frac{2}{\lambda^*}\left( \gamma \| Au^* \|_{\infty} + \| A(u - u^*) \|_{\infty} \right)
\end{align}
Note that $Au^* = \lambda^* u^* + (A-A^*)u^*$, so it is easy to bound $\| Au^* \|_{\infty}$ using the row concentration assumption: in Assumption \hyperref[cond-4]{A4}, we set $w = u^*$, and the row concentration inequality \eqref{eqn-cond-4} and the union bound imply $\| (A-A^*)u^*\|_{\infty} \le \lambda^* \varphi(1) \| u^* \|_{\infty}$ with probability $1 - \delta_1/r$ (recall $\Delta^* = \lambda^*$). Then, the goal is to obtain a good bound on
$ \| A(u - u^*) \|_{\infty} $.

However, the random quantities $A$ and $u - u^*$ are dependent, and we cannot directly use the row concentration assumption. To resolve this issue, we use a leave-one-out technique similar to the ones in \cite{bean2013optimal}, \cite{JMo15}, and \cite{ZhoBou17}. Define $n$ auxiliary matrices $A^{(1)}, A^{(2)}, \cdots, A^{(n)} \in \mathbb{R}^{n \times n}$ as follows: for any $m \in [n]$, let
\begin{equation}\label{def:aux}
[A^{(m)}]_{ij}=A_{ij}\mathbf{1}_{\{i\neq m,j\neq m\}},
\end{equation}
where $\mathbf{1}$ is the indicator function. By definition, for all $m \in [n]$, $A^{(m)}$ is a symmetric matrix, and its entries are identical to those of $A$ except that entries in its $m$th row and column are zero. The row and column-wise independence Assumption \hyperref[cond-2]{A2} then implies that $A-A^{(m)}$ and $A^{(m)}$ are independent. This simple observation is the cornerstone to decouple dependence.

For each $m \in [n]$, let $u^{(m)}$ be the leading eigenvector of $A^{(m)}$ with the appropriately chosen sign. By the triangle inequality and the definition of $\| \cdot \|_{2 \to \infty}$, we have
\begin{align}
| [A(u - u^*)]_m | &= | A_{m \cdot} (u - u^*)| \le  | A_{m \cdot} (u - u^{(m)})| + | A_{m \cdot} (u^{(m)} - u^*)|  \label{ineq:warmup3-0} \\
&\le \| A \|_{2 \to \infty} \| u - u^{(m)} \|_2 + | A_{m \cdot} (u^{(m)} - u^*)|. \label{ineq:warmup3}
\end{align}
Here recall $A_{m \cdot}$ is the $m$th row vector of $A$. The advantage of introducing $u^{(m)}$ is pronounced in the second term of \eqref{ineq:warmup3}: the $m$th row $A_{m \cdot}$  and $u^{(m)} - u^*$ are independence. This is because by definition, $u^{(m)}$ only depends on $A^{(m)}$ and thus is independent of $A_{m \cdot}$. Using the row concentration Assumption \hyperref[cond-4]{A4}, we can show that $| A_{m \cdot} (u^{(m)} - u^*)| \lesssim \lambda^*(\gamma + \varphi(\gamma)) (\|u\|_{\infty} + \| u^* \|_{\infty})$. The term $\|u\|_{\infty}$ will ultimately be absorbed into the left-hand side of \eqref{ineq:warmup1} after rearrangement.

A crucial step in the proof is a sharp bound on $\| u - u^{(m)} \|_2$. Viewing $u^{(m)}$ as a perturbed version of $u$, we use a (proper) form of Davis-Kahan's $\sin \Theta$ theorem \citep{DavKah70} to obtain
\begin{equation*}
\| u - u^{(m)} \|_2 \lesssim \frac{\| (A - A^{(m)}) u \|_2}{\lambda^*} = \frac{\| v \|_2}{\lambda^*}, \qquad \text{where } v:=  (A - A^{(m)}) u.
\end{equation*}
An important feature of this bound is that, in the numerator, $A - A^{(m)}$ only has nonzero entries in the $m$th row and $m$th column. Consider bounding $m$th entry of $v$ and its other entries separately, we have
\begin{align*}
&|v_m| = |A_{m \cdot} u| = | [A u]_{m}| = |\lambda u_m| \le |\lambda| \| u \|_{\infty}, \\
&\Big( \sum_{i \neq m} v_i^2 \Big)^{1/2} = \Big( \sum_{i \neq m} A_{mi}^2 u_m^2 \Big)^{1/2} \le \| A \|_{2 \to \infty} \| u \|_{\infty} \le 2\gamma |\lambda^*| \| u \|_{\infty},
\end{align*}
where the last inequality is due to $\| A \|_{2 \to \infty} \le \|A^*\|_{2 \to \infty} + \|A - A^*\|_2 \le 2\gamma\lambda^*$ under Assumption \hyperref[cond-1]{A1} and \hyperref[cond-3]{A3}. This will lead to a sharp bound $\| u - u^{(m)} \|_2 \lesssim \| u \|_{\infty}$, and therefore a good control of the first term of \eqref{ineq:warmup3}:
\begin{equation}\label{ineq:warmup4}
\| A \|_{2 \to \infty} \| u - u^{(m)} \|_2 \lesssim \gamma\lambda^* \| u \|_{\infty}.
\end{equation}
After rearrangement of \eqref{ineq:warmup1}, the terms involving $\| u \|_{\infty}$ will be absorbed into the left-hand side, and we will obtain the first inequality \eqref{ineq:thm0a}. Once \eqref{ineq:thm0a} is proved, we can then bound the two terms in \eqref{ineq:warmup3} in terms of $\| u^* \|_{\infty}$:
\begin{align*}
& \| A \|_{2 \to \infty} \| u - u^{(m)} \|_2 \lesssim \gamma\lambda^* (1 + \varphi(1)) \| u^* \|_{\infty}, \\
& | A_{m \cdot} (u^{(m)} - u^*)| \lesssim \lambda^*(\gamma + \varphi(\gamma)) (1 + \varphi(1)) \| u^* \|_{\infty}.
\end{align*}
Using the above two bounds, together with the bound on $\| A u^* \|_{\infty}$, we simplify \eqref{ineq:warmup2} and derive
\begin{equation*}
\left\|u -  \frac{A u^*}{\lambda^*} \right\|_{\infty} \lesssim \gamma \varphi(1) \| u^*\|_{\infty} + (\gamma + \varphi(\gamma)) (1 + \varphi(1)) \| u^* \|_{\infty},
\end{equation*}
which leads to the second inequality \eqref{ineq:thm0b}.

\subsection{Proof ideas for Theorem \ref{thm-main}}\label{sec:outline2}

Unlike the eigenvector case, the eigenspaces are up to a orthogonal matrix, so we have to study $H$ and $\sgn(H)$. Some basic properties about $H$ are stated in Lemma \ref{lem:H}. Moreover, following the decoupling idea in \ref{sec:outline1}, for each $m \in [n]$, we let $U^{(m)} = [u^{(m)}_{s+1}, \cdots, u^{(m)}_{s+r} ]\in \mathbb{R}^{n \times r}$ be the similar matrix as $U$. And in addition, we define additional $n$ auxiliary matrices $H^{(1)}, H^{(2)}, \cdots, H^{(n)} \in \mathbb{R}^{r \times r}$:
\begin{equation}\label{def:Hm}
H^{(m)} = [U^{(m)}]^T U^*.
\end{equation}

The purpose of introducing $U^{(m)}$ and $H^{(m)}$ is to ensure independence---see the comments below \eqref{ineq:warmup3}. The next lemma is a deterministic result, in parallel with the inequalities \eqref{ineq:warmup1} and \eqref{ineq:warmup2} derived for the eigenvector case.

\begin{lem}\label{lem-approx-0}
Let $\bar{\gamma} = \|E\|_2 / \Delta^*$. If $ \bar\gamma \leq 1/10$, then for any $m\in[n]$ we have
\begin{align}
	&\| (UH)_{m\cdot}  \|_{ 2} \leq \frac{2}{\Delta^*}( \|A_{m\cdot } U^*\|_2+\|A_{m\cdot}(UH- U^*)\|_2 ), \label{ineq:UHmax}\\
	&\norm{ ( UH - A U^*(\Lambda^*)^{-1} )_{m\cdot} }_2
	\le
	\frac{6\bar\gamma}{\Delta^*} \|A_{m\cdot } U^*\|_2 + \frac{2}{\Delta^*} \|A_{m\cdot} (UH- U^*)\|_2 .  \label{ineq:UHminusmax}
\end{align}
\end{lem}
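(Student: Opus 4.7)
The plan is to mimic the warm-up analysis of Section~\ref{sec:outline1} in matrix form, using the eigen-equation $AU = U\Lambda$ and controlling the mismatch between $U$ and $U^*$ through the eigen-gap and the small parameter $\bar{\gamma}$. Two a priori spectral estimates drive the argument. First, Weyl's inequality and the hypothesis $\bar\gamma\leq 1/10$ yield $|\lambda_{s+j}| \ge (1-\bar\gamma)|\lambda^*_{s+j}| \ge (1-\bar\gamma)\Delta^* \ge 9\Delta^*/10$ for each $j \in [r]$, so $\Lambda$ is invertible with $\|\Lambda^{-1}\|_2 \leq 10/(9\Delta^*)$; in particular, $AU = U\Lambda$ rearranges to $U = AU\Lambda^{-1}$, hence $UH = AU\Lambda^{-1}H$. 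Second, a Davis--Kahan $\sin\Theta$ bound gives $\sin\theta_{\max}(U, U^*) \le \bar\gamma$, whence $\sigma_{\min}(H) \ge \sqrt{1-\bar\gamma^2} \ge 9/10$, so that $\|A_{m\cdot}U\|_2 \le \sigma_{\min}(H)^{-1}\|A_{m\cdot}UH\|_2$. Combined with the triangle identity $A_{m\cdot}UH = A_{m\cdot}U^* + A_{m\cdot}(UH-U^*)$, this controls $\|A_{m\cdot}U\|_2$ by the two ``good'' quantities on the right-hand side of the lemma, up to a harmless multiplicative factor near $1$.

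The first inequality \eqref{ineq:UHmax} is then immediate: chain $\|(UH)_{m\cdot}\|_2 = \|A_{m\cdot}U\Lambda^{-1}H\|_2 \le \|A_{m\cdot}U\|_2\,\|\Lambda^{-1}\|_2\,\|H\|_2$ with the bound on $\|A_{m\cdot}U\|_2$ above, and verify that the numerical prefactor $(10/9)\cdot(10/9)\cdot 1 = 100/81$ stays below $2$.

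For the second inequality \eqref{ineq:UHminusmax}, the algebraic heart is the commutator identity obtained by exploiting the symmetry of $A$ together with the eigen-equations $AU = U\Lambda$ and $A^*U^* = U^*\Lambda^*$:
\[
\Lambda H - H\Lambda^* \;=\; U^T A U^* - U^T A^* U^* \;=\; U^T E U^*.
\]
Rearranging yields $\Lambda^{-1}H - H(\Lambda^*)^{-1} = -\Lambda^{-1}U^T E U^*(\Lambda^*)^{-1}$. Substituting $U = AU\Lambda^{-1}$ into $UH - AU^*(\Lambda^*)^{-1}$ and decomposing
\[
U\Lambda^{-1}H = (UH-U^*)(\Lambda^*)^{-1} + U^*(\Lambda^*)^{-1} + U\bigl[\Lambda^{-1}H - H(\Lambda^*)^{-1}\bigr]
\]
leads to the clean identity
\[
UH - AU^*(\Lambda^*)^{-1} \;=\; A(UH-U^*)(\Lambda^*)^{-1} \;-\; AU\Lambda^{-1}U^T E U^*(\Lambda^*)^{-1}.
\]
Taking the $m$-th row, the first term contributes exactly $\|A_{m\cdot}(UH-U^*)\|_2/\Delta^*$, while the second is at most $\|A_{m\cdot}U\|_2\cdot\|\Lambda^{-1}\|_2\cdot\|E\|_2\cdot\|(\Lambda^*)^{-1}\|_2 \lesssim \bar\gamma\,\|A_{m\cdot}U\|_2/\Delta^*$. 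Plugging in the control on $\|A_{m\cdot}U\|_2$ from the preceding paragraph gives the advertised bound, with the stated constants $6\bar\gamma$ and $2$ leaving ample room.

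\textbf{Main obstacle.} The subtle point is that $H$ is not orthogonal, so one cannot directly interchange $U$ and $UH$ inside row-wise norms as one did with the single eigenvector $u$ and its signed version in Section~\ref{sec:outline1}. The Davis--Kahan lower bound on $\sigma_{\min}(H)$ is precisely what is needed to undo this loss at the cost of a constant near~$1$. The commutator identity $\Lambda H - H\Lambda^* = U^T E U^*$ is the algebraic mechanism through which the higher-order term $UH - AU^*(\Lambda^*)^{-1}$ picks up the extra factor of $\bar\gamma$ over the naive first-order decomposition, and it is exactly this gain that is responsible for the sharpness of Theorem~\ref{thm-main}.
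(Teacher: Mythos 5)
Your proposal is correct, and it reaches the two bounds by a genuinely different arrangement of the same ingredients. The paper derives both inequalities from the single decomposition $UH\Lambda - AU^* = U(H\Lambda-\Lambda H) + A(UH-U^*)$: it multiplies by $\Lambda^{-1}$ on the right, controls the commutator row-wise through $U_{m\cdot} = (UH)_{m\cdot}H^{-1}$ together with $\|\Lambda H - H\Lambda\|_2 \le 2\|E\|_2$ and the $\|H^{-1}\|_2$ bound from Lemma~\ref{lem:H}, then obtains \eqref{ineq:UHmax} by a lower-bound-and-rearrange (self-absorption) step, and only at the end swaps $\Lambda^{-1}$ for $(\Lambda^*)^{-1}$ via $|\lambda_{s+i}^{-1}-(\lambda^*_{s+i})^{-1}| \le 2\bar\gamma/\Delta^*$. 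You instead get \eqref{ineq:UHmax} directly from $U = AU\Lambda^{-1}$, $\|H\|_2\le 1$, and $\|A_{m\cdot}U\|_2 \le \|A_{m\cdot}UH\|_2/\sigma_{\min}(H)$ — no commutator and no rearrangement are needed there — and for \eqref{ineq:UHminusmax} you use the same algebraic identity $U^TEU^* = \Lambda H - H\Lambda^*$ as the paper, but in the ``inverse'' form $\Lambda^{-1}H - H(\Lambda^*)^{-1} = -\Lambda^{-1}U^TEU^*(\Lambda^*)^{-1}$, which yields an exact expression $UH - AU^*(\Lambda^*)^{-1} = A(UH-U^*)(\Lambda^*)^{-1} - AU\Lambda^{-1}U^TEU^*(\Lambda^*)^{-1}$ already phrased in terms of $(\Lambda^*)^{-1}$, so the final replacement step disappears. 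What your route buys is a cleaner, fully explicit identity and simpler constant-tracking; what the paper's route buys is that it only needs the $\|H^{-1}\|_2$ and commutator estimates already packaged in Lemma~\ref{lem:H}, which it reuses elsewhere. One small imprecision: the Davis--Kahan bound with the deflated gap gives $\sin\theta_{\max}(U,U^*) \le \bar\gamma/(1-\bar\gamma)$, not $\bar\gamma$, so the intermediate claim $\sigma_{\min}(H)\ge\sqrt{1-\bar\gamma^2}$ should be $\sigma_{\min}(H)\ge\sqrt{1-\bar\gamma^2/(1-\bar\gamma)^2}$; since $\bar\gamma\le 1/10$ this is still at least $\sqrt{1-1/81} > 9/10$, so all of your subsequent numerical slack (e.g.\ the $100/81 < 2$ check and the $6\bar\gamma$, $2$ constants) survives unchanged.
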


The proof of Lemma \ref{lem-approx-0}, which is in the appendix, is technical by nature. This is caused by the fact that $H$ and $\Lambda$ are not commutative (whereas in the eigenvector case, $H=u^T u^* \in \R$ and $\lambda$ is commutative). By Lemma \ref{lem-approx-0} and the triangle inequality, we have the following bound for all $m$'s, which is similar to \eqref{ineq:warmup3-0}:
\begin{align}
&\| ( UH )_{m\cdot} \|_2 \leq \frac{2}{\Delta^*}( \|A_{m\cdot } U^*\|_2 +\|A_{m\cdot}(UH- U^*)\|_2 ) \notag \\
&\leq \frac{2}{\Delta^*}  \left( \|A_{m\cdot } U^*\|_2 +\|A_{m\cdot}(UH- U^{(m)} H^{(m)}) \|_2
+\|A_{m\cdot}(U^{(m)} H^{(m)}-U^*) \|_2 \right). \label{eqn-decomposition-0}
\end{align}

A deterministic argument shows that the second term in \eqref{eqn-decomposition-0}, namely $2(\Delta^*)^{-1}\|A_{m\cdot}(UH- U^{(m)} H^{(m)}) \|_2$, is a vanishing proportion of $\| (UH)_{m\cdot }\|_2$, so after rearrangement, we only need to bound the first term and the third term---see \eqref{ineq:AUUm} and \eqref{eqn-decomposition-1} in Lemma \ref{lem-approximation-error-auxiliary}. This is similar to the derivation of \eqref{ineq:warmup4}, in which we use a proper form of Davis-Kahan's theorem. The first term, by the row concentration Assumption \hyperref[cond-4]{A4}, can be controlled fairly easily---see Lemma \ref{lem:AU}. The third term, by the row concentration assumption again, can be bounded by a vanishing proportion of $\| UH  \|_{2\to\infty} + \| U^* \|_{2\to\infty}$, and finally, this leads to a bound on $\| UH \|_{2\to\infty}$ after rearranging (\ref{eqn-decomposition-0}). It is vital that the function $\varphi(x)$ in the row concentration assumption sharply captures the concentration for non-uniform weights, and this allows a good control of the third term---see Lemma \ref{lem:AV}.

All these arguments lead to a bound on $\| UH \|_{2\to\infty}$ that is roughly $O( \| U^* \|_{2\to\infty} + \| A^* \|_{2 \to \infty} / \Delta^* )$, and in many applications, this is $O( \| U^* \|_{2\to\infty})$. In other words, this says $\| UH \|_{2\to\infty }$ inflates $\| U^*\|_{2\to\infty}$ by at most a constant factor.

Similar to the eigenvector case, the bound on $\| UH \|_{2\to\infty }$ finally leads to a sharp bound on $\|UH - A U^*(\Lambda^*)^{-1}\|_{2\to\infty}$, and thus proves Theorem \ref{thm-main}. To understand this, assuming $\|A-A^*\|_2 \leq \gamma \Delta^*$, which happens with probability at least $ 1 -\delta_0$, the triangle inequality yields
\begin{align}
\|UH - A U^*(\Lambda^*)^{-1}\|_{2\to\infty} &\leq \frac{6  \gamma }{\Delta^*} \| AU^*\|_{2\to\infty} + \frac{2}{\Delta^*} \max\limits_{m\in[n]}\|A_{m\cdot}(UH- U^{(m)} H^{(m)}) \|_2  \notag \\
& + \frac{2}{\Delta^*} \max \limits_{m\in[n]}\|A_{m\cdot}(U^{(m)} H^{(m)}-U^*) \|_2. \label{eqn-decomposition-2}
\end{align}
As argued above, we can bound the first term and the second term using Lemma \ref{lem-approximation-error-auxiliary} and \ref{lem:AU}. Now that we have a bound on $\| UH \|_{2\to\infty}$, Lemma \ref{lem:AV} immediately implies that with probability $1-\delta_1$, the third term is a small proportion of that bound, or equivalently $O((\gamma+\varphi(\gamma)) (\| U^* \|_{\max} + \| A^* \|_{2 \to \infty} / \Delta^* ))$. Note that, if $\gamma = o(1)$ as $n \to \infty$, the final bound is an order smaller than $\| U^* \|_{\max} +  \| A^* \|_{2 \to \infty} / \Delta^*$.

\section{Proofs for Section~2} \label{sec:proofsFor2}

\subsection{Deterministic lemmas}
This subsection states and proves deterministic results useful for the proof. We temporarily ignore Assumptions \hyperref[cond-2]{A2}-\hyperref[cond-4]{A4} and may only assume \hyperref[cond-1]{A1}. First we present basic properties of $H = U^T U^*$, which is shown to be close to the orthonormal matrix $\sgn(H)$. Both of them play an important role in aligning $U$ with $U^*$. The techniques used in dealing with $H$ and $\sgn(H)$ are similar to the ones in \cite{FWW17}.
\begin{lem}\label{lem:H}
	$\| H \|_2 \leq 1$. When $\bar{\gamma} = \|E \|_2 / \Delta^* \leq 1$, we have
	\begin{equation}
	\| H - \sgn(H) \|_2^{1/2} \leq
	\|UU^T -U^*(U^*)^T\|_2 \leq  \frac{\|E U^* \|_2}{(1-\bar{\gamma}) \Delta^*} \leq \frac{\bar{\gamma}}{1-\bar{\gamma}},
	\end{equation}
	and $\| \Lambda H - H\Lambda\|_2\leq 2\|E\|_2$. When $\bar{\gamma} \leq 1/2$, we have $\norm{ H^{-1} }_2 \le (1- \bar{\gamma})^2 / (1-2 \bar{\gamma} )$ and
	\begin{equation}\label{ineq:key1}
	\norm{ U_{m\cdot} (H\Lambda - \Lambda H) }_{2} \le \frac{2(1-\bar{\gamma})^2 }{1-2\bar{\gamma}} \| E \|_2 \norm{ (UH)_{m\cdot} }_2,~\forall m\in[n].
	\end{equation}
\end{lem}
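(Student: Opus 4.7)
The plan is to verify the five claims in sequence by combining three standard ingredients: the singular-value characterization of canonical angles between $\spann(U)$ and $\spann(U^*)$, the Davis--Kahan $\sin\Theta$ theorem, and Weyl's inequality, all applied to the defining relations $AU=U\Lambda$, $A^*U^*=U^*\Lambda^*$, and $H=U^T U^*$.

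First, $\|H\|_2\le \|U\|_2\|U^*\|_2 = 1$ since both factors have orthonormal columns. Next, let $H=\bar U\bar\Sigma\bar V^T$ be an SVD. Unitary invariance gives $\|H-\sgn(H)\|_2=\|\bar\Sigma-I\|_2 = 1-\sigma_r(H)$, while the standard canonical-angle identity yields $\|UU^T-U^*(U^*)^T\|_2=\sqrt{1-\sigma_r(H)^2}$. Since $1+\sigma_r(H)\ge 1$, this immediately gives the first claimed inequality $\|H-\sgn(H)\|_2^{1/2}\le \|UU^T-U^*(U^*)^T\|_2$. For the middle inequality I will invoke $\sin\Theta$: by Weyl, every eigenvalue $\lambda_j$ of $A$ with $j\notin[s+1,s+r]$ lies within $\|E\|_2=\bar\gamma\Delta^*$ of $\lambda_j^*$, so the effective separation from $\{\lambda_j^*\}_{j=s+1}^{s+r}$ is at least $(1-\bar\gamma)\Delta^*$, yielding $\|UU^T-U^*(U^*)^T\|_2\le \|EU^*\|_2/((1-\bar\gamma)\Delta^*)$. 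The final inequality $\bar\gamma/(1-\bar\gamma)$ follows at once from $\|EU^*\|_2\le \|E\|_2=\bar\gamma\Delta^*$.

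For the commutator bound, I will decompose
\[
\Lambda H - H\Lambda = (\Lambda H - H\Lambda^*) + H(\Lambda^*-\Lambda).
\]
Symmetry gives $\Lambda H=\Lambda U^T U^*=(U\Lambda)^T U^*=(AU)^T U^*=U^T A U^*$ and $H\Lambda^*=U^T A^* U^*$, so the first summand equals $U^T E U^*$ and has spectral norm at most $\|E\|_2$; the second is bounded by $\|H\|_2 \|\Lambda^*-\Lambda\|_2\le \|E\|_2$ via Weyl, giving the total $2\|E\|_2$. For the remaining claims, plugging $\|H-\sgn(H)\|_2\le (\bar\gamma/(1-\bar\gamma))^2$ into $\sigma_r(H)=1-\|H-\sgn(H)\|_2$ gives $\sigma_r(H)\ge (1-2\bar\gamma)/(1-\bar\gamma)^2$, which inverts to $\|H^{-1}\|_2\le (1-\bar\gamma)^2/(1-2\bar\gamma)$. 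Finally, \eqref{ineq:key1} follows by writing $U_{m\cdot}=(UH)_{m\cdot}H^{-1}$ and applying submultiplicativity with the bounds on $\|H^{-1}\|_2$ and $\|H\Lambda-\Lambda H\|_2$ just established.

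No step is genuinely difficult: the only subtlety is the clean translation among the three quantities $1-\sigma_r(H)$, $\sqrt{1-\sigma_r(H)^2}$, and $\|H-\sgn(H)\|_2$ via the SVD of $H$, after which everything reduces to bookkeeping around Weyl, $\sin\Theta$, and submultiplicativity of the spectral norm.
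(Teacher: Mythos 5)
Your proposal is correct and follows essentially the same route as the paper: the SVD/canonical-angle identification of $\|H-\sgn(H)\|_2$ and $\|UU^T-U^*(U^*)^T\|_2$, Davis--Kahan plus Weyl for the middle bound, the decomposition $\Lambda H - H\Lambda = U^TEU^* + H(\Lambda^*-\Lambda)$, and the factorization $U_{m\cdot}=(UH)_{m\cdot}H^{-1}$ for \eqref{ineq:key1}. The only (immaterial) difference is that you obtain $\|H^{-1}\|_2 \le (1-\bar\gamma)^2/(1-2\bar\gamma)$ directly from $\sigma_r(H)=1-\|H-\sgn(H)\|_2$, whereas the paper bounds $\|H^{-1}-\sgn(H)^{-1}\|_2$ first; both give the identical constant.
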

\begin{proof}[\bf Proof of Lemma \ref{lem:H}]
	
	First, we have $ \|H \|_2 = \| U^T U^* \|_2 \leq  \| U^T \|_2 \| U^* \|_2 = 1$. Let the SVD of $H = U^T U^*$ be $\bar U \bar \Sigma \bar V^T$, where $\bar U, \bar V \in \R^{r \times r}$ are orthogonal matrices and $\bar \Sigma = \diag\{ \bar \sigma_1,\cdots, \bar \sigma_r\}$ is a diagonal matrix. Then $\sgn(H) = \bar{U} \bar V^T$. By \cite[Chp I, Cor 5.4]{SSu90}, the singular values $1\geq \bar \sigma_1\geq \cdots\geq  \bar \sigma_r\geq 0$ are the cosines of canonical angles $0\leq \bar \theta_1 \leq \cdots \leq  \bar \theta_r \leq \pi/2$ between the column spaces of $U$ and $U^*$. We have $ \| \sgn(H) - H \|_2 = 1- \cos \bar{\theta}_r $.
	
	The Davis-Kahan $\sin \Theta$ theorem \citep{DavKah70} forces $\sin \bar \theta_r \leq \norm{U^* (U^*)^T E}_2/\delta \leq \norm{E U^*}_2/\delta$, where $\delta =(\lambda_s - \lambda_{s+1}^*)_+ \wedge (\lambda_{s+r}^* - \lambda_{s+r+1})_+$ and we define $x_+ = x \vee 0$ for $x \in \R$. On the one hand, Weyl's inequality \cite[Chp IV, Cor 4.9]{SSu90} leads to $\delta \geq \Delta^* - \|E\|_2 \geq (1-\bar{\gamma}) \Delta^*$. On the other hand, $ \sin \bar{\theta}_r = \| UU^T - U^* (U^*)^T\|_2 $ follows from \cite[Chp I, Thm 5.5]{SSu90}. Besides, $\bar \theta_r \in [0,\pi/2]$ forces $ 0 \leq \cos \bar{\theta}_r \leq 1  $, $\cos \bar{\theta}_r \geq \cos^2 \bar{\theta}_r = 1-\sin^2 \bar{\theta}_r $ and
	\begin{equation*}
		\| H - \sgn (H) \|_2^{1/2} = \sqrt{1 - \cos \bar{\theta}_r} \leq \sin \bar{\theta}_r = \|UU^T -U^*(U^*)^T\|_2\leq  \frac{ \|E U^* \|_2 } {(1-\bar{\gamma})\Delta^*} \leq \frac{\bar{\gamma}}{1-\bar{\gamma}}.
	\end{equation*}
	
	Note that $U^T A = \Lambda U^T$ and $A^* U^* = U^* \Lambda^*$. We have
	\begin{equation*}
		U^T E U^* = U^TAU^* - U^T A^* U^* = \Lambda U^T U^* - U^T U^* \Lambda^* = \Lambda H - H\Lambda^*.
	\end{equation*}	
	By the triangle inequality,
	\begin{equation*}
		\| \Lambda H - H\Lambda\|_2= \|U^T E U^* + H(\Lambda^* - \Lambda)\|_2 \leq \|U^T E U^*\|_2 + \| H(\Lambda^* -\Lambda)\|_2 \leq 2\|E\|_2,
	\end{equation*}
	where we used $\| \Lambda - \Lambda^* \|_2 \le \norm{E}_2$ and $\| H \|_2 \leq 1$.
	
	From now on we assume that $\bar{\gamma } \leq 1/2 $. $\| H - \sgn (H) \|_2 \leq [ \bar{\gamma} /(1-\bar{\gamma})] ^2 \leq 1$ leads to
	\begin{align}
		&\| H^{-1} - \sgn(H)^{-1} \|_2 \leq \frac{ \|\sgn(H)^{-1} [H - \sgn(H) ]\|_2 }{1- \| \sgn(H)^{-1} [H - \sgn(H) ]\|_2} \leq \frac{ \|H - \sgn(H) \|_2 }{1- \| H - \sgn(H) \|_2} \leq \frac{\bar{\gamma}^2}{ 1 - 2 \bar{\gamma}},\notag\\
		&\| H^{-1} \|_2 \leq \| \sgn(H)^{-1} \|_2+ \| H^{-1} - \sgn(H)^{-1} \|_2  \leq 1 + \frac{\bar{\gamma}^2}{ 1 - 2 \bar{\gamma}} = \frac{(1-\bar{\gamma})^2}{1-2\bar{\gamma}}.\notag
	\end{align}
	Finally we come to the last claim.
	\begin{align*}
		\norm{  U_{m\cdot} (H\Lambda - \Lambda H) }_2
		&= \norm{(UH)_{m\cdot} H^{-1}(H\Lambda - \Lambda H) }_2 \le \norm{ ( UH )_{m\cdot} }_2 \norm{H^{-1}}_2 \norm{H \Lambda - \Lambda H}_2 \\
		&\le \norm{ ( UH )_{m\cdot} }_2 \cdot \frac{(1-\bar{\gamma})^2 }{1-2\bar{\gamma}} \cdot 2 \norm{E}_2 .
	\end{align*}
\end{proof}

Next, we prove Lemma \ref{lem-approx-0}. This will soon leads to simplified bounds in Lemma \ref{lem-approximation-error-auxiliary}.

\begin{proof}[{\bf Proof of Lemma \ref{lem-approx-0}}]
	Define $\Lambda=\diag(\lambda_{s+1},\cdots,\lambda_{s+r}) \in \R^{r \times r}$. By Weyl's inequality, $\max_{i \in [r]} |\lambda_{s+i} - \lambda_{s+i}^* | = \| \Lambda - \Lambda^*\|_2 \le \norm{E}_2 \le \bar{\gamma} \Delta^*$, and so $\min_{i \in [r]} |\lambda_{s+i}| \ge (1-\bar{\gamma})\Delta^*$. Moreover, since  $AU=U\Lambda$, we have
	\begin{equation}\label{proof-thm-main-decomposition}
	UH \Lambda - A U^*
	= U(H\Lambda - \Lambda H) + A(UH- U^*) .
	\end{equation}
	Note that when $r=1$, $H$ and $\Lambda$ are scalars, so the term involving $H\Lambda - \Lambda H$ vanishes. By the triangle inequality and Weyl's inequality, the entries of $U$ are easy to bound. However, for a general $r$, the trouble that $H$ and $\Lambda$ do not commute requires more work.
	
	We multiply (\ref{proof-thm-main-decomposition}) by $\Lambda^{-1}$ on the right, use Lemma \ref{lem:H} and the triangle inequality to derive
	\begin{align}
		\| ( UH  - A U^*\Lambda^{-1} )_{m\cdot} \|_2
		&\leq \| U _{m\cdot} ( H \Lambda - \Lambda H ) \Lambda^{-1}  \|_2 + \| A_{m\cdot} ( UH - U^* ) \Lambda^{-1} \|_2
		\notag \\
		&\leq
		\frac{ 1 }{\min_{i \in [r]} |\lambda_{s+i}|}
		\left(
		\frac{2(1-\bar{\gamma})^2 }{1-2\bar{\gamma}}\| E \|_2 \| (UH)_{m\cdot} \|_2 + \|A_{m\cdot} (UH- U^*)\|_2
		\right)
		\notag \\
		&\le  \frac{2\bar{\gamma}}{1-2\bar{\gamma} }  \| (UH)_{m\cdot} \|_2+ \frac{1}{(1-\bar{\gamma} ) \Delta^*} \|A_{m\cdot} (UH- U^*)\|_2. \label{proof-thm-main-3}
	\end{align}
	On the other hand,
	\begin{equation}\label{ineq:UHlb}
	\| (UH  - A U^* \Lambda^{-1})_{m\cdot} \|_2 \ge \norm{ (UH)_{m\cdot} }_2 - \frac{\| A_{m\cdot} U^*\|_2 }{\min_{i \in [r]} |\lambda_{s+i}| } \ge \norm{ (UH)_{m\cdot} }_2 - \frac{\norm{ A_{m\cdot} U^*}_2 }{(1- \bar{\gamma} )\Delta^*}.
	\end{equation}
	Combining this bound with (\ref{proof-thm-main-3}),
	\begin{equation}\label{proof-thm-main-4}
	\| (UH)_{m\cdot} \|_2 \leq \frac{1-2 \bar{\gamma} }{(1-\bar{\gamma}) (1-4\bar{\gamma} )}\cdot \frac{1}{\Delta^*}\big( \|A_{m\cdot } U^*\|_2+ \|A_{m\cdot}(UH- U^*)\|_2 \big).
	\end{equation}
	Since our condition $\bar{\gamma} \le 1/10$ implies $(1- \bar{\gamma} ) (1-4\bar{\gamma}) \geq 1/2$, the first inequality of this lemma follows from \eqref{proof-thm-main-4}. Besides, (\ref{proof-thm-main-4}) leads to a further upper bound of (\ref{proof-thm-main-3}):
	\begin{align}
		\| ( UH  - A U^*\Lambda^{-1} )_{m\cdot} \|_2 &\le
		\frac{2\bar{\gamma} \|A_{m\cdot } U^*\|_2 + (1-2\bar{\gamma}) \|A_{m\cdot}(UH- U^*)\|_2 }{(1-\bar{\gamma}) (1-4\bar{\gamma}) \Delta^*} \notag \\
		& \le \frac{4\bar{\gamma}}{\Delta^*} \|A_{m\cdot } U^*\|_2 + \frac{2}{\Delta^*}  \|A_{m\cdot}(UH- U^*)\|_2 \label{ineq:UHub1}.
	\end{align}
	Similar to (\ref{ineq:UHlb}), we have another lower bound
	\begin{align}\label{ineq:UHlb2}
		\| ( UH  - A U^*\Lambda^{-1} )_{m\cdot} \|_2 &\ge \norm{ ( UH - A U^*(\Lambda^*)^{-1} )_{m\cdot} }_2 - \norm{A_{m\cdot} U^*((\Lambda^*)^{-1} - \Lambda^{-1}) }_2 \notag \\
		&\ge \norm{ ( UH - A U^*(\Lambda^*)^{-1} )_{m\cdot} }_2 -  \frac{2\bar\gamma}{  \Delta^*} \|A_{m\cdot } U^*\|_2,
	\end{align}
	where we used $|(\lambda^*_{s+i})^{-1} - \lambda_{s+i}^{-1}| \le \| E \|_2 / |\lambda^*_{s+i} \lambda_{s+i}| \le
	\frac{\|E\|_2 }{ (1-\bar{\gamma}) (\Delta^*)^2 } \leq \frac{\bar\gamma}{ (1-\bar\gamma) \Delta^*} \le \frac{2 \bar\gamma }{\Delta^*}$. Combining (\ref{ineq:UHub1}) and (\ref{ineq:UHlb2}), we obtain the second claim.
\end{proof}

\begin{lem}\label{lem-approximation-error-auxiliary}
	Let Assumption \hyperref[cond-1]{A1} hold. If $\| A - A^* \|_2 \leq \gamma \Delta^*$, then for all $m\in[n]$ we have
	\begin{align}
		&\|UU^T- U^{(m)} (U^{(m)})^T\|_2\leq 3\kappa \|(UH)_{m\cdot}\|_2, \label{ineq:distUUm}
		\\
		&\|A_{m\cdot}(UH-U^{(m)}H^{(m)} )\|_{2} \leq 3\kappa \|A\|_{2\rightarrow\infty}\|(UH)_{m\cdot}\|_2. \label{ineq:AUUm}
	\end{align}
	As a consequence, we have the following bounds under the conditions above:
	\begin{align}
		&\| (UH)_{m\cdot} \|_2 \leq \frac{4}{\Delta^*} \left( \|A_{m\cdot} U^*\|_2
		+ \|A_{m\cdot}(U^{(m)} H^{(m)}-U^*) \|_2 \right), \label{eqn-decomposition-1}\\
		&\max_{m \in[n]} \| U^{(m)} H^{(m)}-U^* \|_2\leq 6 \gamma ,  \label{ineq:Vm2} \\
		&\max_{m \in[n]} \| U^{(m)} H^{(m)}-U^* \|_{2\to\infty} \leq 4 \kappa  \|UH\|_{2\to\infty } + \|U^*\|_{2\to\infty }. \label{ineq:Vmmax}
	\end{align}
\end{lem}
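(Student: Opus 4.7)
The proof decomposes into five sub-claims that all stem from a single structural observation: $A - A^{(m)}$ is supported only on the $m$th row and column, so it admits the rank-$3$ factorization
\[ A - A^{(m)} = A_{\cdot m} e_m^T + e_m A_{m\cdot} - A_{mm} e_m e_m^T. \]
The plan is to establish (1) first, from which (2)--(5) follow by shorter manipulations.

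For (1), apply a Davis--Kahan $\sin\Theta$ inequality comparing the invariant subspaces $\mathrm{col}(U)$ and $\mathrm{col}(U^{(m)})$ of $A$ and $A^{(m)}$; this identifies $\|UU^T - U^{(m)}(U^{(m)})^T\|_2$ with $\|\sin\Theta\|_2$ and yields a bound of the form $\|(A - A^{(m)})U\|_2/(\text{gap})$. Two applications of Weyl's inequality (for $A$ vs.\ $A^*$ and $A^{(m)}$ vs.\ $A^*$), together with Assumptions \hyperref[cond-1]{A1} and \hyperref[cond-3]{A3}, ensure that the gap remains on the order of $\Delta^*$. The numerator splits along the three rank-one summands: the off-diagonal contributions are bounded by $\|A\|_{2\to\infty}\|U_{m\cdot}\|_2$, while the middle piece simplifies via the eigen-relation $A_{m\cdot}U = U_{m\cdot}\Lambda$ to $\|\Lambda\|_2 \|U_{m\cdot}\|_2$. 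The factor $\kappa$ enters through $\|\Lambda\|_2/\Delta^* \asymp \kappa$; the passage from $\|U_{m\cdot}\|_2$ to $\|(UH)_{m\cdot}\|_2$ is effected by the bound $\|H^{-1}\|_2 \leq 2$ from Lemma \ref{lem:H}, and $\|A\|_{2\to\infty}/\Delta^* \leq 2\gamma$ is absorbed into the constant by Assumption \hyperref[cond-3]{A3}.

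Claim (2) then follows almost for free from the identity $UH - U^{(m)}H^{(m)} = (UU^T - U^{(m)}(U^{(m)})^T)U^*$ (coming from $H = U^T U^*$ and $H^{(m)} = (U^{(m)})^T U^*$), combined with $\|U^*\|_2 = 1$ and $\|A_{m\cdot}\|_2 \leq \|A\|_{2\to\infty}$. For (3), apply the triangle split $\|A_{m\cdot}(UH - U^*)\|_2 \leq \|A_{m\cdot}(UH - U^{(m)}H^{(m)})\|_2 + \|A_{m\cdot}(U^{(m)}H^{(m)} - U^*)\|_2$ inside the first inequality of Lemma \ref{lem-approx-0}, substitute (2), and absorb the resulting self-referential coefficient $3\kappa \|A\|_{2\to\infty}/\Delta^* \cdot \|(UH)_{m\cdot}\|_2$ into the left-hand side; Assumption \hyperref[cond-3]{A3} forces this coefficient to be at most $1/2$, which converts the prefactor $2/\Delta^*$ into $4/\Delta^*$.

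For (4), a second Davis--Kahan application --- now viewing $A^{(m)}$ as a perturbation of $A^*$ --- suffices: the projection identity $\|U^{(m)}H^{(m)} - U^*\|_2 = \|(I - U^{(m)}(U^{(m)})^T)U^*\|_2$ realises this quantity as a canonical-angle sine, while $\|A^{(m)} - A^*\|_2 \leq \|A - A^*\|_2 + \|A - A^{(m)}\|_2 \lesssim \gamma \Delta^*$ (the second term being $\lesssim \|A\|_{2\to\infty}$) feeds into Davis--Kahan to deliver the $O(\gamma)$ conclusion. For (5), split by $i = m$ and $i \neq m$: in the first case, any eigenvector of $A^{(m)}$ corresponding to a nonzero eigenvalue must vanish in its $m$th coordinate, so $(U^{(m)}H^{(m)} - U^*)_{m\cdot} = -U^*_{m\cdot}$; in the second case, insert $UH$ through the triangle inequality and control $\|(UH - U^{(m)}H^{(m)})_{i\cdot}\|_2 \leq \|UU^T - U^{(m)}(U^{(m)})^T\|_2 \leq 3\kappa \|UH\|_{2\to\infty}$ using (1), combining with $\|(UH)_{i\cdot}\|_2 \leq \|UH\|_{2\to\infty}$ and $\|U^*_{i\cdot}\|_2 \leq \|U^*\|_{2\to\infty}$. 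The main obstacle throughout is step (1): obtaining a sharp $3\kappa$ factor requires careful bookkeeping of the three rank-one pieces of $A - A^{(m)}$ together with a Weyl-based verification that the eigen-gap $\Delta^*$ is preserved under the perturbation $A \to A^{(m)}$, whose spectral norm is of size $O(\gamma \Delta^*)$ and not asymptotically negligible relative to $\Delta^*$ when $r > 1$ permits nearby eigenvalues inside the chosen block.
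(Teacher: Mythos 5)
Your proposal follows essentially the same route as the paper: leave-one-out Davis--Kahan (with Weyl controlling the eigen-gap of $A$) applied to the rank-structured perturbation $A-A^{(m)}$, the identity $UH-U^{(m)}H^{(m)}=(UU^T-U^{(m)}(U^{(m)})^T)U^*$, self-absorption via Lemma \ref{lem-approx-0} to get the $4/\Delta^*$ bound, Davis--Kahan against $A^*$ for the $6\gamma$ bound, and a triangle-inequality insertion of $UH$ for the last claim; your handling of the $m$th row through $A_{m\cdot}U=U_{m\cdot}\Lambda$ \emph{before} inserting $H$ even sidesteps the paper's commutator step with $\Lambda H-H\Lambda$. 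One small caution on the bookkeeping you flag: the loose bound $\|H^{-1}\|_2\le 2$ would push the constant past $3\kappa$, so you should invoke the sharper estimate $\|H^{-1}\|_2\le(1-\bar\gamma)^2/(1-2\bar\gamma)$ from Lemma \ref{lem:H} (close to $1$ under $\gamma\le 1/32$) to land exactly on the stated constants.
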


\begin{proof}[\bf Proof of Lemma \ref{lem-approximation-error-auxiliary} ]
	First, we prove the ``as a consequence" part. Under Assumption \hyperref[cond-1]{A1} the condition $\| A - A^* \|_2 \le \gamma  \Delta^*$, we have $\| A \|_{2 \to \infty} \le \| A - A^*\|_2 + \|A^*\|_{2 \to \infty} \le 2\gamma  \Delta^*$. Since $4\kappa \| A \|_{2 \to \infty} \le 8\kappa  \gamma  \Delta^* \le \Delta^* / 4$, from the bound \eqref{ineq:AUUm} we have
	\begin{equation*}
		\|A_{m\cdot}(UH-U^{(m)}H^{(m)} )\|_2 \le 4\kappa \|A\|_{2\rightarrow\infty}\|(UH)_{m\cdot}\|_2 \le \frac{\Delta^*}{4} \|(UH)_{m\cdot}\|_2 , \quad \forall \, m \in [n].
	\end{equation*}
	Using this bound to simplify \eqref{eqn-decomposition-0}, we obtain the desired inequality \eqref{eqn-decomposition-1} after rearrangement.
	
	Recall the definition of $H^{(m)}$ in \eqref{def:Hm}. The fact $(U^*)^T U^* = I_r$ yields $U^{(m)} H^{(m)}-U^*=[U^{(m)} (U^{(m)})^T-U^* (U^*)^T]U^*$. We use Lemma \ref{lem:H} to derive that
	\begin{equation*}
		\|U^{(m)} H^{(m)}-U^*\|_2 \leq \|U^{(m)} (U^{(m)})^T-U^* (U^*)^T\|_2  \leq \frac{\| A^{(m)} - A^*\|_2}{ \Delta^* - \| A^{(m)} - A^*\|_2}.
	\end{equation*}
	Since $A - A^{(m)}$ can only have nonzero entries in its $m$'th row and column (recall the definition \eqref{def:aux}) we have $\|A - A^{(m)}\|_2\le 2\| A \|_{2 \to \infty}$. Thus, by the triangle inequality and the assumption $\| A -A^* \|_2 \le \gamma \Delta^*$,
	\begin{equation*}
		\| A^{(m)} - A^*\|_2 \le \| A - A^* \|_2 + \|A - A^{(m)}\|_2 \le \gamma \Delta^* + 2(\| A^* \|_{2 \to \infty} + \| A - A^* \|_{2 \to \infty}) \le  5\gamma \Delta^*.
	\end{equation*}
	This inequality and the condition $\gamma \le 1/32$ leads to a new bound:
	\begin{equation*}
		\| U^{(m)} H^{(m)}-U^* \|_2\leq  \frac{ 5 \gamma \Delta^* }{ \Delta^* - 5\gamma \Delta^* } < 6 \gamma.
	\end{equation*}
	To get (\ref{ineq:Vmmax}), we use the triangle inequality and \eqref{ineq:distUUm} to obtain that
	\begin{align}
		&\| U^{(m)} H^{(m)}-U^* \|_{2\to\infty} \leq \|U^{(m)} H^{(m)}-UH\|_2 + \|UH\|_{2\to\infty} + \|U^*\|_{2\to\infty} \notag\\
		&= \|[U^{(m)} (U^{(m)})^T-UU^T]U^*\|_2 + \|UH\|_{2\to\infty} + \|U^*\|_{2\to\infty} \notag\\
		&\leq  (3\kappa  +1 ) \|UH\|_{2\to\infty} + \|U^*\|_{2\to\infty} \leq 4 \kappa \| UH \|_{2\to\infty} + \| U^* \|_{2\to\infty}.\notag
	\end{align}
	This finishes the proof of the ``as a consequence" part, and now we return to the first part. To bound $\|UU^T- U^{(m)} (U^{(m)})^T\|_2$, we will view $A^{(m)}$ as a perturbed version of $A$ and use Lemma \ref{lem:H} to obtain a bound. Let $\Delta := (\lambda_s-\lambda_{s+1}) \wedge (\lambda_{s+r}-\lambda_{s+r+1})$ be the gap that separates $\{ \lambda_{s+j} \}_{j=1}^r$ with the other eigenvalues of $A$. By Weyl's inequality, $|\lambda_i - \lambda_i^*| \le \| A - A^* \|_2$ for any $i \in [n]$, so $\Delta\geq \Delta^* - 2 \| A-A^* \|_2 \geq (1-2\gamma )\Delta^*$. Then by Lemma \ref{lem:H},
	\begin{equation*}
		\|UU^T- U^{(m)}(U^{(m)})^T\|_2\leq \frac{2\|(A-A^{(m)})U \|_2}{\Delta} \leq \frac{2 \|(A-A^{(m)})UH\|_2\|H^{-1}\|_2}{(1-2\gamma )\Delta^*}
	\end{equation*}
	By Lemma \ref{lem:H}, $\|H^{-1}\|_2 \le  1/(1-2 \gamma  )$. The condition $\gamma \leq 1/32$ implies
	\begin{equation}\label{proof-lem-approximation-1}
	\|UU^T- U^{(m)}(U^{(m)})^T\|_2 \leq \frac{2 \| (A-A^{(m)}) UH\|_2}{(1-2\gamma )^2 \Delta^*} \le \frac{2.3 \| (A-A^{(m)}) UH\|_2}{\Delta^*} =: \frac{2.3 \| B \|_2}{ \Delta^*}.
	\end{equation}
	Note that the entries of $A-A^{(m)}$ are identical to those of $A$ in the $m$'th row and $m$'th column, and are zero elsewhere. This leads us to consider bounding $\| B \|_2^2$ by two parts:
	\begin{equation}\label{ineq:B}
	\| B \|_2^2 \le \| B \|_F^2 =  \|B_{m\cdot}\|_2^2 + \sum_{i \neq m} \|B_{i\cdot}\|_2^2
	\end{equation}
	Observe that for any $i\neq m$, we have $B_{i\cdot}=(A-A^{(m)})_{i\cdot}UH=A_{im}(UH)_{m\cdot}$, so
	\begin{equation}\label{ineq:B1}
	\sum_{i \neq m} \|B_{i\cdot}\|_2^2 \le \|(UH)_{m\cdot}\|_2^2\sum_{i\neq m}A_{im}^2 \leq \|(UH)_{m\cdot}\|_2^2  \|A\|_{2\rightarrow\infty}^2.
	\end{equation}
	We also observe that $B_{m\cdot}=A_{m\cdot}UH=U_{m\cdot}\Lambda H$, where $A_{m\cdot}U = U_{m\cdot}\Lambda$ follows from the eigenvector definition. Thus, Lemma \ref{lem:H} implies that
	\begin{align}
		\norm{ B_{m\cdot} }_2 &= \norm{ (U\Lambda H)_{m \cdot} }_2 \le \norm{ (U H \Lambda)_{m \cdot} }_2 + \norm{ [ U(\Lambda H - H\Lambda) ]_m }_2 \notag \\
		&= \norm{ (U H \Lambda)_{m \cdot} }_2 + \norm{ (UH)_{m\cdot} H^{-1} (\Lambda H - H\Lambda)}_2 \notag \\
		& \le \|(UH)_{m\cdot}\|_2\|\Lambda\|_2 + \frac{2\|E\|_2}{1-2\gamma }\|(UH)_{m\cdot}\|_2 \notag \\
		&\leq \|(UH)_{m\cdot}\|_2 ( \|\Lambda^*\|_2 + \|\Lambda - \Lambda^*\|_2 + \frac{2\|E\|_2}{1-2\gamma } ) \notag \\
		&\leq (1+ \gamma  + \frac{2\gamma }{1-2\gamma })\|(UH)_{m\cdot}\|_2 \|\Lambda^*\|_2
		\le 1.1 \|(UH)_{m\cdot}\|_2 \|\Lambda^*\|_2\label{ineq:B2}
	\end{align}
	where we used $\| H^{-1} \|_2 \leq 1/(1-2 \gamma ) $, $ \| \Lambda H - H \Lambda \|_2 \leq 2 \|E \|_2 $, $\|\Lambda - \Lambda^*\|_2 \leq \|E\|_2 \leq \gamma  \Delta^* \le \gamma  \|\Lambda^*\|_2$ and $\gamma  \le1/32$.
	Since $\|A\|_{2\rightarrow\infty} \leq 2 \gamma  \Delta^*\leq 2 \gamma  \|\Lambda^*\|_2$, we combine the bounds (\ref{ineq:B1}) and (\ref{ineq:B2}), and simplify (\ref{ineq:B}):
	\begin{equation*}
		\|B\|_2^2\le \|(UH)_{m\cdot}\|_2^2 \;
		( 1.1^2\|\Lambda^*\|_2^2+ 4\gamma ^2 \|\Lambda^*\|_2^2)
		\leq \Big( 1.2\|(UH)_{m\cdot}\|_2\|\Lambda^*\|_2 \Big)^2.
	\end{equation*}
	The first inequality follows from (\ref{proof-lem-approximation-1}) and the above bound.
	
	Now since $UH = UU^TU^*$, $U^{(m)} H^{(m)}  = U^{(m)} (U^{(m)})^TU^* $, we use $\| U^* \|_2 \le 1$ to derive
	\begin{equation*}
		\|A_{m\cdot}(UH- U^{(m)} H^{(m)}) \|_2 =\|A_{m\cdot}[UU^T- U^{(m)} (U^{(m)})^T]U^* \|_2\leq  \|A\|_{2\rightarrow\infty}\|UU^T- U^{(m)} (U^{(m)})^T\|_2.
	\end{equation*}
	The second  inequality immediately follows from \eqref{ineq:distUUm}.
\end{proof}

\subsection{Proof of Theorem \ref{thm-main}}
In this subsection, we will prove Theorem \ref{thm-main}. We will state and prove Lemma \ref{lem:AU} and Lemma \ref{lem:AV}, which provide bounds on $\| AU^*\|_{2\to\infty }$ and $\max_{m \in [n]} \|A_{m\cdot}(U^{(m)} H^{(m)} - U^* ) \|_2$ respectively. Then, we combine these two lemmas together to prove Theorem \ref{thm-main}.

\begin{lem}\label{lem:AU}
	Let Assumption \hyperref[cond-4]{A4} hold. With probability at least $1 - \delta_1$, we have
	\begin{align}\label{ineq:AU}
		& \|(A-A^*) U^*\|_{2\to\infty }\le \Delta^* \, \varphi(1) \|U^*\|_{2\to\infty },\\
		&\|AU^*\|_{2\to\infty } \le \left( \|\Lambda^*\|_2 + \Delta^* \, \varphi(1)  \right) \| U^* \|_{2\to\infty }.
	\end{align}
\end{lem}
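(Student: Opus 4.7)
The plan is to apply Assumption \hyperref[cond-4]{A4} directly with the choice $W = U^*$, take a union bound across all rows, and then combine with the fact that $U^*$ consists of eigenvectors of $A^*$ to obtain the second inequality from the first via the triangle inequality.

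First I would fix $m \in [n]$ and instantiate Assumption \hyperref[cond-4]{A4} with $W = U^*$. The crucial observation is the geometric fact that $\|U^*\|_F = \sqrt{r}$ (since $U^*$ has orthonormal columns), and moreover $\|U^*\|_{2\to\infty} \ge \sqrt{r/n}$ because the maximum row $\ell_2$-norm must be at least the root-mean-square row norm. Therefore
\[
\frac{\|U^*\|_F}{\sqrt{n}\,\|U^*\|_{2\to\infty}} = \frac{\sqrt{r}}{\sqrt{n}\,\|U^*\|_{2\to\infty}} \le 1.
\]
Since $\varphi$ is non-decreasing by Assumption \hyperref[cond-4]{A4}, this bound on the argument gives $\varphi\bigl( \|U^*\|_F / (\sqrt{n}\,\|U^*\|_{2\to\infty}) \bigr) \le \varphi(1)$. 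Plugging into \eqref{eqn-cond-4} yields
\[
\P\Bigl( \|(A-A^*)_{m\cdot}\, U^*\|_2 \le \Delta^* \varphi(1) \, \|U^*\|_{2\to\infty} \Bigr) \ge 1 - \delta_1/n.
\]

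Next I would take a union bound over $m = 1,\ldots,n$. Since $\|(A-A^*)U^*\|_{2\to\infty} = \max_m \|(A-A^*)_{m\cdot} U^*\|_2$ by definition of the $2\to\infty$ norm, the union bound gives the first inequality with probability at least $1 - \delta_1$.

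Finally, for the second inequality, I would use the triangle inequality $\|AU^*\|_{2\to\infty} \le \|A^* U^*\|_{2\to\infty} + \|(A-A^*)U^*\|_{2\to\infty}$. Since $U^* = (u^*_{s+1}, \dots, u^*_{s+r})$ are eigenvectors of $A^*$ with eigenvalues in $\Lambda^*$, we have $A^*U^* = U^* \Lambda^*$, so $\|A^* U^*\|_{2\to\infty} \le \|U^*\|_{2\to\infty} \|\Lambda^*\|_2$. Combining with the first inequality on the same event yields the claimed bound. There is no real obstacle here; the proof is essentially a direct substitution into Assumption \hyperref[cond-4]{A4}, the only subtlety being the verification that the argument of $\varphi$ does not exceed $1$ so that the monotonicity of $\varphi$ can be invoked.
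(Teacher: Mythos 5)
Your proposal is correct and follows essentially the same route as the paper: apply Assumption \hyperref[cond-4]{A4} with $W=U^*$, note the argument of $\varphi$ is at most $1$ (the paper uses the general bound $\|U^*\|_F\le\sqrt{n}\,\|U^*\|_{2\to\infty}$, you use orthonormality, which is equivalent here), take a union bound over the rows, and finish with the triangle inequality plus $A^*U^*=U^*\Lambda^*$. No gaps.
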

\begin{proof}[\bf Proof of Lemma \ref{lem:AU}]
	To get the first inequality we will use the row concentration assumption \hyperref[cond-4]{A4}. With probability at least $1 - \delta_1/n$, we have $ \|(A-A^*)_{m \cdot} U^*\|_2 \le \Delta^*  \, \varphi(1) \|U^*\|_{2\to\infty }$. Here we used the monotonicity of $\varphi$ and the fact that $\| U^*\|_F \leq \sqrt{n} \| U^*\|_{2\to\infty }$. Taking a union bound over $m$, we deduce that $ \|(A-A^*) U^*\|_{2\to\infty }\le \Delta^* \, \varphi(1) \|U^*\|_{2\to\infty}$ holds with probability at least $1 - \delta_1$. By the triangle inequality,
	\begin{equation}\label{ineq:AUstar}
	\|AU^*\|_{2\to\infty } \le \|A^* U^*\|_{2\to\infty } + \|(A-A^*) U^*\|_{2\to\infty }.
	\end{equation}
	Since $A^* U^* = U^*\Lambda^*$, the first term on the right-hand side of \eqref{ineq:AUstar} is bounded by $\|\Lambda^*\|_2   \|U^*\|_{2\to\infty}$.
	The lemma is proved by combining the bounds on the two terms in \eqref{ineq:AUstar}.
\end{proof}

\begin{lem}\label{lem:AV}
	Let Assumptions \hyperref[cond-1]{A1}-\hyperref[cond-4]{A4} hold. Denote $V^{(m)}=U^{(m)} H^{(m)}-U^* \in \mathbb{R}^{n \times r}$ for $m \in [n]$. There exists an event $\mathcal{E}_1$ with probability at least $1 - \delta_1$, such that on $\{ \| A-A^* \|_2\leq \gamma \Delta^* \} \cap \mathcal{E}_1$ we have
	\begin{equation}\label{ineq:AV}
	\max_{m\in[n]}\| A_{m \cdot} V^{(m)} \|_2 \le 6\gamma \|A^* \|_{2 \to \infty} + \Delta^* \, \varphi(\gamma)\left( 4 \kappa \|UH\|_{2\to\infty } + 6\|U^*\|_{ 2\to\infty} \right).
	\end{equation}
\end{lem}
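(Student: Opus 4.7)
\textbf{Proof plan for Lemma \ref{lem:AV}.} The overall strategy is a clean linearization-plus-concentration split. Write
\[
A_{m\cdot} V^{(m)} = A^*_{m\cdot} V^{(m)} + (A-A^*)_{m\cdot} V^{(m)},
\]
and bound the two pieces separately. The deterministic piece is handled by the submultiplicative inequality $\|A^*_{m\cdot} V^{(m)}\|_2 \le \|A^*\|_{2\to\infty}\|V^{(m)}\|_2$ combined with the operator-norm estimate $\|V^{(m)}\|_2 \le 6\gamma$ from \eqref{ineq:Vm2} (which is available on $\{\|A-A^*\|_2 \le \gamma\Delta^*\}$). This contributes the $6\gamma\|A^*\|_{2\to\infty}$ term.

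The stochastic piece $(A-A^*)_{m\cdot} V^{(m)}$ is where the leave-one-out construction pays off. By Assumption A2 the row/column $m$ entries of $A$ are independent of $\{A_{ij}: i\neq m, j\neq m\}$, hence independent of $A^{(m)}$. Because $V^{(m)} = U^{(m)}H^{(m)} - U^*$ is a deterministic function of $A^{(m)}$, the matrix $V^{(m)}$ is independent of the random vector $(A-A^*)_{m\cdot}$. So I would condition on $A^{(m)}$, treat $V^{(m)}$ as a fixed deterministic matrix $W$, and invoke Assumption A4 with $W = V^{(m)}$ to obtain, with conditional probability at least $1-\delta_1/n$,
\[
\|(A-A^*)_{m\cdot}V^{(m)}\|_2 \;\le\; \Delta^* \|V^{(m)}\|_{2\to\infty}\,\varphi\!\left(\frac{\|V^{(m)}\|_F}{\sqrt{n}\,\|V^{(m)}\|_{2\to\infty}}\right).
\]
Taking expectations and then a union bound over $m\in[n]$ defines the event $\mathcal{E}_1$ of total probability at least $1-\delta_1$ on which the bound holds for every $m$ simultaneously.

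The remaining task is to massage the right-hand side into the form stated in the lemma, using only the monotonicity structure built into $\varphi$. The key inequality is that the hypotheses ``$\varphi$ non-decreasing'' and ``$\varphi(x)/x$ non-increasing'' together imply $\varphi(x) \le \varphi(\gamma)(1 + x/\gamma)$ for all $x \ge 0$ (split on $x\le\gamma$ versus $x>\gamma$). Applied to $x = \|V^{(m)}\|_F/(\sqrt{n}\|V^{(m)}\|_{2\to\infty})$ this gives
\[
\|V^{(m)}\|_{2\to\infty}\,\varphi(x) \;\le\; \varphi(\gamma)\Bigl(\|V^{(m)}\|_{2\to\infty} + \tfrac{1}{\gamma\sqrt{n}}\|V^{(m)}\|_F\Bigr).
\]
I would then plug in the two a priori bounds on $V^{(m)}$: \eqref{ineq:Vmmax} for the first summand, giving $4\kappa\|UH\|_{2\to\infty}+\|U^*\|_{2\to\infty}$, and $\|V^{(m)}\|_F \le \sqrt{r}\,\|V^{(m)}\|_2 \le 6\sqrt{r}\,\gamma$ together with $\|U^*\|_{2\to\infty} \ge \sqrt{r/n}$ (which follows from $\|U^*\|_F = \sqrt{r}$) for the second, yielding $\|V^{(m)}\|_F/(\gamma\sqrt{n})\lesssim \|U^*\|_{2\to\infty}$. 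Combining these gives the announced $\varphi(\gamma)(4\kappa\|UH\|_{2\to\infty} + \mathrm{const}\cdot\|U^*\|_{2\to\infty})$ term.

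I expect the main technical subtlety to be (i) the independence argument: one must be precise that $V^{(m)}$ is $\sigma(A^{(m)})$-measurable while $(A-A^*)_{m\cdot}$ is independent of $\sigma(A^{(m)})$ under A2, so the row concentration bound of A4 can be invoked on the random $W = V^{(m)}$ without any measurability pitfalls; and (ii) the $\varphi$-estimate above, which is the only place the full strength of the ``$\varphi(x)/x$ non-increasing'' hypothesis is used and is what allows the delicate replacement of a random argument by the deterministic $\gamma$. The remaining work is bookkeeping of the deterministic bounds from Lemma \ref{lem-approximation-error-auxiliary}.
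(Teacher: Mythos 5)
Your proposal follows essentially the same route as the paper's proof: the same split of $A_{m\cdot}V^{(m)}$ into $A^*_{m\cdot}V^{(m)}$ plus $(A-A^*)_{m\cdot}V^{(m)}$, the same use of the leave-one-out independence (A2) to invoke A4 with $W=V^{(m)}$ and a union bound over $m$ to build $\mathcal{E}_1$, and the same final bookkeeping via \eqref{ineq:Vm2}, \eqref{ineq:Vmmax} and $\|U^*\|_{2\to\infty}\ge\sqrt{r/n}$. The only cosmetic difference is that the paper uses the maximum bound $\varphi(x)\,\|W\|_{2\to\infty}\le \varphi(\gamma)\bigl(\|W\|_{2\to\infty}\vee \|W\|_F/(\sqrt{n}\,\gamma)\bigr)$ rather than your additive estimate $\varphi(x)\le\varphi(\gamma)(1+x/\gamma)$, which is why the paper obtains the constant $6$ in front of $\|U^*\|_{2\to\infty}$ where your version gives $7$ --- immaterial for all downstream uses.
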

\begin{proof}[\bf Proof of Lemma \ref{lem:AV}]
	By the triangle inequality,
	\begin{equation*}
		\| A_{m \cdot} V^{(m)} \|_2 \le \| A_{m \cdot}^* V^{(m)} \|_2 + \| (A - A^*)_{m \cdot} V^{(m)} \|_2
	\end{equation*}
	When $\mathcal{E}_0 = \{ \| A -A^* \|_2 \le \gamma \Delta^* \}$ happens (which has probability at least $1-\delta_0$ by Assumption \hyperref[cond-3]{A3}), we use \eqref{ineq:Vm2} to bound the first term:
	\begin{equation}\label{ineq:emm}
	\| A_{m \cdot}^* V^{(m)} \|_2 \le \| A_{m \cdot}^* \|_2 \| V^{(m)} \|_2 \le \| A^* \|_{2 \to \infty} \| V^{(m)} \|_2 \le 6\gamma \| A^* \|_{2 \to \infty}.
	\end{equation}
	To bound the second term, we use the row concentration Assumption \hyperref[cond-4]{A4}. For any $m \in [n]$ and $W \in \R^{n\times r}$, with probability at least $1 - \delta_1 / n$,
	\begin{equation*}
		\|(A-A^*)_{m\cdot} W \|_2 \le \Delta^* \,
		\varphi  \Big( \frac{\|W\|_F}{\sqrt{n}\, \| W\|_{2\to\infty}} \Big) \, \| W \|_{2\to\infty} = \Delta^* \,
		\varphi  \Big( \frac{\|W\|_F}{\sqrt{n}\, \| W\|_{2\to\infty}}  \Big) \, \frac{\sqrt{n}\| W \|_{2 \to \infty}}{\| W \|_F}  \frac{ \| W\|_F}{\sqrt{n}}.
	\end{equation*}
	From the facts that $\varphi(x)$ is increasing and $\varphi(x)/x$ is decreasing for $x \in [0,+\infty)$ we get
	\begin{align}\label{ineq:rowconctr2}
		\|(A-A^*)_{m\cdot} W \|_2 \le \begin{cases} \Delta^* \varphi(\gamma) \| W \|_{2\to \infty}, & \text{ if } \frac{\|W\|_F}{\sqrt{n}\, \| W\|_{2\to \infty}} \le \gamma \\
			\Delta^* \frac{\varphi(\gamma)}{\gamma} \frac{ \| W\|_F}{\sqrt{n}}, & \text{ if } \frac{\|W\|_F}{\sqrt{n}\,  \| W\|_{2\to\infty}} >\gamma \end{cases}
		\le \Delta^* \varphi( \gamma ) \left(  \| W \|_{2\to\infty} \vee \frac{\|W\|_F}{\sqrt{n}\, \gamma } \right).
	\end{align}
	Thanks to our leave-one-out construction, $(A-A^*)_{m\cdot}$ and $V^{(m)}$ are independent. If we define
	\begin{equation*}
		\mathcal{E}_1 = \bigcap_{m\in[n]} \left\{ \|(A-A^*)_{m\cdot} V^{(m)} \|_2 \leq \Delta^* \varphi( \gamma ) \left(  \| V^{(m)} \|_{2\to\infty} \vee \frac{\| V^{(m)} \|_F}{\sqrt{n}\, \gamma } \right) \right\},
	\end{equation*}
	then it follows from \eqref{ineq:rowconctr2}, Assumption \hyperref[cond-4]{A4} and union bounds that $\P ( \mathcal{E}_1 ) \geq 1-\delta_1 $.
	
	Now suppose $\mathcal{E}_0 \cap \mathcal{E}_1$ happens. \eqref{ineq:Vm2} forces $ \| V^{(m)}\|_F \leq \sqrt{r} \| V^{(m)} \|_2 \leq 6 \gamma \sqrt{r} $, and we obtain that for all $m \in [n]$,
	\begin{equation*}
		\|(A-A^*)_{m\cdot} V^{(m)}\|_2
		\leq \Delta^* \varphi(\gamma) \Big( \|V^{(m)}\|_{2\to\infty } \vee ( 6\sqrt{ r/n} ) \Big).
	\end{equation*}
	Since $\| U^* \|_{2\to\infty} \ge  \sqrt{r/n}$, we use \eqref{ineq:Vmmax} to simplify the above bound:
	\begin{equation}\label{ineq:emm2}
	\|(A-A^*)_{m\cdot} V^{(m)}\|_{2}
	\leq  \Delta^* \varphi(\gamma) \left( 4\kappa  \|UH\|_{2\to\infty} + 6\|U^*\|_{2\to\infty} \right).
	\end{equation}
	The proof is completed by combining the bounds \eqref{ineq:emm} and \eqref{ineq:emm2}.
\end{proof}

Finally we are ready for Theorem \ref{thm-main}.
\begin{proof}[\bf Proof of Theorem \ref{thm-main}]
	Lemma \ref{lem:H} forces that
	\begin{align*}
		& \| U \sgn(H) - UH \|_{2\to\infty} \leq \| UH \|_{2\to\infty}\|H^{-1}\|_2\|H - \sgn(H) \|_2 \lesssim \gamma^2 \| UH \|_{2\to\infty}.
	\end{align*}
	Thanks to this observation and $\| U \sgn(H) \|_{2\to\infty} = \|U\|_{2\to\infty}$, the first two inequalities in Theorem~\ref{thm-main} are implied by
	\begin{align}
		&\|U H \|_{2\to\infty} \lesssim \left( \kappa + \varphi(1) \right) \| U^* \|_{2\to\infty} + \gamma \| A^* \|_{2 \to \infty} / \Delta^*,\label{ineq:first-new}\\
		&\|U H - A U^*(\Lambda^*)^{-1}\|_{2\to\infty}\lesssim  \kappa ( \kappa + \varphi(1)	) ( \gamma + \varphi(\gamma) )
		\| U^*\|_{2\to\infty}
		+\gamma \| A^* \|_{2 \to \infty} / \Delta^*.\label{ineq:second-new}
	\end{align}
	Below we are going to show (\ref{ineq:first-new}), (\ref{ineq:second-new}), and finally the third inequality in Theorem~\ref{thm-main}. Let $\mathcal{E}$ be an event where $\| A- A^*\|_2 \leq \gamma \Delta^*$ and the followings hold:
	\begin{align}
		&\|UH\|_{2\to\infty} \leq \frac{4}{\Delta^*}(\|A U^*\|_{2\to\infty}
		+ \max _{m \in[n]}\|A_{m\cdot}(U^{(m)} H^{(m)}-U^*) \|_{2} ), \label{eq-final-1} \\
		&\| UH  - A U^*(\Lambda^*)^{-1} \|_{2\to\infty}\leq \frac{6 \gamma }{\Delta^*} \| A U^*\|_{2\to\infty}+
		\frac{2}{\Delta^*} \|A (UH- U^*)\|_{2\to\infty},\label{eq-final-2}\\
		& \|(A-A^*) U^*\|_{2\to\infty}\le \Delta^* \, \varphi(1) \|U^*\|_{2\to\infty},\label{eq-final-6}\\
		& \|AU^*\|_{2\to\infty} \le \left( \|\Lambda^*\|_2 + \Delta^* \, \varphi(1)  \right) \| U^* \|_{2\to\infty}, \label{eq-final-3} \\
		& \|A_{m\cdot}(UH-U^{(m)}H^{(m)} )\|_{2} \leq 3\kappa \|A\|_{2\rightarrow\infty}\|(UH)_{m\cdot}\|_2
		\leq 3 \kappa \cdot 2\gamma \Delta^* \cdot \|UH\|_{2\to\infty},~\forall m,\label{eq-final-4}\\
		&\| A_{m \cdot} (U^{(m)} H^{(m)}-U^*) \|_2 \le 6\gamma \|A^* \|_{2 \to \infty} + \Delta^* \, \varphi(\gamma)\left( 4 \kappa  \|UH\|_{2\to\infty} + 6\|U^*\|_{2\to\infty} \right),~\forall m.\label{eq-final-5}
	\end{align}
	It follows from Lemmas \ref{lem-approx-0}, \ref{lem-approximation-error-auxiliary}, \ref{lem:AU} and \ref{lem:AV} that $\P(\mathcal{E}) \geq 1 - \delta_0 - 2 \delta_1$. On the event $\mathcal{E}$,
	(\ref{eq-final-1}), (\ref{eq-final-3}) and (\ref{eq-final-5}) control $\| UH \|_{2\to\infty}$ from above:
	\begin{align*}
		\|UH\|_{2\to\infty} &\leq \frac{4}{\Delta^*}( \| \Lambda^*\|_2 + \Delta^*\, \varphi(1)) \| U^* \|_{2\to\infty} + \frac{4}{\Delta^*} \cdot 6\gamma \|A^* \|_{2 \to \infty}\\
		& + \frac{4}{\Delta^*} \cdot \Delta^*\, \varphi(\gamma) \left(
		4 \kappa \|UH\|_{2\to\infty} + 6 \|U^*\|_{2\to\infty} \right).
	\end{align*}
	Since $16 \kappa \varphi(\gamma) \le 1/2$ under Assumption \hyperref[cond-1]{A1}, we rearrange the inequality to eliminate $\|UH\|_{2\to\infty}$ on the right-hand side, and obtain
	\begin{equation}\label{ineq:UHfinal}
	\|UH\|_{2\to\infty} \leq \Big( \frac{8  \| \Lambda^*\|_2 }{\Delta^*} + 8 \varphi(1) + 48 \varphi(\gamma) \Big) \| U^* \|_{2\to\infty} + \frac{48 \gamma \| A^* \|_{2 \to \infty}}{\Delta^*}.
	\end{equation}
	(\ref{ineq:first-new}) follows from (\ref{ineq:UHfinal}), $\kappa =  \| \Lambda^*\|_2/\Delta^*$ and $\varphi(\gamma) \le \varphi(1)$ (by monotonicity of $\varphi$).
	Now we move on to (\ref{ineq:second-new}). On the event $\mathcal{E}$, by \eqref{eq-final-2} and the triangle inequality,
	\begin{align*}
		\|UH - A U^*(\Lambda^*)^{-1}\|_{2\to\infty} &\leq \frac{6  \gamma }{\Delta^*} \| AU^*\|_{2\to\infty} + \frac{2}{\Delta^*} \max\limits_{m\in[n]}\|A_{m\cdot}(UH- U^{(m)} H^{(m)}) \|_2 \notag \\
		& + \frac{2}{\Delta^*} \max \limits_{m\in[n]}\|A_{m\cdot}(U^{(m)} H^{(m)}-U^*) \|_2.
	\end{align*}
	Using \eqref{eq-final-3}--\eqref{eq-final-5}, the three terms above can be bounded by $6 \gamma (\kappa + \varphi(1) ) \|U^*\|_{2\to\infty}$, $12\kappa \gamma \|UH\|_{2\to\infty}$ and $ 12\gamma \frac{\| A^*\|_{2\rightarrow\infty}}{\Delta^*} + 2 \varphi(\gamma)( 4\kappa \|UH\|_{2\to\infty} + 6 \|U^*\|_{2\to\infty} )$ , respectively. Hence
	\begin{align*}
		\|UH - A U^*(\Lambda^*)^{-1}\|_{2\to\infty} &\lesssim
		\left(\kappa + \varphi(1) \right)\left(\gamma + \varphi(\gamma) \right)
		\| U^*\|_{2\to\infty}
		+ \gamma \|A^*\|_{2\rightarrow\infty} / \Delta^* \\
		& + \kappa  ( \gamma + \varphi(\gamma) )\|UH\|_{2\to\infty}.
	\end{align*}
	Plugging (\ref{ineq:first-new}) into this estimate and using the fact $32\kappa \max\{ \gamma, \varphi(\gamma) \} \leq 1$, we derive (\ref{ineq:second-new}).
	From $A^*U^* =U^*\Lambda^*$ we deduce that
	\[
	\| A U^* (\Lambda^*)^{-1} - U^* \|_{2\to\infty} = \| (A-A^*) U^* ( \Lambda^*)^{-1}\|_{2\to\infty} \leq \| (A-A^*) U^*\|_{2\to\infty} / \Delta^*.
	\]
	The third inequality in Theorem~\ref{thm-main} follows from the second inequality in Theorem~\ref{thm-main}, \eqref{eq-final-6}, and this inequality.
\end{proof}

\section{Proofs for Section 3} \label{sec:proofsFor3}
\subsection{Proofs for $\Z_2$-synchronization}
\begin{proof}[\bf Proof of Theorem \ref{thm:syn}]
	To invoke Theorem \ref{thm:simple}, we set $A^* = xx^*$ and $A = Y$ . The rank of $A^*$ is $r=1$, and its leading value is $\lambda_1^* = \Delta^* = n$, and its leading eigenvector is $u^*_1 = \frac{1}{\sqrt{n}} x$. We will drop the subscript $1$ in the proof. Note than $| u_i^*| = 1/\sqrt{n}$ for all $i \in [n]$. We choose $\varphi(x) = x$ and $\gamma = \max\{\frac{3}{\sqrt{\log n}}, 1/\sqrt{n}\}$. It is clear that $\gamma = o(1)$ and $\|  u^* \|_{\infty} \le \gamma$. To verify that assumption that $A$ concentrates under the spectral norm, we note that a standard concentration result shows $\| A - A^* \|_2 \le 3\sigma \sqrt{n}$ with probability at least $1 - O(e^{-n/2})$; see \citep[Prop.~3.3]{bandeira2014tightness} for example. To verify the row concentration assumption, note that for each $m \in [n]$, $(A-A^*)_{m \cdot} w$ is a Gaussian variable with a variance no greater than $\sigma^2 \| w \|_2^2$, and $\Delta^* \| w \|_{\infty} \varphi( \frac{\| w\|_2}{\sqrt{n} \| w \|_{\infty}}) = \sqrt{n} \| w \|_2$. Thus,
	\begin{align*}
		&~~~~ \P\left(|(A-A^*)_{m\cdot} w| \le \Delta^* \| w \|_{\infty}  \,
		\varphi  \Big( \frac{\|w\|_2}{\sqrt{n} \| w \|_{\infty} } \Big)
		\right) \ge \P\left( \sigma \| w \|_2 |N(0,1)| \le \sqrt{n}\| w \|_2 \right)\\
		& \ge \P\left( |N(0,1)| \le  \sqrt{(2+\veps)\log n} \right) \ge 1-\frac{2}{\sqrt{2\pi(2+\veps)\log n}}\, n^{-(1+\veps/2)},
	\end{align*}
	where we used a standard Gaussian tail bound $\P(N(0,1) > t) \le \frac{1}{\sqrt{2\pi}t} e^{-t^2/2}$. Therefore, the row concentration assumption holds. Now we use Theorem \ref{thm:simple} to obtain that
	\begin{equation*}
		\min_{s \in \{ \pm 1 \}}\| su - Au^*/\lambda^* \|_{\infty} =\min_{s \in \{ \pm 1 \}} \| su - (u^* + \sigma W u^*/n) \|_{\infty} \lesssim \frac{1}{\sqrt{n \log n}}.
	\end{equation*}
	As argued before, each entry of the vector $\sigma W u^* /n$ is Gaussian, and by the union bound,
	\begin{equation*}
		\P \left( \| \sigma W u^*/n \|_{\infty} \le \sqrt{\frac{2}{(2 + \veps)n}} \right) \ge 1- n  \cdot \frac{2}{\sqrt{4\pi \log n}}\, n^{-1} = 1 - o(1).
	\end{equation*}
	It follows that with probability $1 - o(1)$, $\sqrt{n}\, \| su - u^* \|_{\infty} =  \sqrt{2/(2 + \veps)} + C (\log n)^{-1/2}$, where $C>0$ is a constant. Since $\sqrt{n} \, u^* = x$ can only be $\pm 1$, the desired inequality follows.
\end{proof}

\subsection{Proofs for Stochastic Block Model}
In this subsection, we will use $\Bern(p)$ to denote a Bernoulli random variable $\xi$ with success probability $p$, i.e., $\P(\xi = 1) = p$, $\P(\xi = 0) = 1- p$. The proofs are organized as follows. First, we present the proof of Corollary~\ref{cor-sbm2}, followed by a few associated lemmas; then, we state a tail inequality in Lemma~\ref{lem-tail}, which is useful for analyzing the entries of the linearized eigenvector; next, we prove the main result Theorem~\ref{thm-sbm}; and finally, we prove Theorem~\ref{thm::sbm-lowerbound}, which is followed by additional lemmas.

Before the proofs, we give a derivation of \eqref{eq:miscla} from \cite{zhang2016minimax}. To avoid confusion with our notations, we replace $a,b$ in Theorem~2.2 and~3.2 of \cite{zhang2016minimax} by $a',b'$.\\

\par {\bf Derivation of \eqref{eq:miscla}.}
In Theorem~2.2 and~3.2 of \cite{zhang2016minimax}, we choose $K=2$, and $a' = a\log n$ and $b' = b \log n$. Note that there is a slight difference in the definition of the parameter space from our Definition~\ref{def:mis-sbm}, as we require two communities to have exactly the same number of vertices; nevertheless, as we explained, we could adapt our proofs slighted so that vanilla spectral algorithm still matches the minimax result in their approximately equal-sized regime. 

We use Taylor expansion for $I$ (defined in (1.2) therein):
\begin{align*}
I &= -2\log \left( \frac{ab \log n}{n} + (1 - \frac{a \log n}{2n}) (1 - \frac{b \log n}{2n}) \right) \\
&= -2\log \left( \frac{ab \log n}{n} + 1 - \frac{a \log n}{2n}  - \frac{b \log n}{2n} + o\big(\frac{\log n}{n}\big) \right) \\
&= -2\left( \frac{ab \log n}{n} - \frac{a \log n}{2n}  - \frac{b \log n}{2n} \right) + o\big(\frac{\log n}{n}\big) \\
&= (\sqrt{a} - \sqrt{b})^2 \cdot \frac{\log n}{n} +  o\big(\frac{\log n}{n}\big).
\end{align*}
If $a>b>0$ are constants, it is clear that $nI \to \infty$, so the assumptions of Theorem~2.2 and~3.2 are satisfied. Thus, we can combine these two theorems to obtain \eqref{eq:miscla}. 
\hfill$\square$

\begin{proof}[\bf Proof of Corollary~\ref{cor-sbm2}]
	We will use Theorem~\ref{thm-main} to prove this result, since Theorem~\ref{thm:simple} does not give us the failure probability $O(n^{-3})$. Below we check all the required assumptions. Since we are interested in the second eigenvector, we take $s=r=1$, $\Lambda^*=\lambda_2^*$ and $U^*=u^*_2$. Recall that $A^*$ has rank $2$, with $\lambda^*_1 = \frac{(p+q)n}{2} $ and $\lambda^*_2 = \frac{(p-q)n}{2}$, so $\Delta^*=(\lambda^*_1-\lambda^*_2) \wedge (\lambda^*_2-\lambda^*_3) = (b \wedge \frac{a-b}{2})\log n$. Moreover, $\kappa =  (\frac{a-b}{2})/ (b \wedge \frac{a-b}{2})$, $\|U^*\|_{\max}=1/\sqrt{n}$ and $\| A^*\|_{2\rightarrow\infty}= \frac{\log n}{\sqrt{n}}\sqrt{\frac{a^2+ b^2}{2}} $.
	
	Let $c_1, c_2>0$ be the quantities (only depending on $a$ and $b$) defined in Lemma \ref{lem-sbm-concentration}, which is stated later (see below). We take $\gamma = [(b \wedge \frac{a-b}{2}) \sqrt{\log n}]^{-1}c_1$ and $\varphi(x)= \frac{2a+4}{b \wedge \frac{a-b}{2}}( 1 \vee \log (1/x) )^{-1} $. Assumption \hyperref[cond-1]{A1} and the first part of Assumption \hyperref[cond-3]{A3} holds when $n$ is sufficiently large, and Assumption \hyperref[cond-2]{A2} is trivially satisfied. By Lemma \ref{lem-sbm-concentration}, the second part of Assumption \hyperref[cond-3]{A3} holds with $\delta_0 = c_2 n^{-3}$. Now we show that \hyperref[cond-4]{A4} holds with $\delta_1 = 2n^{-3}$. When applied to  $A \sim \SBM(n , a\frac{\log n}{n}, b \frac{\log n}{n} ,J )$, Lemma \ref{lem-sbm-l2-linf} (stated later), with $p=a\frac{\log n}{n}$, $\alpha = 4/a$, yields
	\begin{equation*}
		\begin{split}
			& \P\Big( |(A-A^*)_{m\cdot} w| \leq
			\frac{(2a+4) \log n}{ 1 \vee \log( \frac{\sqrt{n} \|w\|_{\infty} }{\| w \|_2}  )} \|w\|_{\infty}
			\Big)
			\geq 1- 2n^{-4}.
		\end{split}
	\end{equation*}
	Hence, Assumption \hyperref[cond-4]{A4} is satisfied. Then the desired result follows from Theorem~\ref{thm-main}.
\end{proof}
In the proof above we used the two concentration inequalities, Lemma \ref{lem-sbm-concentration} and Lemma \ref{lem-sbm-l2-linf}. We omit the proof of Lemma \ref{lem-sbm-concentration}, since it directly follows from Theorem 5.2 in \cite{LRi15} or Theorem 5 in \cite{HWX16}, built upon the fundamental result in \cite{FOf05}. Lemma \ref{lem-sbm-l2-linf} is a Bernstein-type inequality and is proved using moment generating function. 
\begin{lem}\label{lem-sbm-concentration}
	Let $A \sim \SBM(n , a\frac{\log n}{n}, b \frac{\log n}{n} ,J )$. There exist $c_1,c_2>0$ determined by $a$ and $b$ such that
	\begin{equation}
	\P (\|A- A^* \|_2\geq c_1\sqrt{\log n}) \leq  c_2 n^{-3}.
	\end{equation}
\end{lem}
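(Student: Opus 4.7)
The plan is to establish spectral norm concentration for the noise matrix $E := A - A^*$, which is a centered symmetric matrix whose entries on and above the diagonal are independent bounded random variables with variance at most $a\log n /n$. The target bound $\|E\|_2 \le c_1 \sqrt{\log n}$ is of the correct order, since $\sqrt{np} \asymp \sqrt{\log n}$ in this regime. A naive matrix Bernstein inequality is insufficient here: with $\sigma^2 \asymp a \log n$ and $R = 1$, it yields a factor $n \exp(-c_1^2/(2a))$ that does not decay in $n$. Therefore a sharper discretization argument is required, and I would follow the Kahn--Szemer\'edi / Feige--Ofek paradigm as adapted to the sparse SBM in \cite{FOf05, LRi15, HWX16}.

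First I would reduce $\|E\|_2$ to $\sup_{u,v\in\mathcal N}u^TEv$ via a $1/2$-net $\mathcal N$ of the unit sphere (so $|\mathcal N|\le 5^n$ and $\|E\|_2 \le 4\sup_{u,v\in\mathcal N}u^T E v$). For each fixed pair $(u,v)\in\mathcal N\times\mathcal N$, I would split the bilinear form $u^T E v = \sum_{ij} E_{ij} u_i v_j$ into a \emph{light} part over indices $(i,j)$ with $|u_i v_j|\le \sqrt{\log n/n}$ and a \emph{heavy} part over the complement. The light part satisfies a good Bernstein bound: each summand is bounded in magnitude by $\sqrt{\log n / n}$ and the variance sums to at most $\sigma_{\max}^2 \|u\|_2^2\|v\|_2^2\le a\log n/n$, so Bernstein gives tail $\exp(-c\,t^2 / (\log n/n + t\sqrt{\log n/n}))$. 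Choosing $t = c_1\sqrt{\log n}$ with $c_1$ sufficiently large (depending on $a,b$) makes the exponent $\ge Cn$ with $C$ large enough to beat the $5^{2n}$ factor from union-bounding over $\mathcal N\times\mathcal N$, leaving a bound of order $n^{-3}$.

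The heavy part is the main obstacle, since Bernstein is too weak there. I would handle it by a \emph{discrepancy} argument: with probability $1-O(n^{-3})$, the random graph enjoys the property that for every pair of index subsets $S,T\subseteq[n]$, the edge count $e(S,T)$ satisfies $|e(S,T)-\E e(S,T)|\lesssim \sqrt{a\log n\cdot |S||T|} \vee (|S|\vee|T|)\log n$. This follows from a Chernoff bound for each $(S,T)$ combined with a union bound over the $\binom{n}{|S|}\binom{n}{|T|}\le \exp(C(|S|+|T|)\log(n/(|S|\vee|T|)))$ choices, where the combinatorial factor is absorbed by the Chernoff exponent provided the leading constant is chosen appropriately. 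Partitioning the coordinates of $u,v$ into dyadic level sets by magnitude and invoking this discrepancy property on each pair of level sets, one bounds the heavy contribution by $O(\sqrt{\log n})$ uniformly over $u,v$ in the sphere.

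Combining the light and heavy bounds, tuning $c_1$ large enough in terms of $a,b$, and passing back from $\mathcal N$ to the full sphere yields $\P(\|E\|_2\ge c_1\sqrt{\log n})\le c_2 n^{-3}$ as required. The heaviest calculation is the discrepancy step, and in practice one simply invokes Theorem~5.2 of \cite{LRi15} or Theorem~5 of \cite{HWX16}, both of which encapsulate exactly this argument and state the conclusion with the needed probability level; one only needs to verify that the hypotheses (variance bound $a\log n / n$, boundedness of entries, independence above the diagonal) are satisfied, which is immediate from the definition of the SBM.
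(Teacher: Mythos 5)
Your proposal ends exactly where the paper's proof begins and ends: the paper omits any argument and simply invokes Theorem~5.2 of \cite{LRi15} or Theorem~5 of \cite{HWX16} (both built on \cite{FOf05}), after noting that the SBM hypotheses are trivially satisfied, so your fallback citation is precisely the paper's proof and is sufficient. One caveat on your self-contained sketch: the light/heavy split must be taken at $|u_iv_j|\le \sqrt{p/n}\asymp \sqrt{a\log n}/n$, not at $\sqrt{\log n/n}$; with your larger threshold the Bernstein exponent at $t=c_1\sqrt{\log n}$ is only of order $c_1\sqrt{n}$ (the term $R\,t/3$ with $R=\sqrt{\log n/n}$ dominates), which cannot beat the $5^{2n}$ net cardinality, and similarly the heavy-part discrepancy statement needs the Feige--Ofek form (either $e(S,T)\lesssim \E e(S,T)$ or $e(S,T)\log\bigl(e(S,T)/\E e(S,T)\bigr)\lesssim (|S|\vee|T|)\log (n/(|S|\vee|T|))$) rather than a plain square-root deviation bound, since a naive Chernoff-plus-union-bound does not close for large sets.
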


\begin{lem}\label{lem-sbm-l2-linf}
	Let $w\in\R^n$ be a fixed vector, $\{ X_i\}_{i=1}^n$ be independent random variables where $ X_i\sim \Bern(p_i)$. Suppose $p\geq \max\limits_{i} p_i$ and $\alpha\geq 0$. Then,
	\begin{equation}
	\P\Big( \Big| \sum_{i=1}^{n}w_i( X_i-\E X_i)\Big| \geq
	\frac{(2 + \alpha)pn}{ 1 \vee \log( \frac{\sqrt{n} \|w\|_{\infty} }{\| w \|_2}  )} \|w\|_{\infty}
	\Big)\leq 2e^{-\alpha np}.
	\end{equation}
\end{lem}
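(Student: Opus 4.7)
The plan is to apply a Bennett-type tail bound obtained from the moment generating function of Bernoulli variables, then verify that the resulting exponent dominates $\alpha n p$ at the specified threshold. Write $S = \sum_{i=1}^n w_i(X_i - \E X_i)$ and $M = \|w\|_\infty$. Since each summand is a mean-zero variable supported in $[-M,M]$ with variance $w_i^2 p_i(1-p_i) \le w_i^2 p$, the standard MGF bound for bounded centered variables gives $\E e^{t w_i(X_i-p_i)} \le \exp\!\big(w_i^2 p \cdot (e^{tM}-1-tM)/M^2\big)$. By independence,
\[
\E e^{tS} \le \exp\!\Big(\tfrac{V}{M^2}(e^{tM}-1-tM)\Big), \qquad V := p\|w\|_2^2 .
\]
Optimizing the Chernoff bound in $t>0$ yields Bennett's inequality
\[
\P(S \ge x) \le \exp\!\Big(-\tfrac{V}{M^2}\, h\!\big(\tfrac{Mx}{V}\big)\Big), \qquad h(u) := (1+u)\log(1+u) - u,
\]
and applying the same argument to $-S$ with a union bound furnishes the factor of $2$.

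For the second step, let $\beta := \sqrt{n}\,M/\|w\|_2 \in [1,\sqrt{n}]$ and plug in the target threshold $x^* := (2+\alpha) p n M / (1 \vee \log\beta)$. A direct computation gives $Mx^*/V = (2+\alpha)\beta^2/(1\vee\log\beta) =: u$ and $V/M^2 = np/\beta^2$, so the desired conclusion reduces to the purely analytic inequality $h(u) \ge \alpha \beta^2$ for all $\beta \ge 1$ and $\alpha \ge 0$.

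I would verify this in two regimes. When $\beta \le e$, the denominator is $1$ and $u = (2+\alpha)\beta^2$; applying the Bernstein lower bound $h(x) \ge 3x^2/(6+2x)$ reduces the inequality to $(2+\alpha)(6+\alpha)\beta^2 \ge 6\alpha$, which is immediate since $\beta\ge 1$ and $(2+\alpha)(6+\alpha) - 6\alpha = \alpha^2 + 2\alpha + 12 \ge 0$. When $\beta > e$, we have $u = (2+\alpha)\beta^2/\log\beta \ge 2e^2 > e$, so the sharper Bennett-regime bound $h(u)\ge u\log u - u$ is available. Dividing by $\beta^2$ and writing $y=\log\beta\ge 1$, $C=\log(2+\alpha)$, the requirement becomes $(2+\alpha)\,g(y) \ge \alpha$ with
\[
g(y) := 2 + \frac{C - 1 - \log y}{y}.
\]
Differentiating gives $g'(y) = (\log y - C)/y^2$, so $g$ attains its minimum on $[1,\infty)$ at $y = e^C = 2+\alpha$, yielding $g(y) \ge 2 - 1/(2+\alpha)$. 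Consequently $(2+\alpha)g(y) \ge 2(2+\alpha) - 1 = 3 + 2\alpha \ge \alpha$.

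The only nontrivial step is Case~2: one needs a lower bound on $h(u)$ sharp enough to absorb the $\log\beta$ in the denominator of $u$ and still dominate $\alpha\beta^2$ uniformly over all $\beta>e$ and $\alpha\ge 0$. The combination $h(u)\ge u\log u - u$ together with the fact that $g$ is minimized precisely at $y = 2+\alpha$ produces exactly the required cancellation and handles every $(\alpha,\beta)$ in one shot; the remaining ingredients (Chernoff optimization, symmetry for the lower tail, union bound) are routine.
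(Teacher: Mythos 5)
Your proof is correct, and every step checks out: the MGF bound $\E e^{tY_i}\le\exp\bigl(\sigma_i^2(e^{tM}-1-tM)/M^2\bigr)$ is the standard one, the reduction of the claim to $h(u)\ge\alpha\beta^2$ with $u=(2+\alpha)\beta^2/(1\vee\log\beta)$ is computed correctly, and both regimes are handled properly (the algebra in the case $\beta\le e$ reduces to $\alpha^2+2\alpha+12\ge 0$ as you say, and in the case $\beta>e$ the minimum of $g$ at $y=2+\alpha\ge 2$ gives $(2+\alpha)g(y)\ge 3+2\alpha\ge\alpha$). The route differs from the paper's in execution, though both are Chernoff arguments on the same MGF. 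The paper does not optimize over the exponential parameter: after normalizing $\|w\|_\infty=1$ it bounds $\log\E e^{\lambda w_i(X_i-\E X_i)}\le\frac{e^{\lambda}}{2}\lambda^2 w_i^2 p_i$ (via $e^x\le 1+x+\frac{e^r}{2}x^2$ for $|x|\le r$), plugs in the specific choice $\lambda=1\vee\log(\sqrt{n}/\|w\|_2)$, and uses the elementary inequality $1\vee\log x\le\sqrt{x}$ to bound the variance term by $\frac{e}{2}pn$, so the exponent $-(2+\alpha)pn+\frac{e}{2}pn\le-\alpha pn$ drops out in one line with no case analysis. Your version instead carries out the full Bennett optimization and then verifies a purely analytic inequality in the two regimes, which is longer and needs the calculus step locating the minimizer $y=2+\alpha$, but it is self-contained, uses the sharpest exponent this MGF bound can give (making visible that the stated threshold has slack), and cleanly isolates where the $\log$ factor in the denominator matters; the paper's clever suboptimal-$\lambda$ plug-in buys brevity at the cost of looking less transparent about why that particular $\lambda$ works.
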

\begin{proof}[Proof of Lemma \ref{lem-sbm-l2-linf}]
	Without loss of generality we assume $\| w \|_{\infty} =1$, since rescaling does not change the event in $\P( \cdot)$.  Let $S_n=\sum_{i=1}^{n}w_i( X_i-\E X_i)$. Markov's inequality yields
	\begin{equation}\label{ineq:bmconctr}
	\P(S_n \geq t)
	=\P(e^{\lambda S_n}\geq e^{\lambda t})
	\leq e^{-\lambda t}\E e^{\lambda S_n}
	=e^{-\lambda t}\prod_{i=1}^{n}\E e^{\lambda w_i( X_i-\E X_i)}, \qquad \forall \, \lambda>0.
	\end{equation}
	We can bound the logarithm of moment generating function by
	\begin{align*}
		\log\left(\E e^{\lambda w_i( X_i-\E X_i)}\right) &=\log[(1-p_i)+p_i e^{\lambda w_i}]-\lambda w_ip_i \leq p_i(e^{\lambda w_i}-1)-\lambda w_i p_i \leq \frac{e^{\lambda \|w\|_{\infty}}}{2}\lambda^2 w_i^2p_i,
	\end{align*}
	where we applied two inequalities: $\log (1+x)\leq x$ for $x>-1$ and $e^x\leq 1+x+\frac{e^{r}}{2}x^2$ for $|x|\leq r$. We take the logarithm of both sides in \eqref{ineq:bmconctr} and use $p\geq \max\limits_i p_i$, $\| w \|_{\infty} = 1$ to obtain
	\begin{equation*}
		\log \P(S_n \geq t)\leq
		-\lambda t+\sum_{i=1}^{n}\log\left(\E e^{\lambda w_i( X_i-\E X_i)}\right)
		\leq -\lambda t + \frac{p\lambda^2}{2}e^{\lambda}\|w\|_2^2.
	\end{equation*}
	Set $\lambda = 1 \vee \log( \sqrt{n} / \| w \|_2  )$ in the above inequality. Since $\| w \|_2 \le \sqrt{n} \| w \|_{\infty} = \sqrt{n}$, we have $\log( \sqrt{n} / \| w \|_2  ) \ge 0$, and thus $\lambda \le 1 + \log( \sqrt{n} / \| w \|_2  )$. This leads to
	\begin{align*}
		\frac{p\lambda^2}{2}e^{\lambda}\|w\|_2^2 \le \frac{p\lambda^2}{2} e \sqrt{n} \| w\|_2 = \frac{epn}{2} \frac{ \|w\|_2}{\sqrt{n}} \left( 1 \vee \log \Big(\frac{\sqrt{n}}{ \|w \|_2} \Big) \right)^2 \le \frac{epn}{2},
	\end{align*}
	where we used the easily verifiable inequality: $1 \vee \log x \le \sqrt{x}$ for $x \ge 1$. Therefore, when $t = [ 1 \vee \log( \frac{\sqrt{n}}{\| w \|_2}  ) ]^{-1} (2 + \alpha)pn $, we deduce
	\begin{equation*}
		\log \P(S_n \geq t) \le -\left[ 1 \vee \log \Big(\frac{\sqrt{n}}{ \|w \|_2} \Big) \right] t + \frac{epn}{2} \le - (2 + \alpha)pn + \frac{epn}{2} \le -\alpha pn.
	\end{equation*}
	By replacing $w$ by $-w$, we get a similar bound for the lower tail. The proof is then finished by the union bound.
\end{proof}

Now we state a lemma that allows us to control the tail of the difference of Binomial variables. It generalizes a similar lemma in \cite{ABH16}.

\begin{lem}\label{lem-tail}
	Suppose $a>b$, $\{W_i\}_{i=1}^{n/2}$ are i.i.d $\Bern(\frac{a\log n}{n})$, and $\{Z_i\}_{i=1}^{n/2}$ are i.i.d. $\Bern(\frac{b\log n}{n})$, independent of $\{W_i\}_{i=1}^{n/2}$. For any $\varepsilon \in \R$, we have the following tail bound:
	\begin{equation}\label{ineqn::tail}
	\P \Big( \sum_{i=1}^{n/2} W_i - \sum_{i=1}^{n/2} Z_i \le \varepsilon \log n \Big)  \le n^{- (\sqrt{a} - \sqrt{b})^2/2 + \varepsilon \log(a/b)/2 }.
	\end{equation}
\end{lem}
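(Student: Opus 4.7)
The plan is to apply Chernoff's inequality to the lower tail of $S := \sum_{i=1}^{n/2} W_i - \sum_{i=1}^{n/2} Z_i$. For any $t > 0$, Markov's inequality gives
\[
\P(S \le \varepsilon \log n) \;=\; \P\bigl(e^{-tS} \ge e^{-t \varepsilon \log n}\bigr) \;\le\; e^{t \varepsilon \log n}\, \E\bigl[e^{-tS}\bigr],
\]
and by the independence of all $W_i$'s and $Z_j$'s the MGF factors:
\[
\E\bigl[e^{-tS}\bigr] \;=\; \bigl(\E[e^{-tW_1}]\bigr)^{n/2}\,\bigl(\E[e^{tZ_1}]\bigr)^{n/2}.
\]

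Next, using the elementary bound $1 + x \le e^{x}$ on each Bernoulli MGF, with $p = a\log n / n$ and $q = b\log n / n$, I would obtain
\[
\E[e^{-tW_1}] = 1 + p(e^{-t}-1) \le \exp\!\bigl(p(e^{-t}-1)\bigr), \qquad \E[e^{tZ_1}] \le \exp\!\bigl(q(e^{t}-1)\bigr),
\]
which, after multiplying and taking logarithms, yields
\[
\log \P(S \le \varepsilon \log n) \;\le\; t\varepsilon \log n \;+\; \frac{\log n}{2}\,\bigl(a(e^{-t}-1) + b(e^{t}-1)\bigr).
\]

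The final step is to choose $t^\star := \tfrac{1}{2}\log(a/b)$, which is positive since $a > b$. With this choice one has $e^{t^\star} = \sqrt{a/b}$ and $e^{-t^\star} = \sqrt{b/a}$, so $a\, e^{-t^\star} + b\, e^{t^\star} = 2\sqrt{ab}$, and therefore
\[
a(e^{-t^\star}-1) + b(e^{t^\star}-1) \;=\; 2\sqrt{ab} - a - b \;=\; -(\sqrt{a}-\sqrt{b})^{2}.
\]
Substituting back gives
\[
\log \P(S \le \varepsilon \log n) \;\le\; \log n \left( \frac{\varepsilon \log(a/b)}{2} - \frac{(\sqrt{a}-\sqrt{b})^{2}}{2} \right),
\]
which is exactly \eqref{ineqn::tail} after exponentiation.

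There is no real obstacle: the argument is a textbook Chernoff bound. The only subtle point worth flagging is that $t^\star = \tfrac{1}{2}\log(a/b)$ is the exact minimizer of the exponent only when $\varepsilon = 0$; for general $\varepsilon$ it is merely a convenient choice, but it suffices precisely because the identity $2\sqrt{ab} - a - b = -(\sqrt{a}-\sqrt{b})^{2}$ extracts the Hellinger-type rate that the lemma claims. One should also verify that $t^\star > 0$ so that the direction of Markov's inequality for the lower tail is correct, which holds by the hypothesis $a > b$.
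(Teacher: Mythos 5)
Your proof is correct and is essentially the paper's own argument: the paper applies Markov's inequality with the exponential tilt $\lambda=-\log(a/b)/2<0$ (i.e.\ your $-t^\star$), bounds each Bernoulli moment generating function via $\log(1+x)\le x$, and uses the same identity $2\sqrt{ab}-a-b=-(\sqrt{a}-\sqrt{b})^2$. No gaps; the sign check on $t^\star$ that you flag is exactly what makes the lower-tail direction of the Chernoff bound valid.
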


\begin{proof}[Proof of Lemma~\ref{lem-tail}]
	Let $\lambda = - \log(a/b)/2 < 0$, and we apply Markov's inequality to the moment generating function,
	\begin{align*}
		\P \Big( \sum_{i=1}^{n/2} W_i - \sum_{i=1}^{n/2} Z_i \le \varepsilon \log n \Big)
		&= \P \Big( e^{ \lambda \sum_{i=1}^{n/2} (W_i - Z_i)} \ge e^{\lambda \varepsilon \log n} \Big)
		\le n^{-\lambda \varepsilon} \, \E  e^{ \lambda \sum_{i=1}^{n/2} (W_i - Z_i)}. \label{ineqn::tailMarkov}
	\end{align*}
	Observe that $e^\lambda = \sqrt{b/a}$, so
	\begin{equation*}
		\log \E e^{\lambda W_i} = \log(e^\lambda \cdot a\log n/n + 1 - a\log n/n) \le (\sqrt{ab} - a) \log n / n,
	\end{equation*}
	where we used the fact that $\log(1 + x) \le x$ for any $x > -1$. Similarly we have $\log \E e^{-\lambda Z_i}  \le (\sqrt{ab} - b) \log n / n$. By independence, this leads to
	\begin{align*}
		& ~~~~ \log \P \Big( \sum_{i=1}^{n/2} W_i - \sum_{i=1}^{n/2} Z_i \le \varepsilon \log n \Big)
		\le \varepsilon \log(a/b)/2 \cdot  \log n + \frac{n}{2} \big( \log \E e^{\lambda W_i} + \log \E e^{-\lambda Z_i} \big) \\
		&\le  \big[ \varepsilon \log(a/b)/2 - (\sqrt{a} - \sqrt{b})^2/2  \big]  \cdot \log n,
	\end{align*}
	which is exactly the desired inequality (\ref{ineqn::tail}).
\end{proof}

\begin{proof}[\bf Proof of Theorem~\ref{thm-sbm}]
	(i) Since $\sqrt{a} - \sqrt{b} > \sqrt{2}$, we can choose some $\veps = \veps(a,b)>0$ such that $ (\sqrt{a} - \sqrt{b})^2/2 - \varepsilon \log(a/b)/2 > 1$. Let $s \in \{ \pm 1\}$ be such that $\| u_2 - sAu_2^*/\lambda_2^*\|_{\infty}$ is minimized. By Corollary~\ref{cor-sbm2}, with probability $1-o(1)$,
	\begin{equation*}
		\sqrt{n} \,  \min_{i\in [n]} s z_i(u_2)_i \ge \sqrt{n} \,  \min_{i\in [n]} s^2 z_i (A u_2^*)_i/\lambda_2^* - C(\log \log n)^{-1},
	\end{equation*}
	where $C$ is defined in Corollary~\ref{cor-sbm2}. Note that $s^2=1$. Also observe that $(A u_2^*)_i /\lambda_2^* = \frac{2}{(a-b)\sqrt{n}\,\log n} \big( \sum_{j \in J} A_{ij} - \sum_{j \in J^c} A_{ij}  \big)$, and thus, for any $i \in [n]$,
	\begin{equation*}
		\sqrt{n} \,  z_i (A u_2^*)_i/\lambda_2^* = \frac{2}{(a-b)\log n} \big( \sum_{i \sim j}A_{ij} - \sum_{i \not\sim j}A_{ij} \big),
	\end{equation*}
	where $i \sim j$ means $i,j \in J$ or $i,j \in J^c$, and $i \not\sim j$ otherwise. Applying Lemma \ref{lem-tail}, we derive
	\begin{equation*}
		\P \Big( \sqrt{n} \,  z_i (A u_2^*)_i/\lambda_2^* \le \frac{2 \veps}{a-b} \Big) \le n^{-(\sqrt{a} - \sqrt{b})^2/2 + \varepsilon \log(a/b)/2} = o(n^{-1}).
	\end{equation*}
	By the union bound, we deduce that with probability $1-o(1)$, for sufficiently large $n$,
	\begin{equation*}
		\sqrt{n} \,  \min_{i\in [n]} s z_i(u_2)_i \ge \frac{2 \veps}{a-b} - o(1) \ge \frac{ \veps}{a-b}.
	\end{equation*}
	Setting $\eta = \veps(a,b) / (a-b)$, we finish the proof of part (i).
	
	(ii) Let us fix an arbitrary $\veps_0 > 0$ and denote $\eta_0 = [ (a-b) \log(a/b)/2]^{-1} \veps_0$, which is positive. Let $C(a,b)$ be the constant in Corollary~\ref{cor-sbm2}, and $B_n$ be the event that \eqref{ineq:sbm-approx} holds. Also let $s_0 \in \{ \pm 1\}$ be such that $\| u_2 - s Au_2^*/\lambda_2^*\|_{\infty}$ is minimized. When $n$ is large enough such that $C(a,b) \le \eta_0 \log \log n$, under $B_n$, we have $ \| u_2 - s_0 A u_2^*/\lambda_2^* \|_{\infty} \le \eta_0 / \sqrt{n}$. Thus, for all $i \in [n]$,
	\begin{align*}
		\{ \hat z_i \neq s_0 z_i \} &\subseteq \{ s_0 z_i  (u_2)_i  \le 0 \} \subseteq B_n^c \cup \{  s_0^2 z_i (A u_2^*)_i /\lambda_2^* \le \eta_0  / \sqrt{n} \} \\
		&= B_n^c \cup \{  z_i (A u_2^*)_i /\lambda_2^* \le \eta_0  / \sqrt{n} \}.
	\end{align*}
	As argued before, $(A u_2^*)_i /\lambda_2^* = \frac{2}{(a-b)\sqrt{n}\,\log n} \big( \sum_{j \in J} A_{ij} - \sum_{j \in J^c} A_{ij}  \big)$, so Lemma \ref{lem-tail} yields
	\begin{align*}
		\log \P \left( z_i (A u_2^*)_i /\lambda_2^* \le \eta_0  / \sqrt{n} \right) &= \log \P \Big( \frac{2}{(a-b)\log n} \big( \sum_{i \sim j}A_{ij} - \sum_{i \not\sim j}A_{ij} \big)
		\le \eta_0   \Big) \\
		& \le - \frac{ (\sqrt{a} - \sqrt{b})^2\log n}{2} + \frac{(a-b)\eta_0 \log n}{2}  \cdot \frac{\log(a/b)}{2}  \\
		& = \Big(- \frac{ (\sqrt{a} - \sqrt{b})^2}{2} + \frac{\veps}{2} \Big) \log n.
	\end{align*}
	Therefore, we can bound the expectation of misclassification rate as follows:
	\begin{align*}
		\E r(\hat z,z) &= \frac{1}{n} \sum_{i=1}^n \P \left( B_n^c \cup \{  z_i (A u_2^*)_i /\lambda_2^* \le \eta_0  / \sqrt{n} \}  \right) \\
		&\le \P(B_n^c) +  \frac{1}{n} \sum_{i=1}^n \P \left( z_i (A u_2^*)_i /\lambda_2^* \le \eta_0  / \sqrt{n}  \right) \\
		&\le \P(B_n^c) + n^{ -(\sqrt{a} - \sqrt{b})^2/2 + \veps/2 }.
	\end{align*}
	By Corollary~\ref{cor-sbm2}, $\P(B_n^c) = O(n^{-3})$, which is smaller than $n^{ -(\sqrt{a} - \sqrt{b})^2/2}$ order-wise if $\sqrt{a} - \sqrt{b} \in (0,\sqrt{2}]$. Thus, for sufficiently large $n$, we have $\E r(\hat z,z)  \le n^{ -(\sqrt{a} - \sqrt{b})^2/2 + \veps }$. This leads to the desired inequality.
\end{proof}

\begin{proof}[\bf Proof of Theorem \ref{thm::sbm-lowerbound}]
	Recall that $(u_2^*)_i = 1/\sqrt{n}$ for $i\in[n/2]$ and $-1/\sqrt{n}$ otherwise. Hence,
	\[
	(A u_2^*)_i = \frac{1}{\sqrt{n} } \sum_{j\leq n/2} A_{ij} - \frac{1}{\sqrt{n} } \sum_{j>n/2} A_{ij}
	,\qquad \forall  i\in[n/2].
	\]
	Then, Lemma \ref{lem-sbm-fluctuation} below states $\lim_{n\to \infty} \P ( \sqrt{ n } \max_{i\in[n/2]} (A u_2^*)_i \geq \frac{a \eta - b}{2} \log n ) = 1$. As a result, from $\lambda_2^* = \frac{a-b}{2} \log n$ we obtain that with probability $1-o(1)$,
	\begin{align*}
		\sqrt{n} \| A u_2^* / \lambda_2^* - u_2^* \|_{\infty} \geq \sqrt{n} \max_{i\in[n/2]} \left\{  ( A u_2^* / \lambda_2^* - u_2^* )_i \right\}
		= \sqrt{n} \max_{i\in[n/2]}  ( A u_2^* )_i/ \lambda_2^* -1 \geq
		\frac{a(\eta -1)}{a-b}.
	\end{align*}
	By Corollary~\ref{cor-sbm2}, the proof is then finished.
\end{proof}

Now we present Lemma \ref{lem-sbm-fluctuation} and its proof. Define $h( t ) = t \log t - t + 1$ for $t>0$. Recall that in the proof of Theorem~\ref{thm-sbm}, we used the union bound
\begin{lem}\label{lem-sbm-fluctuation}
	Let $A \sim SBM (n, a\frac{\log n}{n}, b \frac{\log n}{n} , [n/2] )$, where $a>0$, $b>0$ are constants and $n\to \infty$. For fixed $ \eta > 1$ with $ h(\eta)<2/a$, with probability $1-o(1)$ we have
	\begin{align*}
		\max_{i \in [n/2]} \left\{ \sum_{j\leq n/2} A_{ij} - \sum_{j>n/2} A_{ij} \right\}
		\geq
		\frac{ a \eta  -b }{2} \log n.
	\end{align*}
\end{lem}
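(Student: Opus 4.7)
Fix $i \in [n/2]$ and write the row-$i$ quantity in the max as $S_i := T_i - U_i$, where $T_i = \sum_{j \le n/2} A_{ij}$ and $U_i = \sum_{j > n/2} A_{ij}$. Set $\mu_a := a\log n/2$ and $\mu_b := b\log n/2$. Since the two sums involve disjoint edges, $T_i$ and $U_i$ are independent, with $\E T_i \approx \mu_a$ and $\E U_i = \mu_b$. Introduce the indicator
\begin{equation*}
X_i := \mathbf{1}\{T_i \ge k\} \cdot \mathbf{1}\{U_i \le \lfloor \mu_b \rfloor\}, \qquad k := \lceil \eta \mu_a \rceil.
\end{equation*}
Whenever $X_i = 1$ for some $i$, $S_i \ge k - \lfloor \mu_b \rfloor \ge (a\eta - b)\log n / 2$, so it suffices to show that $X := \sum_{i \le n/2} X_i \ge 1$ with probability $1-o(1)$, which I will do via the second-moment method (Chebyshev).

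\textbf{First moment.} Bounding the Binomial tail from below by a single term and applying Stirling's formula,
\begin{equation*}
\P(T_i \ge k) \ge \P(T_i = k) = \binom{n/2}{k} p_a^k(1-p_a)^{n/2-k} \ge \frac{c_0}{\sqrt{\log n}}\exp(-\mu_a h(\eta)) = \frac{c_0\, n^{-a h(\eta)/2}}{\sqrt{\log n}},
\end{equation*}
with $p_a = a\log n/n$, where the exponent matches $\mu_a (\eta \log \eta - \eta + 1) = \mu_a h(\eta)$. Since $U_i \sim \mathrm{Bin}(n/2, b\log n/n)$ has a growing mean $\mu_b$ and is asymptotically normal, $\P(U_i \le \lfloor\mu_b\rfloor) \ge 1/2 - o(1)$. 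Independence of $T_i$ and $U_i$ then gives $\E X_i \gtrsim n^{-ah(\eta)/2}/\sqrt{\log n}$, so
\begin{equation*}
\E X = (n/2)\,\E X_1 \gtrsim n^{1 - a h(\eta)/2}/\sqrt{\log n} \to \infty
\end{equation*}
polynomially, where the hypothesis $h(\eta) < 2/a$ is exactly what makes $1 - ah(\eta)/2 > 0$.

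\textbf{Second moment and conclusion.} For $i \ne j$, the Bernoullis entering $X_i$ and $X_j$ are jointly independent except for the single shared edge $A_{ij}$ appearing in both $T_i$ and $T_j$ (the $U$-parts involve disjoint edges across the two rows). Writing $T_i = T_i' + A_{ij}$ and $T_j = T_j' + A_{ij}$ with $T_i', T_j', A_{ij}$ jointly independent, conditioning on $A_{ij}$ yields
\begin{equation*}
\mathrm{Cov}(\mathbf{1}\{T_i \ge k\}, \mathbf{1}\{T_j \ge k\}) = p_a(1-p_a)\,[\P(T_1' = k-1)]^2.
\end{equation*}
A direct ratio computation gives $\P(T_1' = k-1)/\P(T_1' = k) \to \eta$, and since $\eta > 1$ forces geometric decay of the Binomial pmf past $k$, $\P(T_1' \ge k) \asymp \P(T_1' = k)$. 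Hence $\P(T_1' = k-1) = O(\E\mathbf{1}\{T_i \ge k\})$, and factoring the covariance over the independent $U$-part gives $\mathrm{Cov}(X_i, X_j) = O(p_a)(\E X_1)^2$. Summing,
\begin{equation*}
\mathrm{Var}(X) \le \E X + O(p_a)(\E X)^2, \qquad \mathrm{Var}(X)/(\E X)^2 \le 1/\E X + O(p_a) = o(1),
\end{equation*}
and Chebyshev's inequality yields $\P(X = 0) \to 0$, which is exactly the claim. \textbf{Main obstacle:} the delicate step is the first-moment lower bound---the exponent must match $-\mu_a h(\eta)$ precisely so that the hypothesis $h(\eta) < 2/a$ is tight for $\E X \to \infty$. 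The covariance calculation is then routine because each pair of rows shares only one edge, producing a benign $p_a$-factor that is absorbed into the variance bound.
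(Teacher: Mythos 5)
Your proof is correct, but it takes a genuinely different route from the paper. The paper's concern is exactly the dependence you isolate: for $i\neq j\leq n/2$ the sums share the edge $A_{ij}$. The paper sidesteps this by sacrificing a constant fraction of the data: it picks $\varepsilon\in(0,1)$ with $h\bigl(\eta/(1-2\varepsilon)\bigr)<2/a$ (possible by continuity of $h$), restricts to rows $i\in[\varepsilon n/2]$ and column sums over $j\in(\varepsilon n/2, n/2]$ so that the resulting variables are genuinely i.i.d., proves a per-row lower bound of order $n^{-1+\delta}$ via an exponential tilting (change-of-measure) large-deviation lemma plus a Markov/variance bound on the cross-block sum, and then concludes by raw independence, getting failure probability $e^{-\varepsilon n^{\delta}/4}$. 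You instead keep all $n/2$ rows and run a second-moment argument: the covariance of the indicator events for a pair of rows reduces, by conditioning on the single shared Bernoulli, to $p_a(1-p_a)\left[\P(T_1'=k-1)\right]^2$, which the geometric decay of the binomial pmf past $k=\lceil\eta\mu_a\rceil$ (valid since $\eta>1$) shows is $O(p_a)(\E X_1)^2$, so Chebyshev finishes. Your large-deviation lower bound is a direct point-mass/Stirling estimate rather than tilting; note that since $h(\eta)<2/a$ is strict, you only need the exponent up to a $(1+o(1))$ factor, so this step is less delicate than your closing remark suggests. What each approach buys: the paper's trick avoids any covariance computation and yields an exponentially small failure probability, at the cost of the $\varepsilon$-dilation of $\eta$ and the separate tilting lemma; yours uses the full data, needs no perturbation of $\eta$, and is arguably more self-contained, but only delivers a polynomially small failure probability (of order $1/\E X + \log n/n$), which is nonetheless all the lemma requires.
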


\begin{proof}[Proof of Lemma \ref{lem-sbm-fluctuation}]
	First we make the following observation. If $\{ \sum_{j\leq n/2} A_{ij} - \sum_{j>n/2} A_{ij} \}_{i=1}^{n/2}$ were independent, then we could prove the claim by showing that
	\[
	n \cdot \P \left(
	\sum_{j\leq n/2} A_{ij} - \sum_{j>n/2} A_{ij}
	\geq
	\frac{ a\eta  -b }{2} \log n
	\right) \to \infty,\qquad \forall i\in[n/2],
	\]
	with the help of large deviation inequality (Lemma \ref{lem::largedev}). Unfortunately, these random variables $\sum_{j\leq n/2} A_{ij} - \sum_{j>n/2} A_{ij}$ are dependent across $i \in [n]$ , due to symmetry. To tackle this issue, we borrow the idea in \cite{ABH16} for proving information-theoretic lower bound. We will find some appropriate $\varepsilon \in (0,1)$ and work with $\{ \sum_{j>\varepsilon n/2 }^{n/2} A_{ij} - \sum_{j>n/2} A_{ij} \}_{i=1}^{\varepsilon n/2}$ instead of $\{ \sum_{j\leq n/2} A_{ij} - \sum_{j>n/2} A_{ij} \}_{i=1}^{n/2}$, since in the former set, the $\varepsilon n/2$ variables are independent.
	
	Now we begin our proof. Since $h(\cdot)$ is continuous, we can find $\varepsilon \in (0,1)$ such that $h( \frac{\eta}{1-2\varepsilon} )<2/a$. By letting $\zeta = \eta/(1-2\varepsilon)$ we have $(1-\varepsilon) h(\zeta) < h(\zeta) <2/a$. It suffices to show that
	\begin{align}\label{eqn-sbm-lower-bound-lem-3}
		\P \left(
		\max_{i \in [n/2]} \left\{ \sum_{j\leq n/2} A_{ij} - \sum_{j>n/2} A_{ij} \right\}
		\geq
		\frac{ (1-2\varepsilon)a\zeta  -b }{2} \log n
		\right) = 1-o(1).
	\end{align}
	
	Define $S_i = \sum_{j > \varepsilon n/2} ^{n/2} A_{ij} $ and $T_i =  \sum_{j>n/2} A_{ij} $ for $i\in[\varepsilon n/2]$. We claim that there exists $\delta>0$ such that for large $n$,
	\begin{align}
		&\P \left(
		S_i \geq (1-\varepsilon)
		\frac{a\zeta }{2}\log n
		\right) \geq  n^{-1 + \delta} , \label{eqn-sbm-lower-bound-lem-1} \\
		& \P \left(
		T_i - \frac{b}{2} \log n \leq
		\frac{a\zeta \varepsilon }{ 2 } \log n
		\right) \geq \frac{1}{2}. \label{eqn-sbm-lower-bound-lem-2}
	\end{align}
	First, let us use them to prove the desired result. Obviously, we have $\sum_{j\leq n/2} A_{ij} - \sum_{j>n/2} A_{ij} \geq S_i - T_i$ over $i\in [ \varepsilon n/2 ]$. Hence we just need to show that
	\begin{align}\label{eqn-sbm-lower-bound-lem-4}
		\P \left( \max_{i \in [\varepsilon  n/2 ]} \{ S_i - T_i \} < \frac{ (1- 2 \varepsilon ) a \zeta -b }{2} \log n \right) = o(1).
	\end{align}
	For $i\in[\varepsilon n/2]$, $S_i$ and $T_i$ are independent. Then
	\begin{align}
		&\P \left(  S_i - T_i \geq \frac{(1- 2 \varepsilon  )  a \zeta -b }{2} \log n \right)\geq
		\P \left(  S_i \geq(1-\varepsilon ) \frac{a \zeta  }{2} \log n,~~
		T_i - \frac{b}{2} \log n \leq \frac{a \zeta \varepsilon  }{2} \log n
		\right) \notag \\
		& \geq
		\P \left(  S_i \geq (1-\varepsilon ) \frac{a \zeta }{2} \log n \right) \cdot \P \left(
		T_i - \frac{b}{2} \log n \leq \frac{a \zeta \varepsilon  }{2} \log n
		\right) \geq \frac{1}{2 n ^{1-\delta}}, \notag
	\end{align}
	where the last inequality follows from (\ref{eqn-sbm-lower-bound-lem-1}) and (\ref{eqn-sbm-lower-bound-lem-2}). Note that $\{ S_i - T_i \}_{i=1}^{\varepsilon n/2}$ are i.i.d. This leads to
	\begin{align}
		&\P \left( \max_{i \in [\varepsilon  n/2 ]} \{ S_i - T_i \}  < \frac{ (1- 2 \varepsilon  ) a \zeta -b }{2} \log n \right) \leq
		\left( 1 - \frac{1}{2 n ^{1-\delta}} \right) ^{\varepsilon n/2}  \leq  e^{-\varepsilon n^{\delta} /4}. \notag
	\end{align}
	Hence (\ref{eqn-sbm-lower-bound-lem-4}) is proved, so is (\ref{eqn-sbm-lower-bound-lem-3}). Now we come to (\ref{eqn-sbm-lower-bound-lem-1}) and (\ref{eqn-sbm-lower-bound-lem-2}).
	
	Fix $i\in[\varepsilon n/2]$. By applying Lemma \ref{lem::largedev} to $\{ A_{ij} \}_{j> \varepsilon n/2}^{n/2}$, we have $N=(1-\varepsilon) n/2$, $p_N = a \frac{\log n}{n}$ and
	\[
	\liminf _{n\rightarrow\infty} \frac{ \log \P \left( S_i  \geq (1-\varepsilon) \frac{ a \zeta }{2} \log n \right) }{(1-\varepsilon) \frac{a}{2} \log n }  \geq - h(\zeta).
	\]
	Note that $(1-\varepsilon) h(\zeta) a/2 <1$. We can find $\delta>0$ such that $\log \P \left( S_i  \geq (1-\varepsilon) \frac{ a \zeta }{2} \log n \right) \geq (-1+\delta) \log n$ when $n$ is large, proving (\ref{eqn-sbm-lower-bound-lem-1}).
	Finally, the claim (\ref{eqn-sbm-lower-bound-lem-2}) follows from the fact $\mathrm{var} T_i \leq \E T_i = \frac{b}{2} \log n$ and the Markov's inequality.
	
\end{proof}

\begin{lem}[Large deviation result] \label{lem::largedev}
	Suppose $\{ \xi_i \}_{i=1}^N$ are i.i.d.\ Bernoulli random variables taking values in $\{0,1\}$ with $\E \xi_i = p_N$. Assume that $p_N \to 0$ and $Np_N \to \infty$ as $N\to \infty$. We have
	\begin{equation*}
		\liminf_{N \to \infty} \frac{1}{Np_N}\log \P \Big( \sum_{i=1}^N \xi_i \ge \eta Np_N \Big) \ge - h(\eta), \qquad \forall \, \eta > 1.
	\end{equation*}
\end{lem}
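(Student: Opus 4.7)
The plan is to use Cram\'er's exponential change-of-measure (tilting). Set $q_N = \eta p_N$, and let $\mathbb{Q}_N$ denote the product law under which the $\xi_i$ are i.i.d.\ $\Bern(q_N)$. Since $\eta > 1$ is fixed and $p_N \to 0$, we still have $q_N \to 0$ and $Nq_N = \eta N p_N \to \infty$. Writing $S_N = \sum_{i=1}^N \xi_i$, the Radon--Nikodym derivative of the sample is
\[
L_N := \frac{d\P}{d\mathbb{Q}_N} = \eta^{-S_N}\left(\frac{1-p_N}{1-q_N}\right)^{N-S_N},
\]
so $\P(S_N \ge \eta N p_N) = \E_{\mathbb{Q}_N}[L_N \mathbf{1}_{\{S_N \ge \eta Np_N\}}]$. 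This is the natural way to reduce a lower tail-probability question under $\P$ to a calculation at the mean of $\mathbb{Q}_N$.

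Next, I would lower-bound the expectation by restricting the indicator to the narrow window $\mathcal{A}_N = \{\eta N p_N \le S_N \le \eta N p_N + \sqrt{N q_N}\}$. Under $\mathbb{Q}_N$, $S_N$ has mean $Nq_N = \eta N p_N$ and variance $Nq_N(1-q_N) \sim \eta N p_N$; since $Nq_N \to \infty$, a standard CLT (or Berry--Esseen) for Binomials gives $\liminf_N \mathbb{Q}_N(\mathcal{A}_N) \ge \Phi(1) - \tfrac{1}{2} > 0$. On $\mathcal{A}_N$, writing $S_N = \eta N p_N + r_N$ with $r_N \in [0, \sqrt{\eta N p_N}]$, one has
\[
-S_N \log \eta = -\eta N p_N \log \eta + o(N p_N),
\]
since $r_N \log \eta = O(\sqrt{N p_N}) = o(N p_N)$. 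Using the Taylor expansion $\log\frac{1-p_N}{1-q_N} = (\eta - 1)p_N + O(p_N^2)$ together with $N - S_N = N + O(N p_N)$ and $N p_N^2 = o(N p_N)$,
\[
(N - S_N) \log\frac{1-p_N}{1-q_N} = (\eta - 1) N p_N + o(N p_N).
\]
Summing these two contributions gives $\log L_N = -N p_N\,[\eta \log \eta - \eta + 1] + o(N p_N) = -N p_N\, h(\eta) + o(N p_N)$, uniformly on $\mathcal{A}_N$. Combining,
\[
\P(S_N \ge \eta N p_N) \;\ge\; e^{-N p_N\, h(\eta) + o(N p_N)}\cdot \mathbb{Q}_N(\mathcal{A}_N),
\]
and taking logarithms, dividing by $N p_N$, and letting $N \to \infty$ produces exactly the claimed $\liminf \ge -h(\eta)$; the $(N p_N)^{-1}\log \mathbb{Q}_N(\mathcal{A}_N)$ term vanishes because $\mathbb{Q}_N(\mathcal{A}_N) \ge c > 0$ eventually and $N p_N \to \infty$.

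The only obstacle is the bookkeeping of negligible terms, not any conceptual difficulty: one must verify uniformly on $\mathcal{A}_N$ that both $r_N\log\eta = O(\sqrt{N p_N})$ and $N p_N^2$ are $o(N p_N)$ (immediate from $N p_N \to \infty$ and $p_N \to 0$), and that the lower bound $\mathbb{Q}_N(\mathcal{A}_N) \gtrsim 1$ holds---for which even a crude Paley--Zygmund-style anti-concentration argument suffices since $\mathrm{Var}_{\mathbb{Q}_N}(S_N) \sim N q_N \to \infty$. No finer large-deviation machinery is needed.
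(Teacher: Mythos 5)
Your proof is correct, and at its core it is the same Cram\'er-type exponential change of measure that the paper uses; the difference is only in how the tilting and the window are set up. You tilt exactly to the target mean $q_N=\eta p_N$ and restrict to a one-standard-deviation window $[\eta Np_N,\eta Np_N+\sqrt{Nq_N}]$, which forces you to invoke a CLT/Berry--Esseen (or some genuine anti-concentration) statement to get $\mathbb{Q}_N(\mathcal{A}_N)\gtrsim 1$; in exchange you read off the exponent $-h(\eta)$ directly, with no further limiting argument. The paper instead over-tilts to mean $\tilde\eta p_N$ with $\tilde\eta>\eta$ and uses a window of width $2\veps Np_N$, i.e.\ of order $Np_N\gg\sqrt{Np_N}$, so that a plain Chebyshev bound already gives $\mathbb{Q}(A_N)\to 1$; the price is the exponent $-h(\tilde\eta)-\veps\log\tilde\eta$, recovered as $-h(\eta)$ only after sending $\veps\to 0$ and $\tilde\eta\to\eta$. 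Both routes are sound; yours is slightly more direct, the paper's is more elementary in its probabilistic input. One small caveat in your write-up: Paley--Zygmund by itself does not control the two-sided window $\{\eta Np_N\le S_N\le \eta Np_N+\sqrt{Nq_N}\}$; if you want to avoid Berry--Esseen, you should either widen the window to $K\sqrt{Nq_N}$ and combine a binomial median bound ($\mathbb{Q}_N(S_N\ge Nq_N)\ge 1/2-o(1)$) with Chebyshev for the upper edge, or simply keep the CLT argument you state first --- the $o(Np_N)$ bookkeeping is unaffected either way.
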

\begin{proof}[Proof of Lemma \ref{lem::largedev}]
	First we fix $\eta>1$, $ \tilde \eta > \eta$ and $\veps \in (0, (\tilde \eta - \eta)/2)$. Define $S_N = \sum_{i=1}^N \xi_i  $ and $A_N = \{ S_N \in [ (\tilde \eta - \veps) N p_N, (\tilde \eta + \veps) N p_N] \}$. Observe that for $\lambda_N \ge 0$,
	\begin{align*}
		& \P ( S_N \geq \eta Np_N ) \geq
		\P (A_N) = \P \big(e^{\lambda_N S_N } \in [ e^{\lambda_N(\tilde \eta - \veps) N p_N}, e^{\lambda_N(\tilde \eta + \veps) N p_N}] \big) \\
		&\ge e^{ -\lambda_N(\tilde \eta + \veps) N p_N }  \E \left( e^{\lambda_N S_N } \mathbf{1}_{  A_N  } \right).
	\end{align*}
	We now play a ``change of measure" trick. Define $\varphi ( t ) = \E e^{t \xi_1} = p_N e^t + (1-p_N)$, $\forall t \in \R$. In the probability space $( \Omega, \mathcal{F} , \P )$ where $\{ \xi_i \}_{i=1}^N$ live in, we define another probability measure $\mathbb{Q}$ through its Radon-Nikodym derivative $\mathrm{d} \mathbb{Q} / \mathrm{d} \P = e^{\lambda_N S_N }/ \varphi^N (\lambda_N)$. Then
	\begin{align*}
		\E _{\P} \left( e^{\lambda_N S_N } \mathbf{1}_{ A_N } \right)
		=  \varphi ^N ( \lambda_N ) \E_{\P} \left(  \frac{ \mathrm{d} \mathbb{Q} }{ \mathrm{d} \P } \mathbf{1}_{ A_N  } \right)
		=\varphi ^N ( \lambda_N ) \mathbb{Q} ( A_N ).
	\end{align*}
	Thus, we obtain the following lower bound
	\begin{align}
		\P ( S_N \geq \eta Np_N ) \ge \exp \left( -\lambda_N(\tilde \eta + \veps) N p_N + N \log \varphi(\lambda_N)  \right)  \mathbb{Q}(A_N).\label{ineq:ANbound}
	\end{align}
	Now we set the value of $\lambda_N$ by letting $\E_{ \mathbb{Q}} \xi_i =  \tilde \eta p_N$. Note that under the measure $\mathbb{Q}$, $\xi_i$ is a Bernoulli random variable with $\E_{ \mathbb{Q}} \xi_i = \mathbb{Q} ( \xi_i =1 ) = p_N e^{\lambda_N}/ \varphi(\lambda_N) $. We solve the equation and get $\lambda_N = \log\left( (1-p_N) \tilde \eta \right) - \log(1 - \tilde \eta p_N)$, which is always nonnegative as $\tilde \eta >1$.
	
	On the one hand, from $Np_N\to \infty$ and $\mathrm{var}_{\mathbb{Q}} S_N \leq \E_{\mathbb{Q}} S_N = \tilde \eta Np_N $ we get $\mathbb{Q} ( A_N ) \to 1$. On the other hand, the assumption $p_N = o(1)$ yields
	\begin{align*}
		&\lambda_N = \log \tilde \eta + p_N( \tilde \eta -1 + o(1) ) = \log \tilde \eta + o(1),\\
		&\log \varphi (\lambda_N) =  \lambda_N - \log \tilde \eta = p_N(  \tilde \eta -1 + o(1)).
	\end{align*}
	These estimates and \eqref{ineq:ANbound} lead to
	\begin{align*}
		&\liminf_{N \to \infty}
		\frac{1}{N p_N} \log \P ( S_N \geq \eta N p_N )
		\geq
		\lim_{N \to \infty} \left(	-\lambda_N(\tilde \eta + \veps)  + \frac{1}{p_N} \log \varphi(\lambda_N) + \frac{ \mathbb{Q}(A_N) }{ N p_N } \right) \\
		&=  -(\tilde \eta + \veps) \log \tilde \eta + \tilde \eta - 1
		= -h(\tilde \eta) - \veps \log \tilde \eta.
	\end{align*}
	Taking $\varepsilon \to 0$ and then $\tilde \eta \to \eta$, we finally obtain the desired result.
\end{proof}

\subsection{Proofs for matrix completion from noisy entries}

We first introduce $\mbox{\rm SNMC}(A^*,p,\sigma)$, a symmetric version of noisy matrix completion problem. Theorem \ref{thm-main} becomes applicable in this case, and we derive preliminary results in \ref{appendix-SNMC-checking}. Then we show in \ref{appendix-SNMC-reduction} how to go from here to $\mbox{\rm NMC}(M^*,p,\sigma)$, and bound reconstruction errors in \ref{appendix-SNMC-errors}.

\begin{defn}\label{def-SMC}
	Let $A^*\in\R^{n\times n}$ be symmetric, $p\in[0,1]$ and $\sigma \geq 0$. $\mbox{\rm SNMC} (A^*,p,\sigma )$ is the ensemble of $n\times n$ symmetric random matrices $A=(A_{ij})_{i,j\in[n]}$ with $A_{ij}=( A_{ij}^* + \varepsilon_{ij} )I_{ij}/p$, where $\{ I_{ij}, \varepsilon_{ij} \}_{1\leq i\leq j\leq n}$ are jointly independent, $\P ( I_{ij}=1 )=p=1- \P ( I_{ij} =0 )$ and $\varepsilon_{ij} \sim N(0,\sigma^2)$.
\end{defn}

\subsubsection{Analysis of $\mbox{\rm SNMC} (A^*,p,\sigma )$}\label{appendix-SNMC-checking}
Let us stick to the model $\mbox{\rm SNMC} (A^*,p,\sigma )$ and forget about $\mbox{\rm NMC} (M^*,p,\sigma )$ for a moment. Should there be no further specification, the quantities in this section are defined as in Section \ref{regularity-conditions}, not Section \ref{sec:NMC}.
To facilitate analysis let us define $\bar{A}=( \bar{A}_{ij})_{i,j\in[n]}$ with $\bar{A}_{ij}= A_{ij}^* I_{ij}/p$, where $I_{ij}$ is the same as in Definition \ref{def-SMC}.
\begin{defn}\label{def-SMC-parameters}
	Let $c_1$ be the constant in Lemma \ref{lem-SMC-spectral}. Define $\bar{\kappa}=n \|A^*\|_{\max} / \Delta^*$ and
	\begin{enumerate}
		\item $ \bar \gamma = \frac{ c_1 \bar{\kappa} }{\sqrt{ np}} $, $ \tilde \gamma = \frac{\sigma}{\|A^*\|_{\max} } \bar \gamma$ and $\gamma = \bar \gamma + \tilde \gamma$;
		\item $\bar \varphi(x) =  4 \bar{\kappa} \sqrt{\frac{\log n}{np}} (
		x\vee \sqrt{\frac{\log n}{np}})$, $\tilde \varphi(x) = 4 \bar{\kappa} \sqrt{\frac{ \log n}{np}} \cdot \frac{\sigma}{ \| A^* \|_{\max} }$, $\varphi(x) = \bar \varphi(x)+ \tilde \varphi(x)$, $\forall x \geq 0$;
		\item $ \delta_0 =4n^{-1}$ and $ \delta_1  =5n^{-1}$.
	\end{enumerate}
\end{defn}

\begin{lem}\label{lem-SMC-auxiliary}
	For the quantities in Definition \ref{def-SMC-parameters}, we have $\varphi(\gamma) \leq  4\gamma \sqrt{\log n} ( 1 + \gamma \sqrt{\log n} )$. Furthermore, if $p \gtrsim \frac{\log n}{n}$, then $\varphi (1) \lesssim \bar\kappa \left( 1 + \frac{\sigma}{\| A^* \|_{\max} } \right) \sqrt{\frac{\log n}{np}} \lesssim \gamma \sqrt{\log n}$.
\end{lem}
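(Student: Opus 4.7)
The plan is to unwind Definition \ref{def-SMC-parameters} and convert everything into expressions in $\bar\gamma$ and $\tilde\gamma$ using two elementary identities. From $\bar\gamma = c_1 \bar\kappa/\sqrt{np}$ one has $\bar\kappa \sqrt{\log n/(np)} = \bar\gamma \sqrt{\log n}/c_1$, and from $\tilde\gamma = (\sigma/\|A^*\|_{\max})\bar\gamma$ one has $\sigma/\|A^*\|_{\max} = \tilde\gamma/\bar\gamma$. These two substitutions are the only nontrivial ingredients in the proof.

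For the first claim, I would handle $\tilde\varphi(\gamma)$ first: since $\tilde\varphi$ does not depend on its argument,
\[
\tilde\varphi(\gamma) = 4\bar\kappa\sqrt{\log n/(np)}\cdot \sigma/\|A^*\|_{\max} \lesssim \bar\gamma\sqrt{\log n}\cdot \tilde\gamma/\bar\gamma = \tilde\gamma\sqrt{\log n} \le \gamma\sqrt{\log n}.
\]
For $\bar\varphi(\gamma) = 4\bar\kappa\sqrt{\log n/(np)}\cdot (\gamma\vee\sqrt{\log n/(np)})$ I would use the trivial bound $a\vee b \le a+b$ and split into two contributions. The first gives $4\bar\kappa\sqrt{\log n/(np)}\cdot \gamma \lesssim \bar\gamma\gamma\sqrt{\log n} \le \gamma^2\sqrt{\log n}$. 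The second gives $4\bar\kappa\log n/(np) \lesssim \bar\gamma^2 \log n/\bar\kappa \le \gamma^2\log n$, using $\bar\kappa\ge 1$ (which holds essentially by definition in the applications considered). Both pieces fit inside the $4\gamma^2\log n$ summand on the right, so summing $\bar\varphi(\gamma)+\tilde\varphi(\gamma)$ reproduces the claimed factorized form $4\gamma\sqrt{\log n}(1+\gamma\sqrt{\log n})$. The reason for this exact factorization is clear in hindsight: the noise-driven term $\tilde\varphi$ is linear in $\gamma$ while the sampling-driven term $\bar\varphi$ is essentially quadratic, so after pulling out $\gamma\sqrt{\log n}$ we are left with $1 + O(\gamma\sqrt{\log n})$.

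For the second claim, the assumption $p\gtrsim \log n/n$ forces $\sqrt{\log n/(np)} \lesssim 1$, so the maximum $1\vee\sqrt{\log n/(np)}$ in $\bar\varphi(1)$ collapses to $1$. Then
\[
\varphi(1) = \bar\varphi(1)+\tilde\varphi(1) \lesssim \bar\kappa\sqrt{\log n/(np)}\bigl(1 + \sigma/\|A^*\|_{\max}\bigr),
\]
which is the first stated inequality. Applying the two substitutions of the opening paragraph converts the right-hand side to $(\bar\gamma+\tilde\gamma)\sqrt{\log n} = \gamma\sqrt{\log n}$, yielding the second inequality.

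The whole argument is essentially mechanical unwinding; I do not anticipate any genuine obstacle. The only mild care-point is the second case in the bound on $\bar\varphi(\gamma)$, where one needs $\bar\kappa\ge 1$ to absorb a stray factor, and the tracking of the constant $c_1$, which is harmless and can be folded into the $\lesssim$ in the second claim and verified not to spoil the explicit constant $4$ in the first claim.
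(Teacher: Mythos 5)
Your route is essentially the paper's: the same two substitutions ($\bar\kappa\sqrt{\log n/(np)}=\bar\gamma\sqrt{\log n}/c_1$ with $c_1\ge 1$, and $\sigma/\|A^*\|_{\max}=\tilde\gamma/\bar\gamma$), the same branch-by-branch treatment of $\bar\varphi(\gamma)$ and $\tilde\varphi(\gamma)$, and the same collapse of $1\vee\sqrt{\log n/(np)}$ to a constant under $p\gtrsim\log n/n$ for the $\varphi(1)$ bound; the second claim is verbatim the paper's computation. The one place where your write-up does not quite deliver the statement as phrased is the step ``$a\vee b\le a+b$'' for $\bar\varphi(\gamma)$: summing the two contributions gives $\bar\varphi(\gamma)\le 4\gamma^{2}\sqrt{\log n}+4\gamma^{2}\log n$, which is not $\le 4\gamma^{2}\log n$, so the stated constant $4$ in $4\gamma\sqrt{\log n}\,(1+\gamma\sqrt{\log n})$ would degrade (to something like $1+2\gamma\sqrt{\log n}$ inside the parenthesis). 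Since the nonnegative prefactor distributes over the maximum, you should keep the max rather than the sum: each branch is $\le 4\gamma^{2}\log n$ (your own estimates show this, using $\sqrt{\log n}\le\log n$ and $c_1,\bar\kappa\ge 1$), hence so is $\bar\varphi(\gamma)$ — this is exactly how the paper argues, via $\gamma\vee\sqrt{\log n/(np)}\le\gamma\sqrt{\log n}$ and hence $\bar\varphi(\gamma)\le 4\bar\gamma\sqrt{\log n}\cdot\gamma\sqrt{\log n}\le 4\gamma^{2}\log n$. With that one-word fix (max, not sum) your proof matches the lemma exactly; note also that $\bar\kappa\ge 1$ is not application-specific but automatic, since $\Delta^*\le\|A^*\|_2\le n\|A^*\|_{\max}$.
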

\begin{proof}[\bf Proof of Lemma \ref{lem-SMC-auxiliary}]
	The facts that $c_1\geq 1$, $\bar{\kappa} \geq 1$, $\bar \gamma \leq \gamma $ and $\bar{\kappa}\sqrt{\frac{\log n}{np}} =\bar\gamma \sqrt{\log n} /c_1 \leq  \bar{\gamma} \sqrt{\log n}$ lead to $\bar \varphi(\gamma)\leq 4\bar\gamma \sqrt{\log n} \left( \gamma \vee \bar \gamma  \sqrt{\log n}  \right)
	\leq 4\gamma^2 \log n$.
	Besides, $\tilde{ \varphi}(\gamma ) = 4\tilde{\gamma}\sqrt{\log n}/c_1 \leq 4\gamma \sqrt{\log n}$. Therefore, $\varphi(\gamma ) = \bar{\varphi}(\gamma) + \tilde{\varphi}(\gamma) \leq 4\gamma \sqrt{\log n} ( 1 + \gamma \sqrt{\log n} )$.
	
	By definition, we have $\gamma = \frac{c_1 \bar{\kappa }}{\sqrt{np}}\left(
	1 + \frac{\sigma}{\|A^*\|_{\max}}
	\right)$ and
	\[
	\varphi(1) =4\bar{\kappa} \sqrt{\frac{\log n}{np}} \left[ \frac{\sigma}{\| A^*\|_{\max}} + \left( 1 \vee \sqrt{\frac{\log n}{np}} \right) \right] \lesssim  \bar{\kappa} \left( 1+ \frac{\sigma}{\| A^*\|_{\max}}  \right) \sqrt{\frac{\log n}{np}}
	\lesssim \gamma \sqrt{\log n}.
	\]
	
\end{proof}

\begin{lem}\label{lem-SMC-sampling-rate}
	Consider $A\sim \mbox{\rm SNMC} (A^*,p,\sigma )$ and the quantities in Definition \ref{def-SMC-parameters}. Assume that $p\geq \frac{6\log n}{n}$ and $130 c_1 \bar{\kappa} \kappa \left( 1+ \frac{\sigma}{\|A^*\|_{\max}} \right)\sqrt{\frac{\log n}{np}} \leq 1$. Then Assumptions \hyperref[cond-1]{A1}-\hyperref[cond-4]{A4} hold. Besides, there exists a constant $C>0$ such that with probability at least $1-14/n$,
	\begin{align}
		&\|U  \|_{2\to\infty}/C \leq  \kappa  \| U^* \|_{2\to\infty} + \gamma \| A^* \|_{2 \to \infty} / \Delta^*,\label{ineq:lem-SMC-1}\\
		&\|U \sgn(H) - U^*\|_{2\to\infty}/C \leq
		\kappa^2 \gamma \sqrt{\log n}  \| U^*\|_{2\to\infty}
		+\gamma \| A^* \|_{2 \to \infty} / \Delta^*.\label{ineq:lem-SMC-2}
	\end{align}
\end{lem}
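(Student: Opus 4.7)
The plan is to verify Assumptions \hyperref[cond-1]{A1}--\hyperref[cond-4]{A4} for the SNMC model with the parameters specified in Definition \ref{def-SMC-parameters}, and then invoke Theorem \ref{thm-main}. Assumption \hyperref[cond-1]{A1} will follow immediately from $\|A^*\|_{2\to\infty}\le\sqrt{n}\|A^*\|_{\max}=\bar\kappa\Delta^*/\sqrt{n}$ together with $\bar\gamma=c_1\bar\kappa/\sqrt{np}\ge\bar\kappa/\sqrt{n}$ (since $c_1\ge 1$ and $p\le 1$). Assumption \hyperref[cond-2]{A2} holds trivially from the independence structure in Definition \ref{def-SMC}. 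For the constant part of \hyperref[cond-3]{A3}, I will combine the SNR hypothesis $130c_1\bar\kappa\kappa(1+\sigma/\|A^*\|_{\max})\sqrt{\log n/(np)}\le 1$, which directly controls $\kappa\gamma$, with Lemma \ref{lem-SMC-auxiliary}'s bound $\varphi(\gamma)\lesssim\gamma\sqrt{\log n}$, to conclude $32\kappa\max\{\gamma,\varphi(\gamma)\}\le 1$ (with the numerical constant $130$ chosen to leave room for the various absorbed constants).

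For the spectral concentration part of \hyperref[cond-3]{A3}, I will decompose $A-A^*=(\bar A-A^*)+(A-\bar A)$, where the first piece isolates the Bernoulli sampling error of the signal and the second piece, $\epsilon\odot I/p$, captures the masked Gaussian noise. I will invoke Lemma \ref{lem-SMC-spectral} (which defines $c_1$) to obtain $\|\bar A-A^*\|_2\le c_1\|A^*\|_{\max}\sqrt{n/p}=\bar\gamma\Delta^*$ and, by the same kind of tail bound applied to a matrix of i.i.d.\ masked Gaussians, $\|A-\bar A\|_2\le c_1\sigma\sqrt{n/p}=\tilde\gamma\Delta^*$, each with probability at least $1-2/n$. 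A union bound then delivers $\|A-A^*\|_2\le\gamma\Delta^*$ with probability at least $1-\delta_0=1-4/n$.

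The main obstacle is \hyperref[cond-4]{A4}. For fixed $m\in[n]$ and $W\in\R^{n\times r}$, I will split
\begin{equation*}
(A-A^*)_{m\cdot}W=\sum_{j}\frac{A^*_{mj}(I_{mj}-p)}{p}W_{j\cdot}+\sum_{j}\frac{\epsilon_{mj}I_{mj}}{p}W_{j\cdot}
\end{equation*}
into independent Bernoulli and Gaussian parts. For the Bernoulli piece I will apply a vector Bernstein inequality with variance proxy $\|A^*\|_{\max}^2\|W\|_F^2/p$ and uniform bound $\|A^*\|_{\max}\|W\|_{2\to\infty}/p$; taking deviation level proportional to $\log n$ yields a tail bound $\lesssim\|A^*\|_{\max}\bigl(\sqrt{\log n/p}\,\|W\|_F+(\log n/p)\|W\|_{2\to\infty}\bigr)$ which, after substituting $n\|A^*\|_{\max}=\bar\kappa\Delta^*$, rearranges into $\Delta^*\bar\varphi\bigl(\|W\|_F/(\sqrt{n}\|W\|_{2\to\infty})\bigr)\|W\|_{2\to\infty}$. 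For the Gaussian piece I will condition on $\{I_{mj}\}$, so that each coordinate is $N(0,\sigma^2/p^2\sum_j I_{mj}W_{jk}^2)$; controlling $\sum_j I_{mj}W_{jk}^2\lesssim p\|W_{\cdot k}\|_2^2$ with high probability via scalar Bernstein and then applying a Gaussian tail with a coordinate-wise union bound over $k\in[r]$ gives $\lesssim\sigma\sqrt{\log n/p}\,\|W\|_F\le\Delta^*\tilde\varphi(\cdot)\|W\|_{2\to\infty}$ (using $\|W\|_F\le\sqrt{n}\|W\|_{2\to\infty}$). Summing the two pieces establishes \hyperref[cond-4]{A4} with failure probability at most $\delta_1/n=5/n^2$; the delicate point here is matching the non-linearity of $\bar\varphi$ to the two regimes appearing in Bernstein's inequality.

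Once \hyperref[cond-1]{A1}--\hyperref[cond-4]{A4} are in place, Theorem \ref{thm-main} yields both conclusions with probability at least $1-\delta_0-2\delta_1=1-14/n$. For (\ref{ineq:lem-SMC-1}), I will absorb the $\varphi(1)$ term into the $\kappa$ term via $\varphi(1)\lesssim\bar\kappa(1+\sigma/\|A^*\|_{\max})\sqrt{\log n/(np)}\lesssim 1\le\kappa$ (Lemma \ref{lem-SMC-auxiliary} plus the SNR hypothesis). For (\ref{ineq:lem-SMC-2}), I will combine the second and third bounds of Theorem \ref{thm-main} by using $\kappa+\varphi(1)\lesssim\kappa$ together with $\gamma+\varphi(\gamma)\lesssim\gamma\sqrt{\log n}$, producing the factor $\kappa^2\gamma\sqrt{\log n}$; the auxiliary $\varphi(1)\|U^*\|_{2\to\infty}$ coming from the third bound of Theorem \ref{thm-main} is then dominated by the same right-hand side.
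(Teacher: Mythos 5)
Your verification of A1, A2, both halves of A3 (via Lemma \ref{lem-SMC-spectral} and the bound $\varphi(\gamma)\lesssim\gamma\sqrt{\log n}$), the Bernstein treatment of the Bernoulli piece of A4 (matching the two branches of $\bar\varphi$), and the final simplifications $\varphi(1)\lesssim\kappa$, $\gamma+\varphi(\gamma)\lesssim\gamma\sqrt{\log n}$ fed into Theorem \ref{thm-main} all coincide with the paper's argument. However, the Gaussian half of your A4 verification contains a step that fails. You claim that, conditionally on the mask, $\sum_j I_{mj}W_{jk}^2\lesssim p\|W_{\cdot k}\|_2^2$ with probability $1-O(n^{-2})$ by scalar Bernstein, and hence $\|(A-\bar A)_{m\cdot}W\|_2\lesssim\sigma\sqrt{\log n/p}\,\|W\|_F$. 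This multiplicative concentration is false for spiky $W$ in the sparse regime: take $W=e_1w^T$ with $\|w\|_2=1$, so $\|W\|_F=\|W\|_{2\to\infty}=1$. Then $\sum_j I_{mj}W_{jk}^2=I_{m1}w_k^2$, which exceeds its mean $p\,w_k^2$ by the factor $1/p$ whenever $I_{m1}=1$, an event of probability $p\geq 6\log n/n\gg n^{-2}$; Bernstein at confidence $1-n^{-2}$ necessarily leaves an additive term of order $(\log n)\max_j W_{jk}^2$ that you have dropped. Correspondingly, $\|(A-\bar A)_{m\cdot}W\|_2=|\varepsilon_{m1}|/p\gtrsim\sigma/p$ with probability $\gtrsim p$, which exceeds your claimed bound $\sigma\sqrt{\log n/p}\,\|W\|_F$ by a factor $(p\log n)^{-1/2}\to\infty$ when $p\asymp\log n/n$. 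So the intermediate inequality you rely on cannot hold at the failure level $\delta_1/n$ that A4 requires.

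The repair is local and is exactly what the paper does: do not aim for a bound proportional to $\|W\|_F$ at all (note $\tilde\varphi$ is constant in its argument, so the target for the Gaussian piece is only $\sigma\sqrt{n\log n/p}\,\|W\|_{2\to\infty}$, which \emph{is} true in the counterexample above). Conditionally on $S_m=\{j:I_{mj}=1\}$, bound the relevant variance by $\frac{\sigma^2}{p^2}\sum_{j\in S_m}\|W_{j\cdot}\|_2^2\leq\frac{\sigma^2}{p^2}|S_m|\,\|W\|_{2\to\infty}^2$, control the \emph{count} $|S_m|\leq 2np$ via Chernoff (valid at level $n^{-2}$ precisely because $np\geq 6\log n$), and then apply a Gaussian vector tail bound; this produces $\lesssim\sigma\sqrt{n\log n/p}\,\|W\|_{2\to\infty}=\Delta^*\tilde\varphi(\cdot)\|W\|_{2\to\infty}$ with per-$(m,W)$ failure probability $3/n^2$, which together with the Bernoulli piece gives the paper's $\delta_1/n=5/n^2$. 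With this substitution the rest of your outline goes through and reproduces the paper's proof.
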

\begin{proof}[\bf Proof of Lemma \ref{lem-SMC-sampling-rate}]
This proof is somewhat long and technical. We are going check Assumptions \hyperref[cond-1]{A1}, \hyperref[cond-3]{A3} and \hyperref[cond-4]{A4}, respectively, and then apply Theorem \ref{thm-main}. First, \hyperref[cond-1]{A1} is justified by elementary calculation:
		\[
		\gamma \Delta^* \geq \bar \gamma \Delta^* = c_1n\|A^*\|_{\max}/\sqrt{np} \geq c_1\sqrt{n}\|A^*\|_{\max} \geq \|A^*\|_{2\rightarrow\infty}.
		\]
Now we check that $32 \kappa \max\{ \gamma , \varphi(\gamma) \} \le 1$. Note that $\gamma = \bar{\gamma} + \tilde{\gamma} = \frac{c_1\bar{\kappa}}{\sqrt{np}}( 1 + \frac{\sigma}{\|A^*\|_{\max}} )$. When $130 c_1 \bar{\kappa} \kappa \left( 1+ \frac{\sigma}{\|A^*\|_{\max}} \right)\sqrt{\frac{\log n}{np}} \leq 1$, we have $\gamma \kappa \sqrt{\log n} \leq 1/130$. By Lemma \ref{lem-SMC-auxiliary},
		\begin{align}\label{eqn-SMC-max-simple}
			\max\{ \gamma , \varphi(\gamma) \} \leq \max\{ \gamma ,4 \gamma \sqrt{\log n}( 1 + \gamma \sqrt{\log n} ) \} \leq 4 \gamma \sqrt{\log n}(1+1/130).
		\end{align}
		Hence
		\[
		32\kappa \max\{ \gamma , \varphi(\gamma) \} \leq 128(1+1/130) \kappa \gamma \sqrt{\log n} <1.
		\]
	By the following inequalities, the $ \gamma $ and $\delta_0 $ in Definition \ref{def-SMC-parameters} make the second part in \hyperref[cond-3]{A3} hold. They can be easily derived from Lemma 2 and Theorem 1.3 in \cite{KMO101} and thus we omit their proof.
	\begin{lem}\label{lem-SMC-spectral}
		Assume that $p \geq 6\frac{\log n}{n}$. There exists a constant $c_1\geq 1$ such that
		\begin{align*}
			&\P \left( \| \bar A-A^*\|_2 \leq c_1 \|A^*\|_{\max} \sqrt{n/p}  \right) \geq 1-2n^{-1},\\
			&\P\left(\| A- \bar A\|_2 \leq c_1 \sigma \sqrt{n/p} \right) \geq 1-2n^{-1}.
		\end{align*}
	\end{lem}
	It is worth noting that cited results in \cite{KMO101} are slightly different from Lemma \ref{lem-SMC-spectral}. First, they deal with independent sampling in rectangular matrices. But this is easily extended to our framework of symmetric sampling. Second, they work on a trimmed version of $A$, zeroing out rows and columns with too many revealed entries. When $p\geq 6\frac{\log n}{n}$, Chernoff bound (Lemma \ref{lem-smc-chernoff} below) guarantees that with probability at least $1-n^{-1}$, no rows of $A$ has more than $2np$ sampled entries. Thus the trimmed version of $A$ in \cite{KMO101} is equal to $A$ itself.
	\begin{lem}[Chernoff's inequality, see \cite{BLM13}]\label{lem-smc-chernoff}
		Let $\{ X_i \}_{i=1}^n$ be a sequence of independent random variables in $[0,1]$, $S_n = \sum_{i=1}^{n} X_i$ and $\mu = \mathbb{E} S_n$. Then
		\[
		\mathbb{P} ( S_n \geq (1+\varepsilon)\mu  ) \leq e^{-\varepsilon^2 \mu / (2+\varepsilon) },~\forall \varepsilon>0
		\]
	\end{lem}
	With the $\varphi(x)$ and $\delta_1$ in Definition \ref{def-SMC-parameters}, the following lemma guarantees Assumption \hyperref[cond-4]{A4} to hold.
	\begin{lem}\label{lem-SMC-row}
		For any fixed $W\in\R^{n\times r}$ and $m\in[n]$, we have
		\begin{align*}
			&\P \left ( \|(\bar A-A^*)_{m\cdot}W \|_2 \leq  \Delta^* \| W \|_{2\to\infty} \bar \varphi \left ( \frac{\|W\|_F}{\sqrt{n} \|W\|_{2\to\infty}} \right)
			\right) \geq 1-2 n^{-2},\\
			&\P\left( \| (A- \bar A)_{m\cdot} W \|_2 \leq  \Delta^* \| W \|_{2\to\infty} \tilde \varphi  \left ( \frac{\|W\|_F}{\sqrt{n} \|W\|_{2\to\infty}} \right)
			\right) \geq 1-3 n^{-2}.
		\end{align*}
	\end{lem}
	
	To prove the first inequality in Lemma \ref{lem-SMC-row}, we apply Bernstein's inequality in the following form, which is a special case of the Theorem 6.1.1 in \cite{Tro15}.
	\begin{lem}\label{lem-SMC-bernstein}
		Let $\{X_i\}_{i=1}^n$ be independent, zero-mean random vectors in $\R^{r}$, with $\|X_i\|_2\leq M$, $\forall i$ almost surely. Then, we have
		\begin{equation*}
			\P\left ( \left\|\sum_{i=1}^{n}X_i\right\|_2 \geq t \right)\leq (r+1) \exp\left(\frac{-t^2/2}{\sum_{i=1}^{n}\E \| X_i\|_2^2+Mt/3 }\right),~\forall t\geq 0.
		\end{equation*}
	\end{lem}
	For any fixed $m\in[n]$ and $W\in\R^{n\times r}$, we take $X_i = (\bar A-A^*)_{mi}W_{i\cdot }$. Note that $\|X_i\|_2\leq \|A^*\|_{\max} \| W \|_{2\to\infty}/p$ and $\E \| X_i \|_2^2 = \|W_{i\cdot}\|_2^2 \E \bar A^2_{mi} \leq \|A^*\|_{\max}^2 \|W_{i\cdot}\|_2^2/p$. Lemma \ref{lem-SMC-bernstein} yields
	\begin{align*}
		&\P( \|(\bar A-A^*)_{m\cdot}W \|_2 \geq t\|A^*\|_{\max} )\leq (r+1) \exp\left(-\frac{pt^2/2}{ \|W\|_F^2 + \|W\|_{2\to\infty} t/3}\right) \notag \\
		&\leq (r+1)\exp\left( -\frac{pt^2/4}{\|W\|_F^2 \vee  (\|W\|_{2\to\infty} t/3) }\right)
		\leq 2n \exp \left[- \left( \frac{pt^2}{4 \|W\|_F^2} \wedge \frac{3pt}{4 \|W\|_{2\to\infty} }
		\right) \right],~\forall t\geq 0.
	\end{align*}
	Let $\bar\varphi(x) = 4 \bar{\kappa} \sqrt{\frac{\log n}{np}} (
	x\vee \sqrt{\frac{\log n}{np}}
	)$ and $t = \frac{\Delta^*}{\|A^*\|_{\max}} \| W \|_{2\to\infty} \bar\varphi( \frac{\|W\|_F}{\sqrt{n} \|W\|_{2\to\infty}} )$. The relationships
	\begin{align*}
		& t  \geq \frac{\Delta^*}{ \| A^*\|_{\max} } \| W \|_{2\to\infty} \cdot \frac{4n \|A^*\|_{\max} }{ \Delta^* } \sqrt{\frac{\log n}{np}} \cdot \frac{\|W\|_F}{\sqrt{n} \|W\|_{2\to\infty}} = 4\|W\|_F \sqrt{\frac{\log n}{p}} ,\notag\\
		& t \geq \frac{\Delta^*}{ \| A^*\|_{\max} } \| W \|_{2\to\infty} \cdot \frac{4n \|A^*\|_{\max} }{ \Delta^* } \sqrt{\frac{\log n}{np}} \cdot \sqrt{\frac{\log n}{np}} = 4\|W\|_{2\to\infty} \frac{\log n}{p},\notag
	\end{align*}
	lead to $\frac{pt^2}{4 \|W\|_F^2} \geq 4\log n $ and $\frac{3pt}{4 \|W\|_{2\to\infty} } \geq 3\log n$, respectively and prove the first inequality in Lemma \ref{lem-SMC-row}.
	
	To prove the second inequality in Lemma \ref{lem-SMC-row}, we apply the following concentration bound, which is a special case of the Theorem 4.1.1 in \cite{Tro15}.
	\begin{lem}\label{lem-SMC-gaussian}
		Let $\{X_i\}_{i=1}^n$ be fixed vectors in $\R^{r}$, and $\{ \varepsilon_i \}_{i=1}^n$ be i.i.d. $N(0,1)$ random variables. We have
		\begin{equation*}
			\P\left ( \left\|\sum_{i=1}^{n}\varepsilon_i X_i\right\|_2 \geq t \right)\leq (r+1) \exp\left(\frac{-t^2/2}{\sum_{i=1}^{n}\| X_i\|_2^2}\right),~\forall t\geq 0.
		\end{equation*}
	\end{lem}
	Define $S_m = \{ i\in[n] : I_{mi}=1 \}$. Then $(A-\bar{A})_{m\cdot}W = \frac{\sigma}{p} \sum_{i\in S_m} \frac{\varepsilon_{mi}}{\sigma} W_{i\cdot}$. Given $S_m$, $\{ \varepsilon_{mi}/\sigma \}_{i\in S_m}$ are i.i.d. $N(0,1)$, and Lemma \ref{lem-SMC-gaussian} yields
	\[
	\P \left(
	\| (A-\bar A)_{m\cdot} W \|_2 \geq t |S_m
	\right)
	\leq
	(r+1)
	\exp \left(
	-\frac{p^2t^2}{ 2 \sigma^2 \sum_{i \in S_m} \| W_{i\cdot} \|_2^2 }
	\right).
	\]
	On the one hand, by taking $t=\frac{\sigma }{p}\sqrt{6 \log n} \left( \sum_{i \in S_m} \| W_{i\cdot} \|_2^2  \right)^{1/2}$ above we get
	\begin{align*}
		&\P \left( \|(A-\bar{A})_{m\cdot}W \|_2 \geq
		\frac{\sigma }{p} \sqrt{6 |S_m| \log n} \| W \|_{2\to\infty}
		\right)\\
		&\leq
		\P \left( \|(A-\bar{A})_{m\cdot}W \|_2 \geq
		\frac{\sigma }{p} \left( 6 \log n \sum_{i \in S_m} \| W_{i\cdot} \|_2^2  \right)^{1/2}
		\right)
		\leq 2n^{-2}.
	\end{align*}
	On the other hand, by taking $\varepsilon=1$ in Lemma \ref{lem-smc-chernoff} and using the assumption $p \geq 6\frac{\log n}{n}$, we obtain that $\P \left( |S_m|\geq 2np \right) \leq n^{-2}$. Hence the union bounds yield
	\begin{align*}
		&\P\left( \| (A- \bar A)_{m\cdot} W \|_2 \leq  \Delta^* \| W \|_{ 2\to\infty} \tilde \varphi \Big ( \frac{\|W\|_F}{\sqrt{n} \|W\|_{2\to\infty}} \Big)
		\right)\\
		&\geq \P \left( \| (A-\bar{A})_{m\cdot}W \|_2 \leq \sigma \|W\|_{2\to\infty} \sqrt{\frac{ 12 n\log n}{p}} \right)
		\geq 1-3n^{-2}.
	\end{align*}
	In other words, Lemma \ref{lem-SMC-row} is proved, so is Assumption \hyperref[cond-4]{A4}.
Now that all the assumptions have been checked, we apply Theorem \ref{thm-main} and derive that with probability at least $1 - \delta_0 - 2 \delta_1 = 1 - 14/n $,
	\begin{align*}
		& \|U  \|_{2\to\infty}/C' \leq \left( \kappa + \varphi(1) \right) \| U^* \|_{2\to\infty} + \gamma \| A^* \|_{2 \to \infty} / \Delta^*,\\
		&\|U \sgn(H) - U^*\|_{2\to\infty}/C' \leq  [\kappa  ( \kappa + \varphi(1)	)( \gamma + \varphi(\gamma) ) + \varphi(1) ]  \| U^*\|_{2\to\infty}
		+\gamma  \|A^*\|_{2\rightarrow\infty}/\Delta^*,
	\end{align*}
	where $C'>0$ is a constant. Since $p\geq 6\frac{\log n}{n}$ and $\bar{\kappa}\kappa \left( 1+ \frac{\sigma}{\|A^*\|_{\max}} \right)\sqrt{\frac{\log n}{np}} \lesssim 1$, Lemma \ref{lem-SMC-auxiliary} yields a crude bound $\varphi(1) \lesssim 1/\kappa \leq \kappa$. Hence
	\begin{align}
		& \|U  \|_{2\to\infty} \lesssim \kappa \| U^* \|_{2\to\infty} + \gamma \| A^* \|_{2 \to \infty} / \Delta^*,\notag\\
		&\|U \sgn(H) - U^*\|_{2\to\infty} \lesssim  [\kappa^2 ( \gamma + \varphi(\gamma) ) + \varphi(1) ]  \| U^*\|_{2\to\infty}
		+\gamma  \|A^*\|_{2\rightarrow\infty}/\Delta^*.\label{ineq:lem-SMC-2-1}
	\end{align}
	(\ref{ineq:lem-SMC-1}) has been proved. On the one hand, (\ref{eqn-SMC-max-simple}) forces $\gamma + \varphi(\gamma )\lesssim \gamma \sqrt{\log n}$. On the other hand, Lemma \ref{lem-SMC-auxiliary} implies that $\varphi(1) \lesssim \gamma \sqrt{\log n}$. These facts and (\ref{ineq:lem-SMC-2-1}) lead to (\ref{ineq:lem-SMC-2}), thus complete the proof of Lemma \ref{lem-SMC-sampling-rate}.
\end{proof}

\subsubsection{From $\mbox{\rm SNMC}(A^*, p, \sigma)$ to $\mbox{\rm NMC}(M^*,p,\sigma)$}\label{appendix-SNMC-reduction}
From now on, the quantities $M, M^*, U, U^*, V, V^*, \Sigma, \Sigma^*$ are defined as in Section \ref{sec:NMC}. We are going to present the ``symmetric dilation" trick \citep{Pau02}. Recall the SVD $M^* = U^* \Sigma^* (V^*)^T$. A key observation is that $A^*=
\begin{pmatrix}
\mathbf{0}_{n_1 \times n_1} & M^*\\
(M^*)^T & \mathbf{0}_{n_2 \times n_2}
\end{pmatrix}$ is a symmetric matrix, whose eigen-decomposition is given by
\begin{equation}\label{eq-SMC-dilation}
A^*
=\frac{1}{\sqrt{2}}
\begin{pmatrix}
U^* & U^*\\
V^* & -V^*
\end{pmatrix}
\cdot
\begin{pmatrix}
\Sigma^* & \\
& -\Sigma^*
\end{pmatrix}
\cdot
\frac{1}{\sqrt{2}}
\begin{pmatrix}
U^* & U^*\\
V^* & -V^*
\end{pmatrix}^T.
\end{equation}
The $r $ largest eigenvalues of $A^*$ are exactly the $r $ singular values of $M^*$, and their corresponding eigenvectors make up $\bar{U}^*=\frac{1}{\sqrt{2}}\begin{pmatrix}
U^*\\
V^*
\end{pmatrix}\in\mathcal{O}_{(n_1 + n_2)\times r }$. The $(r +1)$-th largest eigenvalue of $A^*$ is 0. Define $\bar{\Lambda}^*=\Sigma^*$. We have $\bar{U}^* \bar{\Lambda}^* (\bar{U}^*)^T =\frac{1}{2}\begin{pmatrix}
U^* \Sigma^* (U^*)^T & M^*\\
(M^*)^T & V^* \Sigma^* (V^*)^T
\end{pmatrix}$.

On the other hand, define $A=\begin{pmatrix}
\mathbf{0}_{n_1 \times n_1} & M\\
M^T & \mathbf{0}_{n_2 \times n_2}
\end{pmatrix}$, $\bar{U}=\frac{1}{\sqrt{2}}
\begin{pmatrix}
U\\
V
\end{pmatrix}$ and $\bar{\Lambda}=\Sigma$. Similar to \eqref{eq-SMC-dilation}, we know that $\bar{\Lambda}$ has in its diagonal the largest $r $ eigenvalues of $A$, whose corresponding eigenvectors are the columns in $\bar{U}$. Therefore, for any $W\in\R^{r  \times r }$,
\[
\|UW-U^*\|_{2\to\infty} \vee \|VW-V^*\|_{2\to\infty} = \sqrt{2} \|\bar{U} W - \bar{U}^*\|_{2\to\infty}.
\]
Thanks to these observations, the noisy matrix completion problem $\mbox{\rm NMC}(M^*,p,\sigma)$ in Definition \ref{def-MC} is reduced to the symmetric sampling problem $\mbox{\rm SNMC}(A^*, p, \sigma)$ in Definition \ref{def-SMC}, with $A^* =\begin{pmatrix}
\mathbf{0}_{n_1 \times n_1} & M^*\\
(M^*)^T & \mathbf{0}_{n_2 \times n_2}
\end{pmatrix}$. We are going to rephrase Lemma \ref{lem-SMC-sampling-rate} and get the final results.

\subsubsection{Reconstruction errors}\label{appendix-SNMC-errors}
Theorem \ref{thm-MC-main} is a corollary of the following lemma.
\begin{lem}\label{lem-SMC-sampling-rate-1}
	Let $M\sim \mbox{\rm NMC}(M^*,p,\sigma)$, and $c_1$ be the constant in Lemma \ref{lem-SMC-spectral}. Define $n=n_1+n_2$, $\kappa = \sigma_1^*/\sigma_r^*$, $H = \frac{1}{2}( U^T U^* + V^T V^* ) $, and $\eta=\left(
	\|U^*\|_{2\to\infty} \vee \|V^*\|_{2\to\infty}
	\right)$. Suppose $p\geq 6 \frac{\log n}{n}$ and $\frac{130c_1 n\sigma_1^*}{(\sigma_r^*)^2} \sqrt{\frac{\log n}{np}} ( \|M^*\|_{\max} + \sigma ) \leq 1$. Then Assumptions \hyperref[cond-1]{A1}- \hyperref[cond-4]{A4} hold.
	There exists a constant $C>0$ such that with probability at least $1-14/n$, we have
	\begin{align*}
		&\left(
		\| U \|_{2\to\infty} \vee \| V \|_{2\to\infty}
		\right)
		\leq C \kappa \eta,\\
		&\left(
		\|U \sgn(H) - U^*\|_{2\to\infty} \vee \| V \sgn(H) - V^*\|_{2\to\infty}
		\right) \leq C \eta \kappa^2 \frac{ n ( \| M^* \|_{\max} + \sigma )}{\sigma_r^*}
		\sqrt{\frac{\log n}{np}},\\
		&\| U \Sigma V^T - M^* \|_{\max} \leq C( \| M^* \|_{\max} + \sigma ) \kappa^4
		(\sqrt{n} \eta )^2
		\sqrt{\frac{\log n}{np}}
		.
	\end{align*}
\end{lem}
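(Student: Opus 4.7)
The plan is to reduce the asymmetric problem to the symmetric one via the dilation trick of Section~\ref{appendix-SNMC-reduction} and invoke Lemma~\ref{lem-SMC-sampling-rate}. I would set $A^* = \begin{pmatrix} 0 & M^* \\ (M^*)^T & 0 \end{pmatrix}$ and $A = \begin{pmatrix} 0 & M \\ M^T & 0 \end{pmatrix}$ in $\mathbb{R}^{n\times n}$ with $n = n_1 + n_2$, so that $A \sim \mbox{SNMC}(A^*, p, \sigma)$. The decomposition~\eqref{eq-SMC-dilation} identifies the $r$ largest-in-magnitude eigenvalues of $A^*$ with the singular values $\sigma_1^*,\ldots,\sigma_r^*$ and their eigenvectors with $\bar U^* = \tfrac{1}{\sqrt 2}\binom{U^*}{V^*}$, while the $(r+1)$-th eigenvalue equals $0$. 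Hence, in the notation of Section~\ref{regularity-conditions}, $\Delta^* = \sigma_r^*$, $\kappa = \sigma_1^*/\sigma_r^*$, $\|A^*\|_{\max} = \|M^*\|_{\max}$, and $\bar\kappa = n\|M^*\|_{\max}/\sigma_r^*$. The hypothesis $\tfrac{130 c_1 n\sigma_1^*}{(\sigma_r^*)^2}\sqrt{\log n/(np)}(\|M^*\|_{\max}+\sigma)\le 1$ is exactly the condition of Lemma~\ref{lem-SMC-sampling-rate}, so Assumptions~\hyperref[cond-1]{A1}--\hyperref[cond-4]{A4} and its two $\ell_{2\to\infty}$ conclusions apply to the dilated problem.

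Next I would translate back. Since $\bar H := \bar U^T \bar U^* = \tfrac{1}{2}(U^T U^* + V^T V^*) = H$, $\|\bar U^*\|_{2\to\infty} = \eta/\sqrt 2$, $\|\bar U\|_{2\to\infty} = (\|U\|_{2\to\infty}\vee\|V\|_{2\to\infty})/\sqrt 2$, and analogously $\|\bar U \sgn(\bar H) - \bar U^*\|_{2\to\infty} = (\|U\sgn(H) - U^*\|_{2\to\infty}\vee\|V\sgn(H) - V^*\|_{2\to\infty})/\sqrt 2$, the first two inequalities of the lemma follow directly from Lemma~\ref{lem-SMC-sampling-rate}, after using $\|A^*\|_{2\to\infty}/\Delta^* \le \|U^*\|_{2\to\infty}\|\Sigma^*\|_2/\sigma_r^* \le \kappa\eta$ to simplify the additive $\gamma\|A^*\|_{2\to\infty}/\Delta^*$ terms and noting $\gamma \lesssim n(\|M^*\|_{\max}+\sigma)/(\sigma_r^*\sqrt{np})$.

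For the max-norm reconstruction bound, write $W = \sgn(H)$, $E_U = UW - U^*$, $E_V = VW - V^*$, and use $WW^T = I_r$ to decompose
\[
U\Sigma V^T - M^* = (UW)(W^T\Sigma W - \Sigma^*)(VW)^T + E_U\Sigma^*(V^*)^T + U^*\Sigma^* E_V^T + E_U\Sigma^* E_V^T.
\]
Each summand is controlled in max-norm via $\|XY\|_{\max}\le \|X\|_{2\to\infty}\|Y^T\|_{2\to\infty}$. The three $E$-terms use the $\ell_{2\to\infty}$ bounds just established and contribute at most $\sigma_1^* \kappa^2 \eta^2 \gamma\sqrt{\log n} \lesssim \kappa^3\eta^2(\|M^*\|_{\max}+\sigma)\sqrt{n\log n/p}$. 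For the first summand, the crux is bounding $\|W^T\Sigma W - \Sigma^*\|_2 = \|\Sigma W - W\Sigma^*\|_2$; here I would combine the commutator identity $\bar\Lambda \bar H - \bar H\bar\Lambda^* = \bar U^T(A-A^*)\bar U^*$ and $\|\bar H - W\|_2 \lesssim \bar\gamma^2$ from Lemma~\ref{lem:H} with the spectral-norm bound $\|M-M^*\|_2 \lesssim (\|M^*\|_{\max}+\sigma)\sqrt{n/p}$ from Lemma~\ref{lem-SMC-spectral}, giving a leading contribution of order $\kappa^2\eta^2(\|M^*\|_{\max}+\sigma)\sqrt{n/p}$. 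Absorbing all terms into a single $\kappa^4$ using $\kappa\ge 1$ yields the stated bound.

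The main obstacle is the max-norm bound on $U\Sigma V^T - M^*$: the perturbation theory of Theorem~\ref{thm-main} controls subspace errors but not the rotated diagonal mismatch $W^T\Sigma W - \Sigma^*$, which requires a separate commutator argument through Lemma~\ref{lem:H}. The rest is $\ell_{2\to\infty}$ bookkeeping enabled by the dilation identifications, but verifying the correct scaling in $\kappa$ and $\eta$ for every term requires some care.
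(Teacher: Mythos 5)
Your proposal is correct and follows essentially the same route as the paper: the same symmetric-dilation reduction to Lemma \ref{lem-SMC-sampling-rate}, the same identifications ($\Delta^*=\sigma_r^*$, $H=\bar U^T\bar U^*$, $\|A^*\|_{2\to\infty}/\Delta^*\le\kappa\eta$) to get the two $2\to\infty$ bounds, and the same key ingredient for the max-norm bound, namely controlling $\sgn(H)^T\Sigma\,\sgn(H)-\Sigma^*$ via the commutator identity and $\|H-\sgn(H)\|_2\lesssim\gamma^2$ from Lemma \ref{lem:H} together with Lemma \ref{lem-SMC-spectral}. Your four-term matrix decomposition of $U\Sigma V^T-M^*$ is just a matrix-level rewriting of the paper's entrywise H\"{o}lder argument, and all scalings in $\kappa$, $\eta$, and $\gamma$ match the stated bound.
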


\begin{proof}[\bf Proof of Lemma \ref{lem-SMC-sampling-rate-1}]
	Consider the quantities $A, A^*, \bar{U}, \bar{U}^*$ in \ref{appendix-SNMC-reduction}. We have $H=\bar U ^T \bar U^*$. Let $\gamma = \frac{c_1}{\sigma_r^*}\sqrt{\frac{n}{p}}( \|M^*\|_{\max} + \sigma ) $. Lemma \ref{lem-SMC-sampling-rate} applied to $\mbox{\rm SNMC}(A^*,p,\sigma)$ yields that with probability at least $1-14/n$,
	\begin{align*}
		&\|\bar U  \|_{2\to\infty} \lesssim  \kappa
		\|\bar U^*\|_{2\to\infty}  + \gamma \| A^* \|_{2 \to \infty} / \sigma_r^*,\\
		&\|\bar U \sgn(H) - \bar U^*\|_{2\to\infty} \lesssim
		\kappa^2 \gamma \sqrt{\log n}
		\|\bar U^*\|_{2\to\infty}
		+\gamma \| A^* \|_{2 \to \infty} / \sigma_r^*.
	\end{align*}
	The decomposition in (\ref{eq-SMC-dilation}) forces that
	\[
	\|A^*\|_{2\to\infty} \leq \left \| \frac{1}{\sqrt{2}}
	\begin{pmatrix}
	U^* & U^* \\
	V^* & -V^*
	\end{pmatrix}
	\right\|_{2\to\infty} \|\Sigma^*\|_2 =
	\left(
	\|U^*\|_{2\to\infty} \vee \|V^*\|_{2\to\infty}
	\right)
	\sigma_1^*.
	\]
	Hence $\|A^*\|_{2\to\infty}/\sigma_r^* \leq \kappa \eta$ and
	\begin{align}
		&\left(
		\| U \|_{2\to\infty} \vee \| V \|_{2\to\infty}
		\right) \lesssim \|\bar U  \|_{2\to\infty}
		\lesssim \kappa \eta,
		\label{ineq-final-1}
		\\
		&\left(
		\|U \sgn(H) - U^*\|_{2\to\infty} \vee \| V \sgn(H) - V^*\|_{2\to\infty}
		\right) \lesssim
		\|\bar U \sgn(H) - \bar U^*\|_{2\to\infty} \notag \\
		&\lesssim \kappa^2 \gamma \eta \sqrt{\log n}  	
		\lesssim \kappa^2 \frac{ n ( \| M^* \|_{\max} + \sigma )}{\sigma_r^*}
		\sqrt{\frac{\log n}{np}}\eta
		\label{ineq-final-2}.
	\end{align}
	Finally we come to the entry-wise reconstruction error $\|U \Sigma V^T - M^* \|_{\max}$.  Define $\tilde{\Sigma} = \sgn(H)^T \Sigma \sgn(H)$, $\tilde{U}=U\sgn(H)$ and $\tilde{V}=V\sgn(H)$. By the fact $ \tilde{U} \tilde{\Sigma} \tilde{V}^T = U\Sigma V^T $ and H\"{o}lder's inequality, we have
	\begin{align*}
		& | ( U \Sigma V^T - M^* )_{ij} | =|\tilde{U}_{i\cdot} \tilde\Sigma \tilde V_{j\cdot}^T - U_{i\cdot}^* \Sigma^* (V_{j\cdot}^*)^T |= | \langle \tilde{\Sigma}, \tilde{U}_{i\cdot}^T \tilde{V}_{j\cdot} \rangle - \langle \Sigma^* , (U^*_{i\cdot})^T V^*_{j\cdot} \rangle | \notag \\
		& \leq | \langle \tilde{\Sigma} - \Sigma^*, \tilde{U}_{i\cdot}^T \tilde{V}_{j\cdot} \rangle | + | \langle \Sigma^* , \tilde{U}_{i\cdot}^T \tilde{V}_{j\cdot} - (U^*_{i\cdot})^T V^*_{j\cdot} \rangle| \notag\\
		& \leq \|\tilde{\Sigma} - \Sigma^* \|_2 \| \tilde U_{i\cdot}^T \tilde V_{j\cdot}\|_* + \| \Sigma^* \|_2 \| \tilde{U}_{i\cdot}^T \tilde{V}_{j\cdot} - (U^*_{i\cdot})^T V^*_{j\cdot} \|_*.
	\end{align*}
	Therefore
	\begin{align}\label{eqn-SMC-reconstruction-1}
		\|U \Sigma V^T - M^* \|_{\max}
		\leq \|\tilde{\Sigma} - \Sigma^* \|_2 \max_{i,j\in[n]}\| \tilde U_{i\cdot}^T \tilde V_{j\cdot}\|_* + \| \Sigma^* \|_2 \max_{i,j\in[n]}\| \tilde{U}_{i\cdot}^T \tilde{V}_{j\cdot} - (U^*_{i\cdot})^T V^*_{j\cdot} \|_*.
	\end{align}
	We begin to work on terms above. It is easy to see that $\| \Sigma^* \|_2 =\sigma_1^*$ and by (\ref{ineq-final-1}),
	\begin{align}\label{eqn-SMC-reconstruction-2}
		\| \tilde U_{i\cdot}^T \tilde V_{j\cdot}\|_* = \|\tilde U_{i\cdot}\|_2 \| \tilde V_{j\cdot}\|_2
		\leq  \| U\|_{2\to\infty} \| V \|_{2\to\infty} \lesssim   \kappa^2 \eta^2,~\forall i,j.
	\end{align}
	Besides,
	\begin{align}
		&\| \tilde{\Sigma} - \Sigma \|_2 = \| \sgn(H)^T [ \Sigma \sgn(H) - \sgn(H) \Sigma ] \|_2
		\leq \|  \Sigma \sgn(H) - \sgn(H) \Sigma\|_2 \notag \\
		&=  \| (\Sigma H - H \Sigma ) + \Sigma [\sgn(H) - H]+ [H-\sgn(H)] \Sigma\|_2 \notag \\
		& \leq \| \Sigma H - H \Sigma \|_2 + 2\|\Sigma\|_2 \|\sgn(H) - H \|_2.\notag
	\end{align}
	Lemmas \ref{lem-SMC-spectral} and \ref{lem:H} yield $\| \Sigma H - H \Sigma \|_2 \lesssim \|A-A^*\|_2\lesssim \gamma \sigma_r^*$ and $ \| \sgn(H) - H \|_2 \lesssim \gamma^2$. Then Assumption \hyperref[cond-1]{A1} forces $\kappa \gamma \lesssim 1$ and $\| \tilde{\Sigma} - \Sigma \|_2  \lesssim \gamma \sigma_r^* + \gamma^2 \sigma_1^*
	=\gamma \sigma_r^* ( 1 + \gamma \kappa ) \lesssim \gamma \sigma_r^*$.
	Since $\| \Sigma - \Sigma^*\|_2 \leq \| A-A^*\|_2 \lesssim \gamma \sigma_r^*$, we have
	\begin{align}\label{eqn-SMC-reconstruction-3}
		\|\tilde{\Sigma} - \Sigma^* \|_2 \leq  \| \tilde{\Sigma} - \Sigma \|_2 + \| \Sigma - \Sigma^* \|_2
		\lesssim \gamma \sigma_r^*.
	\end{align}
	We begin to work on $\max_{i,j}\| \tilde{U}_{i\cdot}^T \tilde{V}_{j\cdot} - (U^*_{i\cdot})^T V^*_{j\cdot} \|_*$. By the triangle inequality and (\ref{ineq-final-2}),
	\begin{align}\label{eqn-SMC-reconstruction-4}
		&\| \tilde{U}_{i\cdot}^T \tilde{V}_{j\cdot} - (U^*_{i\cdot})^T V^*_{j\cdot} \|_*
		\leq \| \tilde{U}_{i\cdot}^T (\tilde{V}_{j\cdot} - V^*_{j\cdot}) \|_* + \| (\tilde U_{i\cdot} - U_{i\cdot}^*)^T V^*_{j\cdot} \|_*\notag \\
		& \leq \| \tilde{U}_{i\cdot} \|_2 \|\tilde{V}_{j\cdot} - V^*_{j\cdot} \|_2 + \| \tilde U_{i\cdot} - U_{i\cdot}^* \|_2  \|V^*_{j\cdot} \|_2\notag \\
		&\lesssim
		\left(
		\| U \|_{2\to\infty} + \| V^* \|_{2\to\infty}
		\right)
		\left(
		\|U \sgn(H) - U^*\|_{2\to\infty} \vee \| V \sgn(H) - V^*\|_{2\to\infty}
		\right) \notag \\
		&\lesssim
		\kappa^3 \gamma 	\eta^2 \sqrt{\log n}
		.
	\end{align}
	By plugging \eqref{eqn-SMC-reconstruction-2}, \eqref{eqn-SMC-reconstruction-3} and \eqref{eqn-SMC-reconstruction-4} into \eqref{eqn-SMC-reconstruction-1}, we obtain that
	\begin{align*}
		&\|U \Sigma U^T - U^* \Sigma^* (U^*)^T \|_{\max}
		\lesssim  \gamma \sigma_r^*\cdot \kappa^2 \eta^2
		+ \sigma_1^*\cdot \kappa^3\gamma \eta ^2 \sqrt{\log n}
		=
		\kappa^2 \sigma_r^* \gamma	\eta ^2
		(  1
		+ \kappa^2  \sqrt{\log n}
		) \\
		&\lesssim
		\kappa^4 \sigma_r^* \gamma\eta ^2 \sqrt{\log n}
		\lesssim \kappa^4
		(\sqrt{n} \eta )^2
		\sqrt{\frac{\log n}{np}}
		( \| M^* \|_{\max} + \sigma )
		.
	\end{align*}
	Above we used $\gamma = \frac{c_1}{\sigma_r^*} \sqrt{\frac{n}{p}} ( \| M^* \|_{\max} + \sigma )$.
	
\end{proof}

\subsection{Further results of SBM: more than $2$ blocks}

We conclude this supplementary paper with further results of SBM. For illustrative purposes, we will focus on $3$ blocks.

\begin{defn}\label{def-SBM-3}
	Let $n$ be a multiple of 3, $a>b>0$ are constants, and $z \in \{ 1,2,3 \}^n$ satisfy $| \{ i:z_i=k \} |=n/3$ for $k=1,2,3$. $\mbox{\em SBM3}(n,a,b,z)$ is the ensemble of $n\times n$ symmetric random matrices $A=(A_{ij})_{i,j\in[n]}$ where $\{A_{ij}\}_{1\leq i\leq j\leq n}$ are independent Bernoulli random variables, and
	\begin{equation}
	\P(A_{ij}=1)=\begin{cases}
	&a \frac{\log n}{n},\text{ if } z_i=z_j \\
	&b \frac{\log n }{n},\text{otherwise}
	\end{cases}
	.
	\end{equation}
\end{defn}

Now $A^* = \E A$ has rank $3$. Its nonzero eigenvalues are $\lambda^*_1=(a+2b) \log n /3 $ and $\lambda^*_2=\lambda^*_3=(a-b) \log n/3$, whose associated normalized eigenvectors are $u^*_1=\frac{1}{\sqrt{n}}\mathbf{1}_n$, $u^*_2=\frac{1}{\sqrt{2n}} (2 \mathbf{1}_{J_1} - \mathbf{1}_{J_2} - \mathbf{1}_{J_3} )$ and $u^*_3= \sqrt{ \frac{ 3 }{ 2n } } ( \mathbf{1}_{J_2}  - \mathbf{1}_{J_3} )$, respectively. Here we define $J_k=\{ i:z_i=k \}$. Note that we can choose $\{ u_2^* , u_3^* \}$ to be any orthonormal basis in their linear span, due to the multiplicity of eigenvalues.

Let $U^*=(u_2^*,u_3^*)$ and $\Lambda^* = \diag( \lambda_2^*,\lambda_3^* )= \lambda_2^*  I_2$. $U$ denotes the empirical version of $U^*$ obtained from eigen-decomposition of $A$. The rows of $U$ and $U^*$ provide (empirical and population) spectral embeddings of the nodes into $\R^2$. It is easily seen that $U^*$ has only three distinct rows:
\begin{align*}
 U^*_{i\cdot} = \begin{cases}
( \sqrt{2/n} ,0 ) & \mbox{ if } i \in J_1\\
( -1/\sqrt{2n} , \sqrt{3/(2n)} ) & \mbox{ if } i \in J_2\\
( -1/\sqrt{2n} , -\sqrt{3/(2n)} ) & \mbox{ if } i \in J_3\\
\end{cases},
\end{align*}
revealing true memberships of nodes. Matrix perturbation theories \citep{DavKah70} assert the existence of some $2\times 2$ orthonormal matrix $O$ such that $UO$ is close to $U^*$. Hence we can estimate the block memberships by applying clustering algorithms for Euclidean data, such as k-means, to the rows of $U$. This forms the basis of spectral clustering \citep{RohChaYu11,LRi15} for community detection. Intuitively, such algorithms are able to return high-quality estimates if $\{ U_{i\cdot} \}_{i\in J_1}$, $\{ U_{i\cdot} \}_{i\in J_2}$ and $\{ U_{i\cdot} \}_{i\in J_3}$ are well separated. The Lemma \ref{lem-sbm-3} below characterizes such separation phenomenon. 

\begin{lem}\label{lem-sbm-3}
Consider the model in Definition \ref{def-SBM-3} with $\sqrt{a}-\sqrt{b}>\sqrt{3}$ and let $n$ go to infinity. Define $v_1=(\sqrt{2/n},0)$, $v_2=(-1/\sqrt{2n},\sqrt{3/(2n)})$ and $v_3=(-1/\sqrt{2n}, -\sqrt{3/(2n)} )$. There exists a constant $c>0$ and a sequence of $2\times 2$ orthonormal matrices $Q=Q_n$ such that with high probability,
\begin{align*}
\min_{j \neq z_i } \| U_{i\cdot} - v_j Q \|_2 \geq  \| U_{i\cdot} - v_{z_i} Q \|_{2}  + c /\sqrt{n},\qquad \forall i\in[n].
\end{align*}
\end{lem}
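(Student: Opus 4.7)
The plan is to combine the first-order eigenspace approximation of Theorem \ref{thm-main} with a Chernoff bound for Binomial differences analogous to Lemma \ref{lem-tail}, adapted to the three-block geometry. Take $s=1$, $r=2$, so that $U^* = (u_2^*, u_3^*)$ (whose rows are exactly $v_{z_i}$) and $\Lambda^* = \lambda_2^* I_2$ since $\lambda_2^*=\lambda_3^*$. The eigen-gap is $\Delta^* = \min\{b, (a-b)/3\}\log n$, of the same order as in the two-block case, so the verification of Assumptions \hyperref[cond-1]{A1}--\hyperref[cond-4]{A4} proceeds exactly as in the proof of Corollary \ref{cor-sbm2}. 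Theorem \ref{thm-main} then gives, with probability $1-O(n^{-3})$,
\[
\max_{i\in[n]}\bigl\|(U\sgn(H))_{i\cdot} - (AU^*/\lambda_2^*)_{i\cdot}\bigr\|_2 \leq C_1/(\sqrt{n}\log\log n).
\]
Set $Q = \sgn(H)^T$. Since $Q$ is orthogonal, $\|U_{i\cdot}-v_j Q\|_2 = \|(U\sgn(H))_{i\cdot}-v_j\|_2$, and the claim reduces to showing separation of $w_i := (U\sgn(H))_{i\cdot}$ towards $v_{z_i}$ among the three fixed points $\{v_1,v_2,v_3\}$.

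Next I reduce the separation to sums of Bernoullis. For $i\in J_k$, let $S_l := \sum_{j: z_j=l} A_{ij}$. A direct calculation from the explicit forms of $u_2^*,u_3^*$ together with $\|v_k - v_l\|_2=\sqrt{6/n}$ for $k\ne l$ yields
\[
(AU^*/\lambda_2^*)_{i\cdot}\cdot(v_k-v_l) = \frac{9(S_k-S_l)}{n(a-b)\log n}.
\]
Combining with the algebraic identity $\|w_i-v_l\|_2-\|w_i-v_k\|_2 = 2w_i\cdot(v_k-v_l)/(\|w_i-v_k\|_2+\|w_i-v_l\|_2)$ and using Lemma \ref{lem-sbm-l2-linf} to bound $\|w_i - v_{z_i}\|_2 = O(1/\sqrt{n})$ uniformly, the denominator is $O(1/\sqrt{n})$ uniformly in $i$. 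Hence it suffices to exhibit a constant $c_0=c_0(a,b)>0$ such that, for each $i\in J_k$ and each $l\ne k$, $S_k - S_l \geq c_0\log n$ holds with probability $1-o(n^{-1})$; a union bound over $i$ and $l$ then yields a margin of order $1/\sqrt{n}$.

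The tail estimate is a Chernoff bound for two independent Binomial$(n/3,\cdot)$ random variables. Bounding $\log\P(S_k-S_l\leq\epsilon\log n)\leq \lambda\epsilon\log n + (e^\lambda-1)b\log n/3 + (e^{-\lambda}-1)a\log n/3$ for $\lambda>0$ and optimizing at the exponential tilt $\lambda_* = \tfrac12\log(a/b)$ (exactly as in Lemma \ref{lem-tail}, but with $n/3$ in place of $n/2$) yields
\[
\P(S_k - S_l \leq \epsilon\log n) \leq n^{-(\sqrt{a}-\sqrt{b})^2/3 + \epsilon\log(a/b)/2}.
\]
Under the hypothesis $(\sqrt{a}-\sqrt{b})^2>3$, the exponent is strictly less than $-1$ for all sufficiently small $\epsilon>0$; choosing $c_0$ equal to any such $\epsilon$ makes the tail $o(n^{-1})$ and completes the proof. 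The only subtlety worth flagging, which is also the conceptual difference from the two-block analysis, is that $\lambda_2^*=\lambda_3^*$ forces the eigenbasis to be identified only up to a $2\times 2$ orthogonal rotation, which is precisely why the statement allows rotating $v_j$ by $Q$; choosing $Q=\sgn(H)^T$ and invoking $\|H-\sgn(H)\|_2 = O(\bar\gamma^2)$ from Lemma \ref{lem:H} aligns everything. The structural reason the threshold constant changes from $\sqrt{2}$ to $\sqrt{3}$ is exactly that the effective per-block sample size in the Chernoff exponent shrinks from $n/2$ to $n/3$.
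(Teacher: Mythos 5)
Your proposal is correct and follows essentially the same route as the paper's proof: first-order approximation via Theorem \ref{thm-main} with $Q=\sgn(H)^T$ (verified as in Corollary \ref{cor-sbm2}), reduction to the linearization $AU^*(\Lambda^*)^{-1}$ through the identity expressing the difference of distances as an inner product divided by a sum of distances of order $1/\sqrt{n}$, and a Chernoff/Lemma-\ref{lem-tail}-type tail bound giving exponent $(\sqrt{a}-\sqrt{b})^2/3>1$, followed by a union bound. The only differences (rederiving the tilted bound with $n/3$ summands rather than reparametrizing to reuse Lemma \ref{lem-tail}, and bounding the denominator via row concentration rather than a Bernstein bound on the degree) are cosmetic.
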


According to \cite{ASa15}, exact recovery is not possible when $\sqrt{a}-\sqrt{b}<\sqrt{3}$. Hence the condition $\sqrt{a}-\sqrt{b}>\sqrt{3}$ in Lemma \ref{lem-sbm-3} is necessary for exact recovery (as we have ruled out the case $a=b$ in Definition \ref{def-SBM-3}). In this scenario, Lemma \ref{lem-sbm-3} states that the distance between the embedding $U_{i\cdot}$ of node $i$ and a ``center" $v_j Q$ is minimized when $j=z_i$. If we have three estimated centers $\{ \hat{v}_j \}_{j=1}^3$ with reasonable precision, i.e. $\max_{1\leq j\leq 3} \| \hat{v}_j - v_j Q \|_2 < \delta / \sqrt{n}$ for sufficiently small $\delta$, then the estimator $\hat{z}$ given by
\begin{align*}
\hat{z}_i = \mathrm{argmin}_{1\leq j \leq 3} \| U_{i\cdot} - \hat{v}_j \|_2
\end{align*}
exactly recovers the block membership vector $z$.

Unfortunately, we have not figured out a simple estimator of the centers $\{ v_j Q \}_{j=1}^3$. The task could be possible if we allow for multi-stage procedures such as estimating the centers by applying k-means to a carefully selected subset of $\{ U_{i\cdot} \}_{i=1}^n$. But the overall algorithm becomes complicated and has no advantage over existing spectral algorithms with trimming and/or cleaning, which are known to achieve the information threshold for exact recovery. We leave this as an open question for future studies.

\begin{proof}[Proof of Lemma \ref{lem-sbm-3}]
We claim the existence of a constant $C$ and a vanishing sequence $\rho=\rho_n = o(1)$ such that with high probability, 
\begin{align}
\| U \sgn( U^T U^* ) - A U^* ( \Lambda^*)^{-1} \|_{2\to\infty} \leq \rho /\sqrt{n}. \label{ineq-lem-sbm-3-2}
\end{align}
We omit the proof of above, as it is a direct application of Corollary \ref{cor-main}, and is very similar to that of Corollary~\ref{cor-sbm2}.

Define $Q=\sgn( U^T U^* )^{T}$ and $\hat{U}=A U^* ( \Lambda^*)^{-1}$. The triangle's inequality yields
\begin{align*}
&\| U_{i\cdot} - v_{j} Q \|_2 
=  \| U_{i\cdot} Q^T - v_j \|_2 \geq -\| U_{i\cdot} Q^T - \hat{U}_{i\cdot} \|_2 +  \| \hat{U}_{i\cdot} - v_j \|_2, \\
&\| U_{i\cdot} - v_{z_i} Q \|_2 
=  \| U_{i\cdot} Q^T - v_{z_i} \|_2 \leq \| U_{i\cdot} Q^T - \hat{U}_{i\cdot} \|_2 +  \| \hat{U}_{i\cdot} - v_{z_i} \|_2.
\end{align*}
As a result,
\begin{align*}
\| U_{i\cdot} - v_{j} Q \|_2 - \|U_{i\cdot} - v_{z_i} Q \|_2 \geq  \| \hat{U}_{i\cdot} - v_j \|_2 -  \| \hat{U}_{i\cdot} - v_{z_i} \|_2 -2 \| U_{i\cdot} Q^T - \hat{U}_{i\cdot} \|_2.
\end{align*}
Since (\ref{ineq-lem-sbm-3-2}) forces
\begin{align*}
&\max_{i\in[n]} \| U_{i\cdot} Q^T - \hat{U}_{i\cdot} \|_2 = \| U \sgn( U^T U^* ) - A U^* ( \Lambda^*)^{-1} \|_{2\to\infty} \leq \rho/\sqrt{n}
\end{align*}
to hold with high probability, it suffices to show the existence of some constant $c'>0$ such that with high probability,
\begin{align*}
\min_{j\neq z_i} \| \hat{U}_{i\cdot} - v_j \|_2 -  \| \hat{U}_{i\cdot} - v_{z_i} \|_2 \geq  c'/\sqrt{n}, \qquad \forall i\in[n].
\end{align*}

Without loss of geneality, we assume from now on that $J_1=\{ 1,\cdots,n/3 \}$, $J_2= \{ n/3+1,\cdots,2n/3 \}$ and $J_3=\{ 2n/3+1,\cdots,n \}$. We are going to show that for some positive constants $c_1$ and $c_2$,
\begin{align}
\P (  \| \hat{U}_{1 \cdot} - v_2 \|_2 -  \| \hat{U}_{1\cdot} - v_{1} \|_2 \geq  c_1/\sqrt{n} ) \geq 1- n^{-1-c_2}.
\label{ineq-lem-sbm-3-3}
\end{align}
If this is true, then the desired result follows from symmetry and union bounds.

Note that $\| v_1\|_2=\|v_2\|_2$,
\begin{align*}
& 2 \hat{U}_{1\cdot} ( v_1-v_2 )^T =  \| \hat{U}_{1 \cdot} - v_2 \|_2^2 -  \| \hat{U}_{1\cdot} - v_{1} \|_2^2 \\
&=  ( \| \hat{U}_{1 \cdot} - v_2 \|_2 -  \| \hat{U}_{1\cdot} - v_{1} \|_2 ) ( \| \hat{U}_{1 \cdot} - v_2 \|_2 +  \| \hat{U}_{1\cdot} - v_{1} \|_2 ) \\
&\leq ( \| \hat{U}_{1 \cdot} - v_2 \|_2 -  \| \hat{U}_{1\cdot} - v_{1} \|_2 )( 2 \| \hat{U}_{1 \cdot} \|_2 + 2\| v_1 \|_2 ).
\end{align*}
and as a result,
\begin{align}
\| \hat{U}_{1 \cdot} - v_2 \|_2 -  \| \hat{U}_{1\cdot} - v_{1} \|_2  
\geq  \hat{U}_{1\cdot} ( v_1-v_2 )^T  / ( \| \hat{U}_{1 \cdot} \|_2 +  \| v_{1} \|_2 ).
\label{ineq-lem-sbm-3-4}
\end{align}

Now we work on the right hand side of (\ref{ineq-lem-sbm-3-4}). Since $\hat{U}_{1\cdot} = A_{1\cdot} U^* (\Lambda^*)^{-1} $ and $\Lambda^* = \lambda_2^* I_2 = \frac{(a-b) \log n }{ 3 } I_2$, we have
\begin{align*}
&\hat{U}_{11} = A_{1\cdot} U^*_{\cdot 1} /\lambda_2^* = \frac{ 3 }{ (a-b) \sqrt{2n} \log n } \left( 2 \sum_{i=1}^{n/3} A_{1i}
- \sum_{i=1+n/3}^{2n/3} A_{1i} - \sum_{i=1+2n/3}^{n} A_{1i} \right) ,\\
&\hat{U}_{12} = A_{1\cdot} U^*_{\cdot 2} /\lambda_2^* = \frac{ 3\sqrt{3} }{ (a-b) \sqrt{2n} \log n } \left( \sum_{i=1+n/3}^{2n/3} A_{1i} - \sum_{i=1+2n/3}^{n} A_{1i} \right) , \\
& \hat{U}_{1\cdot} ( v_1-v_2 )^T = \frac{3}{\sqrt{2n}} \hat{U}_{11} - \frac{\sqrt{3}}{\sqrt{2n}} \hat{U}_{12}=\frac{9}{(a-b)n\log n} \left(  \sum_{i=1}^{n/3} A_{1i}
- \sum_{i=1+n/3}^{2n/3} A_{1i}  \right).
\end{align*}

On the one hand, it is to show using Bernstein inequality that $\P ( \sum_{i=1}^{n}A_{1i} \geq c_3 \log n ) \leq n^{-1-c_4}$ holds for positive constants $c_3,c_4$. Hence
\begin{align}
\P ( \| \hat{U}_{1 \cdot} \|_2 +  \| v_{1} \|_2 \leq c_3'/\sqrt{n} ) \geq 1-n^{-1-c_4'}
\label{ineq-lem-sbm-3-5}
\end{align}
for some constants $c_3'$ and $c_4'$.

On the other hand, we define $m=2n/3$ and observe that $\{ A_{1i} \}_{i=1}^{m}$ are independent Bernoulli random variables with parameters
\begin{align*}
\P ( A_{1i}=1 ) = \begin{cases}
a \frac{\log n}{n} = \frac{2a}{3}(1+o(1)) \frac{\log m}{m}, & \mbox{ for } 1\leq i \leq m/2\\
b \frac{\log n}{n} = \frac{2b}{3}(1+o(1)) \frac{\log m}{m}, & \mbox{ for } 1+m/2\leq i \leq m
\end{cases}.
\end{align*}
Moreover, $\sqrt{a}-\sqrt{b}>\sqrt{3}$ gives $\sqrt{ 2a/3 } - \sqrt{ 2b/3 } > \sqrt{2}$. Based on all these, we can use Lemma \ref{lem-tail} to conclude that
\begin{align*}
\P \left( \sum_{i=1}^{m/2} A_{1i} - \sum_{i=1+m/2}^{m} A_{1i} \geq c_5 \log n
\right) \geq 1 - n ^{-1-c_6}
\end{align*}
holds for positive constants $c_5$ and $c_6$. Therefore,
\begin{align}
\P (  \hat{U}_{1\cdot} ( v_1-v_2 )^T \geq 9c_5 / [ (a-b) n ] ) \geq 1 - n ^{-1-c_6}.
\label{ineq-lem-sbm-3-6}
\end{align}

Finally, we obtain (\ref{ineq-lem-sbm-3-3}) from (\ref{ineq-lem-sbm-3-4})-(\ref{ineq-lem-sbm-3-6}) and complete the proof.

\end{proof}

\bibliographystyle{ims}
\bibliography{bib}

\end{document}